\newcommand{\R}{\mathbb{R}}
\newcommand{\mD}{\mathcal{D}}
\newcommand{\dd}{\, \text{d}}
\theoremstyle{plain}
\newtheorem{theorem}{Theorem}
\newtheorem{proposition}[theorem]{Proposition}
\newtheorem{lemma}[theorem]{Lemma}
\newtheorem{corollary}[theorem]{Corollary}
\theoremstyle{remark}
\newtheorem{remark}[theorem]{Remark}
\begin{document}
\title{The Effect of the Terminal Penalty in Receding Horizon Control for a Class of Stabilization Problems}\thanks{This project has received funding from the European Research Council (ERC) under the European Union’s Horizon 2020 research and innovation programme (grant agreement No 668998).}
\author{Karl Kunisch}\address{University of Graz, Austria and RICAM Institute, Linz, Austria.}
\author{Laurent Pfeiffer}\address{University of Graz, Austria.}
\date{November 2018}
\begin{abstract}
The Receding Horizon Control (RHC) strategy consists in replacing an infinite-horizon stabilization problem by a sequence of finite-horizon optimal control problems, which are numerically more tractable. The dynamic programming principle ensures that if the finite-horizon problems are formulated with the exact value function as a terminal penalty function, then the RHC method generates an optimal control. This article deals with the case where the terminal cost function is chosen as a cut-off Taylor approximation of the value function. The main result is an error rate estimate for the control generated by such a method, when compared with the optimal control. The obtained estimate is of the same order as the employed Taylor approximation and decreases at an exponential rate with respect to the prediction horizon. To illustrate the methodology, the article focuses on a class of bilinear optimal control problems in infinite-dimensional Hilbert spaces.
\end{abstract}
%
%
\subjclass{49J20, 49L20, 49Q12, 93D15.}
\keywords{receding horizon control, model predictive control, bilinear control, Riccati equation, value function.}
\maketitle

\section{Introduction}

In this article, we consider a bilinear optimal control problem of the following form:
\begin{equation}\label{eq:setup}
\begin{aligned}
& \inf_{u \in L^2(0,\infty)} \mathcal{J}(u,y_0) := \frac{1}{2}
\int_0^\infty \| C y(t) \|_Z^2 \dd t
+ \frac{\alpha}{2} \int_0^\infty u(t)^2 \dd t, \\
& \qquad \text{where: }
\left\{ \begin{array} {l} \displaystyle
\dot{y}(t)= Ay(t)+ (N y(t)+B) u(t), \quad
\text{for
} t>0 \\
y(0)=y_0 \in Y.
\end{array} \right.
\end{aligned}
\end{equation}
Here $V \subset Y \subset V^*$ is a Gelfand triple of real Hilbert spaces, where the embedding of $V$ into $Y$ is dense and compact, and $V^*$ denotes the topological dual of $V$. The operator
$A\colon \mD(A) \subset Y \rightarrow Y$ is the infinitesimal generator of an
analytic $C_0$-semigroup $e^{At}$ on $Y$, $B \in Y$, $C \in \mathcal{L}(Y,Z)$, $N \in \mathcal{L}(V,Y)$, $\alpha> 0$, and $\mD(A)$ denotes the domain of $A$.
The precise conditions on $A$, $B$, $C$, and $N$ are given further below. Under a detectability assumption on $(A,C)$, \eqref{eq:setup} is a stabilization problem, the goal being to steer $y$ to the origin, a steady state of the dynamical system (when $u=0$).
We denote by $\mathcal{V}$ the associated value function, i.e.\@ $\mathcal{V}(y_0)$ is the value of Problem \eqref{eq:setup} with initial condition $y_0$.

We aim at analyzing a specific receding horizon control (RHC) approach for solving \eqref{eq:setup}.
Before describing our contribution in some detail, let us briefly recapture some aspects of the receding horizon control methodology. In a nutshell, receding horizon control (also called model predictive control) consists in replacing the infinite-horizon control problem \eqref{eq:setup} by a sequence of finite-horizon problems with prediction horizon $T$.
At the beginning of  iteration $n$ of the method, a suboptimal control $u_{RH}$ and the associated trajectory $y_{RH}$ have been computed on the interval $(0,n\tau)$, where the sampling time $\tau >0$ is such that $\tau \leq T$.
The finite-horizon problem to be solved has the following form:
\begin{equation}\label{eq1.2}
\begin{aligned}
& \inf_{u \in L^2(0, T)} \frac{1}{2}
\int_{0}^{T} \| C y(t) \|_Z^2 \dd t
+ \frac{\alpha}{2} \int_{0}^{T} u(t)^2 \dd t + \phi(y(T)), \\
& \qquad \text{where: }
\left\{ \begin{array} {l} \displaystyle
\dot{y}(t)= Ay(t)+ (N y(t)+B) u(t), \quad
\text{for
} t \in (0,T) \\
y(0)= y_{RH}(n\tau) \in Y,
\end{array} \right.
\end{aligned}
\end{equation}
where $\phi$ denotes a terminal penalty function.
The control $u_{RH}$ and the trajectory $y_{RH}$ are extended on the interval $(n\tau,(n+1)\tau)$ by concatenation: $u_{RH}(n\tau +t)= u(t)$, $y_{RH}(n\tau + t)= y(t)$, for a.e. $t \in (0,\tau)$, where $u$ is a solution to \eqref{eq1.2} and $y$ is the associated trajectory.

The RHC method is receiving a tremendous amount of attention and it is frequently used in control engineering, in particular because problem \eqref{eq1.2} is easier to solve numerically than the infinite-horizon one. Another reason is that the method can be used as a feedback mechanism: the value of $u_{RH}$ on the interval $(n\tau,(n+1)\tau)$ is a function of $y_{RH}(n\tau)$, which implies that on that interval, the control mechanism can take into account possible perturbations having arisen before $n\tau$.

Let us review the main different choices which have been considered in the literature for the terminal penalty function $\phi$ involved in the finite-horizon problem, in the context of stabilization problems to a steady state.
Originally authors used to consider terminal state constraints. For instance, one can impose in \eqref{eq1.2} that $y(T)$ lies in a ball of small radius around the steady state, see e.g.\@ \cite{ABQRW, KG, K, MM, SMR}. In that case, $\phi$ is the indicator function of a neighborhood of the steady state.
As an alternative, terminal penalty functions called control Lyapunov functions have been used for guaranteeing the stability of the controlled system, see \cite{FK, IK1, JYH, PND} and the references cited there. It was observed later that for the stabilization of certain classes of dynamical systems, no terminal penalty function is necessary at all. This was proposed in \cite{JH} and further analyzed in e.g.\@ \cite{GMTT, RA}.
Let us point at some additional references from the large literature on receding horizon control. For finite-dimensional systems, we mention \cite{AZ, GP, MRRS}, for infinite-dimensional systems, we mention \cite{AK2, AK1, G}, and for discrete-time systems the articles \cite{GMTT, GR}.

The starting point of the present article is the following observation: If the value function associated with \eqref{eq:setup} is chosen as terminal penalty function in \eqref{eq1.2}, then, as a consequence of the dynamic programming principle, the control produced by the RHC method is optimal (for Problem \eqref{eq:setup}).
The question then arises how approximations to the value function of known order effect the approximation order of optimal receding horizon controls.
Taylor approximations are natural candidates for terminal penalty functions.
The Taylor approximation of order $k$ is denoted $\mathcal{V}_k(y_0)$ and it is of the form
\begin{equation*}
\mathcal{V}_k(y_0)= \sum_{j=2}^k \frac{1}{j!} \mathcal{T}_j(y_0,...,y_0),
\end{equation*}
where the mappings $\mathcal{T}_2$,$\mathcal{T}_3$,...,$\mathcal{T}_k$ are bounded multilinear forms of order $2$,$3$,...,$k$, respectively. The bilinear form $\mathcal{T}_2$ is given by $\mathcal{T}_2(y_0,y_0)= \langle y_0, \Pi y_0 \rangle$, where $\Pi \in \mathcal{L}(Y)$ is the unique nonnegative self-adjoint operator satisfying the following Riccati equation:
\begin{equation*}
\langle A^* \Pi y_1,y_2 \rangle + \langle \Pi A y_1,y_2 \rangle + \langle
Cy_1,Cy_2\rangle - \frac{1}{\alpha}  \langle B, \Pi y_1\rangle \langle B, \Pi y_2\rangle= 0, \quad  \text{for all } y_1,y_2\in\mathcal{D}(A).
\end{equation*}
Observe that $\mathcal{V}_2(y_0)= \frac{1}{2} \langle y_0, \Pi y_0 \rangle$ is the value function associated with \eqref{eq:setup} when $N=0$.
The other multilinear forms (of order 3 and more) are characterized as the unique solutions to generalized Lyapunov equations.
We refer to our article \cite{BreKP17a} for the derivation of these equations for an infinite-dimensional bilinear problem and to the survey \cite{KreAH13} for general finite-dimensional systems.
Taylor expansions have been mainly used in the literature for computing polynomial feedback laws. We refer to \cite{BreKP17c} for details concerning the practical computation of the Taylor expansions. In that paper, a Taylor expansion of order 5 is obtained for a control problem of the Fokker-Planck equation, with domains of dimension 1 and 2. We also refer to \cite{BreKP18a} for the suboptimality analysis of such feedback laws, in the context of infinite-dimensional bilinear problems.
In the context of RHC methods, the case of second-order Taylor approximations (for the terminal penalty function) has been often considered in the literature (see \cite{ABQRW,MRRS}).
To our knowledge, high-order Taylor expansions of the value function have only been used in the preprint \cite{Kre16}.

The aim of the article is to give a theoretical answer to the following question: Does a high-order approximation of the value function ensure that the RHC method generates a high-order approximation of the optimal control? We also investigate the effect of a large prediction horizon on the quality of the approximation.
Our main result is the following estimate:
\begin{equation} \label{eq:main_setup_1}
\| u_{RH} - \bar{u} \|_{L^2(0,\infty)} = \mathcal{O} \big( e^{-\lambda(T-\tau)-\lambda kT} \| y_0 \|_Y^k \big),
\end{equation}
where $\bar{u}$ is the solution to \eqref{eq:setup} with initial condition $y_0$ and $k \geq 2$ is the order of the Taylor approximation. The real number $\lambda$ is defined by $\lambda= - \sup_{\mu \in \sigma(A_\pi)} \text{Re}(\mu) > 0$ and $A_\pi= A- \frac{1}{\alpha} BB^* \Pi$.
Let us mention that our result is of local nature. For a given order $k$, the above estimate holds for values of $y_0$ in a neighborhood of $0$ and for a sampling time $\tau$ and a prediction horizon $T$ assumed to be sufficiently large. This local nature is mainly due to the fact that Taylor approximations are only valid in a neighborhood of the steady state.

In the last section of the article, we also consider the situation of quadratic terminal cost functions of the form: $\phi(y)= \frac{1}{2} \langle y, Qy \rangle_Y$, where $Q \in \mathcal{L}(Y)$ is symmetric and positive semi-definite. For this situation, we have the following estimate:
\begin{equation} \label{eq:main_setup_2}
\| u_{RH} - \bar{u} \|_{L^2(0,\infty)} = \mathcal{O} \big( e^{-\lambda(T-\tau)- \lambda T} \big( \| Q-\Pi \|_{\mathcal{L}(Y)} + e^{-\lambda T} \| y_0 \|_Y \big) \| y_0 \|_Y \big).
\end{equation}
Since $Q= 0$ is allowed, we cover the situation of a null terminal cost function.

The analysis is based on an estimation of the violation of the optimality conditions. More precisely, one can easily see that if $\phi= \mathcal{V}$ is replaced by $\phi= \mathcal{V}_k$ in Problem \eqref{eq1.2}, then only the terminal condition in the costate equation is modified in the corresponding optimality conditions. An error estimate for the control is then obtained by applying the inverse mapping theorem. This approach is quite common in the sensitivity analysis of optimization problems but it seems that it has never been applied before in the context of the RHC method.

The methodology which is presented in this article can be extended to other types of systems. In particular, in a finite-dimensional setting, the estimates \eqref{eq:main_setup_1} and \eqref{eq:main_setup_2} can be established if the non-linearity $Nyu$ is replaced by a more general term of the form $f(y,u)$, where $f$ is smooth and satisfies $f(0,0)= 0$ and $Df(0,0)=0$.
We have decided here to focus on bilinear systems, since they arise in the context of control of the Fokker-Planck equation, see \cite{BreKP17a,HarST13}. This also enables us to rely on some results obtained in \cite{BreKP18a}. The presented concepts can be applied to other nonlinear control systems, but they still require different adapted nonlinear PDE techniques.

The article is structured as follows.
In Section \ref{section:sectionFormulation} we state our main result. We also introduce the weighted spaces, which play an important role in our analysis. We recall in Section \ref{section:sensibility} some results concerning the dependence of the solution to \eqref{eq:setup} with respect to the initial condition $y_0$. Section \ref{section:finite-horizon} contains the core of our analysis. We estimate the violation of the optimality conditions and deduce an estimate for the solution to \eqref{eq1.2}. We finally obtain an estimate for the whole control generated by the RHC method in Section \ref{section:rhc_estimate}. The case of general quadratic cost functions is discussed in Section \ref{section:quad_tc}.

\section{Formulation of the problem, first properties, and main result} \label{section:sectionFormulation}

\subsection{Vector spaces}

For $T \in (0,\infty]$, we make use of the space $W(0,T) = \big\{ y \in L^2(0,T;V) \,|\, \dot{y} \in L^2 (0,T;V^*) \big\}$.
It is well-known that the space $W(0,T)$ is continuously embedded in $C_b([0,T],Y)$. We can therefore equip it with the following norm:
\begin{equation*}
\| y \|_{W(0,T)}
= \max \big( \| y \|_{L^2(0,T;V)}, \| \dot{y} \|_{L^2(0,T;V^*)}, \| y \|_{L^\infty(0,T;Y)} \big).
\end{equation*}

Let $\mu \in \R$ be given and let $T \in (0,\infty)$. Let us mention that the weighted spaces introduced here are only considered with a finite horizon $T$.
We denote by $L_{\mu}^2(0,T)$ the space of measurable functions $u \colon (0,T) \rightarrow \R$ such that
\begin{equation*}
\| u \|_{L_{\mu}^2(0,T)} := \| e^{\mu \cdot} u(\cdot) \|_{L^2(0,T)} =  \Big( \int_0^T \big( e^{\mu t} u(t) \big)^2 \dd t \Big)^{1/2} < \infty.
\end{equation*}
Observing that the mapping
$u \in L_{\mu}^2(0,T) \mapsto e^{\mu \cdot} u \in L^2(0,T)$
is an isometry, we deduce that $L_{\mu}^2(0,T)$ is a Banach space. Since $e^{\mu \cdot}$ is bounded from above and from below by a positive constant, we have that for all measurable $u \colon (0,T) \rightarrow \R$, $u \in L^2(0,T)$ if and only if $u \in L^2_\mu(0,T)$. The spaces $L^2(0,T)$ and $L^2_\mu(0,T)$ are therefore the same vector space, equipped with two different norms.
Similarly, we define the space $L_\mu^\infty(0,T;Y)$ of measurable mappings $y \colon (0,T) \rightarrow Y$ such that
\begin{equation*}
\| y \|_{L_\mu^\infty(0,T;Y)} := \| e^{\mu \cdot} y(\cdot) \|_{L^\infty(0,T;Y)} < \infty.
\end{equation*}
We finally define the Banach space $W_\mu(0,T)$ as the space of measurable mappings $y \colon (0,T) \rightarrow V$ such that $e^{\mu \cdot} y \in W(0,T)$. One can check that for all measurable mappings $y \colon (0,T) \rightarrow V$, $y \in W(0,T)$ if and only if $y \in W_\mu(0,T)$. The norm $\| \cdot \|_{W_{\mu}(0,T)}$ is defined by $\| y \|_{W_\mu(0,T)} = \| e^{\mu \cdot} y(\cdot) \|_{W_\mu(0,T)}$.

For $T \in (0,\infty)$ and $\mu \in \R$, we introduce the spaces
\begin{equation} \label{eq:spaceLambda}
\Lambda_{T,\mu}= W_{\mu}(0,T) \times L_{\mu}^2(0,T) \times W_{\mu}(0,T),
\end{equation}
that we equip with the norm
$\| (y,u,p) \|_{\Lambda_{T,\mu}} = \max \big( \| y \|_{W_\mu(0,T)}, \| u \|_{L_\mu^2(0,T)}, \| p \|_{W_\mu(0,T)} \big),
$
and
\begin{equation} \label{eq:spaceOmega}
\Upsilon_{T,\mu}= Y \times L_{\mu}^2(0,T;V^*) \times L_{\mu}^2(0,T;V^*) \times L_{\mu}^2(0,T) \times Y,
\end{equation}
that we equip with the following norm:
\begin{equation*}
\| (y_0,f,g,h,q) \|_{\Upsilon_{T,\mu}}
= \max \big( \| y_0 \|_Y, \| f \|_{L_{\mu}^2(0,T;V^*)}, \| g \|_{L_{\mu}^2(0,T;V^*)}, \| h \|_{L_{\mu}^2(0,T)}, e^{\mu T} \| q \|_{Y} \big).
\end{equation*}
Let us emphasize the fact that the component $q$ appears with a weight $e^{\mu T}$ in the above norm. The spaces $\Lambda_{T,0}$ and $\Lambda_{T,\mu}$ (resp.\@ $\Upsilon_{T,0}$ and $\Upsilon_{T,\mu}$) are the same vector space, equipped with two different norms. 
In the following lemma, the equivalence between these two norms is quantified (see \cite[Lemma 1.1]{BreP18} for a proof).

\begin{lemma} \label{lemma:embedding}
For all $\mu_0$ and $\mu_1$ with $\mu_0 \leq \mu_1$, there exists a constant $M>0$ such that for all $T \in (0,\infty)$ and for all $(y,u,p) \in \Lambda_{T,0}$,
\begin{align*}
\| (y,u,p) \|_{\Lambda_{T,\mu_0}} \leq \ & M \| (y,u,p) \|_{\Lambda_{T,\mu_1}}, \\
\| (y,u,p) \|_{\Lambda_{T,\mu_1}} \leq \ & M e^{(\mu_1-\mu_0)T} \| (y,u,p) \|_{\Lambda_{T,\mu_0}},
\end{align*}
and such that, similarly, for all $(y_0,f,g,h,q) \in \Upsilon_{T,0}$,
\begin{align*}
\| (y_0,f,g,h,q) \|_{\Upsilon_{T,\mu_0}} \leq \ & M \| (y_0,f,g,h,q) \|_{\Upsilon_{T,\mu_1}}, \\
\| (y_0,f,g,h,q) \|_{\Upsilon_{T,\mu_1}} \leq \ & M e^{(\mu_1-\mu_0)T} \| (y_0,f,g,h,q) \|_{\Upsilon_{T,\mu_0}}.
\end{align*}
\end{lemma}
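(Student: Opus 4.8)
The plan is to reduce the whole statement to one elementary fact about multiplication by the weight $e^{\gamma \cdot}$ on the bounded interval $(0,T)$, where $\gamma := \mu_1 - \mu_0 \geq 0$. Indeed, for a scalar function $u$ one has $\| u \|_{L^2_{\mu_1}(0,T)} = \| e^{\gamma \cdot}(e^{\mu_0 \cdot} u) \|_{L^2(0,T)}$, and the analogous identity holds for the $V^*$-valued $L^2$-norm, for the $L^\infty(0,T;Y)$-norm, and, through the defining isometry $y \mapsto e^{\mu \cdot} y$, for the $W(0,T)$-norm. So I would first dispose of the $L^2$- and $L^\infty$-type factors, then treat the $W_\mu(0,T)$-factors, and finally reassemble the product norms of $\Lambda_{T,\mu}$ and $\Upsilon_{T,\mu}$.

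For the $L^2$- and $L^\infty$-type factors I would use only that $1 \leq e^{\gamma t} \leq e^{\gamma T}$ for $t \in [0,T]$: multiplying pointwise and integrating (or passing to the essential supremum) gives, for every measurable $v$ with values in $\R$, $V^*$ or $Y$, the two-sided bound $\| v \|_{E_{\mu_0}} \leq \| v \|_{E_{\mu_1}} \leq e^{\gamma T} \| v \|_{E_{\mu_0}}$, with $E$ any of $L^2(0,T)$, $L^2(0,T;V^*)$, $L^\infty(0,T;Y)$. This already settles the middle component $u$ of $\Lambda_{T,\mu}$ and the components $f$, $g$, $h$ of $\Upsilon_{T,\mu}$; the component $y_0 \in Y$ carries no weight and contributes identically to both norms, while the component $q \in Y$ enters with the weight $e^{\mu T}$, so $e^{\mu_0 T} \| q \|_Y \leq e^{\mu_1 T} \| q \|_Y = e^{\gamma T}\, e^{\mu_0 T} \| q \|_Y$, which is exactly the required shape. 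Taking maxima over components then yields the two $\Upsilon_{T,\mu}$ inequalities with $M = 1$.

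For either $W_\mu(0,T)$-component of $\Lambda_{T,\mu}$ (say $y$) I would set $z := e^{\mu_0 \cdot} y \in W(0,T)$ and $w := e^{\gamma \cdot} z$, so that $\| y \|_{W_{\mu_1}(0,T)} = \| w \|_{W(0,T)}$ and $\| y \|_{W_{\mu_0}(0,T)} = \| z \|_{W(0,T)}$; the $L^2(0,T;V)$- and $L^\infty(0,T;Y)$-parts of $\| w \|_{W(0,T)}$ are again handled by the previous step. The delicate part is the time-derivative: from $\dot w = \gamma\, e^{\gamma \cdot} z + e^{\gamma \cdot} \dot z$ one gets $\| \dot w \|_{L^2(0,T;V^*)} \leq e^{\gamma T}(\gamma \kappa + 1) \| z \|_{W(0,T)}$, where the extra term $\gamma\, e^{\gamma \cdot} z$, living a priori only in $L^2(0,T;V)$, is estimated in $L^2(0,T;V^*)$ through the continuous embedding $V \hookrightarrow Y \hookrightarrow V^*$ with constant $\kappa$; symmetrically, writing $z = e^{-\gamma \cdot} w$ and using $e^{-\gamma t} \leq 1$ on $[0,T]$ gives $\| \dot z \|_{L^2(0,T;V^*)} \leq (\gamma \kappa + 1) \| w \|_{W(0,T)}$. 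Taking the maximum over the three components of $\Lambda_{T,\mu}$ and collecting constants delivers the two stated inequalities with $M := \gamma \kappa + 1$, a constant depending only on $\mu_0$, $\mu_1$ and the embedding but not on $T$; enlarging $M$ to $\max(M,1)$ makes it serve both $\Lambda_{T,\mu}$ and $\Upsilon_{T,\mu}$.

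I expect the only genuinely delicate point to be this derivative term: differentiating the weight produces a lower-order term sitting in the wrong space ($L^2(0,T;V)$ rather than $L^2(0,T;V^*)$), which must be absorbed using the Gelfand-triple embedding, and one must simultaneously check that no constant secretly depends on $T$. Everything else is routine pointwise estimation of $e^{\gamma t}$ over $[0,T]$.
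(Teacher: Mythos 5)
Your proof is correct. The paper itself does not prove this lemma but refers to \cite[Lemma 1.1]{BreP18}, so there is no in-paper argument to compare against; your reduction to the pointwise bounds $1 \leq e^{\gamma t} \leq e^{\gamma T}$ is the natural one, and you correctly identify and resolve the only nontrivial point, namely that $\frac{d}{dt}(e^{\gamma t} z) = \gamma e^{\gamma t} z + e^{\gamma t}\dot z$ produces a term that must be sent from $L^2(0,T;V)$ into $L^2(0,T;V^*)$ via the Gelfand-triple embedding (rather than via the $L^\infty(0,T;Y)$ component, which would introduce a spurious $\sqrt{T}$), so that the constant $M=\gamma\kappa+1$ is indeed independent of $T$.
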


Finally, we make occasionally use of the spaces $\Lambda_{\infty,0}:= W(0,\infty) \times L^2(0,\infty) \times W(0,\infty)$ and $\Upsilon_{\infty,0}:= Y \times L^2(0,\infty;V^*) \times L^2(0,\infty;V^*) \times L^2(0,\infty)$, equipped with the following norms:
\begin{align*}
\| (y,u,p) \|_{\Lambda_{\infty,0}} = \ & \max \big( \| y \|_{W(0,\infty)}, \| u \|_{L^2(0,\infty)}, \| p \|_{W(0,\infty)} \big) \\
\| (y_0,f,g,h) \|_{\Upsilon_{\infty,0}}
= \ & \max \big( \| y_0 \|_Y, \| f \|_{L^2(0,\infty;V^*)}, \| g \|_{L^2(0,\infty;V^*)}, \| h \|_{L^2(0,\infty)} \big).
\end{align*}
Note that the elements of $\Upsilon_{\infty,0}$ do not have a component $q$, to the contrary of those in $\Upsilon_{T,0}$.

\subsection{Assumptions}

Throughout the article we assume that the following four assumptions hold true.
\begin{itemize}
\item[(A1)] The operator $-A$ can be associated with a $V$-$Y$ coercive bilinear
form $a\colon V\times V \to \mathbb R$ such that there exist $\lambda_0 \in \mathbb
R$ and $\delta > 0$ satisfying $a(v,v) \geq \delta \| v \| _V^2 - \lambda_0 \| v\|_Y^2$, for all $v \in V$.
\item[(A2)] The operator $N$ is such that $N \in \mathcal{L}(V,Y)$ and
$N^* \in \mathcal{L}(V,Y)$.
\item[(A3)] [Stabilizability] There exists an operator $F \in \mathcal{L}(Y,\R)$ such that the semigroup $e^{(A+BF) t}$ is exponentially stable on $Y$.
\item[(A4)] [Detectability] There exists an operator $K \in \mathcal{L}(Z,Y)$ such that the semigroup $e^{(A-KC) t}$ is exponentially stable on $Y$.
\end{itemize}

Let us mention that a simple example of stabilisation problem satisfying these assumptions is given in \cite[Example 2.3]{BreKP18a}. The assumptions are also satisfied for a class of control problems of the Fokker-Planck equation (see the discussion in \cite[Section 8]{BreKP17b}).
Assumptions (A3) and (A4) are well-known and analysed in infinite-dimensional systems theory, see \cite{CurZ95}, for example. In particular, there has been ongoing interest on stabilizability of infinite-dimensional parabolic systems by finite-dimensional controllers. We refer to \cite{BT11, Ray18} and the references given there. Assumptions (A3) and (A4) play an important role all along the article.
While the results of this article are obtained for scalar controls, the generalization to the case of systems of the form $\dot{y}= Ay + \sum_{j=1}^m(N_j y(t)+B_j) u_j(t)$,
with $B_j \in Y$, can easily be achieved. Assumption (A3) must be replaced by the following one: there exist operators $F_1$,...,$F_m$ in $\mathcal{L}(Y,\R)$ such that the semigroup $e^{(A + \sum_{j=1}^m B_jF_j)t}$ is exponentially stable.

Consider now the algebraic operator Riccati equation:
\begin{equation} \label{eq:are}
\langle A^* \Pi y_1,y_2 \rangle + \langle \Pi A y_1,y_2 \rangle + \langle
Cy_1,Cy_2\rangle - \frac{1}{\alpha}  \langle B, \Pi y_1\rangle \langle B, \Pi y_2\rangle= 0  \text{ for all } y_1,y_2\in\mathcal{D}(A).
\end{equation}
Due to the (exponential) stabilizability and detectability assumptions, it is well-known (see \cite[Theorem 6.2.7]{CurZ95}) that \eqref{eq:are} has a unique nonnegative self-adjoint solution $\Pi \in \mathcal{L}(Y)$ and that additionally, the semigroup generated by the operator $A_\pi:=A-\frac{1}{\alpha}BB^*\Pi$ is exponentially stable on $Y$. Let us now fix
\begin{align}\label{eq:spect_absc}
\lambda  = -\sup\limits_{\mu \in \sigma(A_\pi)}( \mathrm{Re}(\mu)) > 0.
\end{align}
The constant $\lambda$ is the one involved in \eqref{eq:main_setup_1} and \eqref{eq:main_setup_2}.
The positivity of $\lambda$ is a consequence of the exponential stability of the semigroup generated by $A_\pi$. Let us mention that its exponential stability is a crucial property for the proof of Proposition \ref{proposition:non_reg_os}, given in the article \cite{BreP18}.

\subsection{Formulation of the problem}

We are now prepared to state the problem under consideration. For $y_0 \in Y$, consider
\begin{equation} \label{eqProblem} \tag{$P$}
	\inf_{u \in L^2(0,\infty)} \mathcal{J}(u,y_0):=  \frac{1}{2}
\int_0^\infty
	\| C S(u,y_0;t)\|^2_Z \dd t + \frac{\alpha}{2}
\int_0^\infty u(t)^2 \dd t,
\end{equation}
where $S(u,y_0;\cdot)$ is the solution to
\begin{equation}\label{eqStateEquation}
\left\{	\begin{array} {l}
\dot{y}(t)=Ay(t)+Ny(t)u(t)+Bu(t), \quad \text{for} \;t>0,\\
y(0)=y_0.
\end{array} \right.
\end{equation}
Here $y=S(u,y_0)$ is referred to as solution of \eqref{eqStateEquation} if for all $T>0$, it
lies in $W(0,T)$. The well-posedness of the state equation is ensured by Lemma \ref{lemma:RegEstim} below.
The lemma is a simple generalization of \cite[Lemma 1]{BreKP17b}.

\begin{lemma} \label{lemma:RegEstim}
For all $T > 0$ and $u \in L^2(0,T)$, there exists a unique solution $y \in W(0,T)$ to the following system:
\begin{equation*}
\dot{y}= Ay + Nyu + Bu, \quad y(0)= y_0.
\end{equation*}
Moreover, there exists a continuous function $c$ such that
$\| y \|_{W(0,T)} \leq c(T,\|y_0\|_Y, \|u\|_{L^2(0,T)})$.
\end{lemma}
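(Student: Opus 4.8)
The plan is to treat the bilinear term $Nyu$ as an inhomogeneity, to solve the resulting affine parabolic equation by a contraction argument on short time intervals, to concatenate the pieces, and finally to obtain the a priori bound by a standard energy estimate combined with Gr\"onwall's inequality; this is the line of argument of \cite[Lemma~1]{BreKP17b}, which I would adapt as follows.

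First I would record the two ingredients supplied by the hypotheses. Under (A1), Lions' theorem for parabolic equations guarantees that for every $y_0 \in Y$ and every $f \in L^2(0,T;V^*)$ the problem $\dot y = Ay + f$, $y(0) = y_0$ has a unique solution $y \in W(0,T)$, with $\| y \|_{W(0,T)} \leq C(T)\big( \| y_0 \|_Y + \| f \|_{L^2(0,T;V^*)} \big)$ for a continuous nondecreasing function $C$; indeed, the substitution $\tilde y(t) = e^{-\lambda_0 t} y(t)$ turns $a(\cdot,\cdot)+\lambda_0\langle\cdot,\cdot\rangle_Y$ into a genuinely $V$-coercive form, whose energy constant is $T$-independent, and transforming back produces a factor $e^{|\lambda_0| T}$. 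Next, (A2) implies that $N$ extends to an operator of $\mathcal{L}(Y,V^*)$ via the identity $\langle N y, v \rangle_{V^*,V} = \langle y, N^* v \rangle_Y$ for $y \in Y$, $v \in V$, with $\| N y \|_{V^*} \leq \| N^* \|_{\mathcal{L}(V,Y)} \| y \|_Y$ and consistently with $N \in \mathcal{L}(V,Y)$. Since $W(0,T) \hookrightarrow C([0,T];Y)$, it follows that for $z \in W(0,T)$ and $u \in L^2(0,T)$ one has $N z u + B u \in L^2(0,T;V^*)$ with $\| N z u + B u \|_{L^2(0,T;V^*)} \leq c\big( \| z \|_{L^\infty(0,T;Y)} + 1 \big) \| u \|_{L^2(0,T)}$.

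Next I would run a fixed-point argument. For $\sigma \in (0,T]$, define $\Phi$ on $W(0,\sigma)$ by letting $\Phi(z)$ be the solution of the linear problem with right-hand side $N z u + B u$ and initial value $y_0$. The difference $\Phi(z_1) - \Phi(z_2)$ solves the linear problem with zero initial value and right-hand side $N(z_1 - z_2) u$, so that
\begin{equation*}
\| \Phi(z_1) - \Phi(z_2) \|_{W(0,\sigma)} \leq C(T)\, \| u \|_{L^2(0,\sigma)}\, \| z_1 - z_2 \|_{W(0,\sigma)} .
\end{equation*}
By absolute continuity of $s \mapsto \int_0^s u(t)^2 \dd t$, there is $\sigma^* \in (0,T]$, depending only on $T$ and $\| u \|_{L^2(0,T)}$, such that $\int_s^{s + \sigma^*} u(t)^2 \dd t \leq (2 C(T))^{-2}$ for all $s \geq 0$; on any such interval $\Phi$ is a $\frac12$-contraction and hence has a unique fixed point, which solves the nonlinear equation there. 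Partitioning $[0,T]$ into $\lceil T / \sigma^* \rceil$ subintervals and solving them in succession — each endpoint value lies in $Y$ because $W \hookrightarrow C([0,\cdot];Y)$ and serves as the next initial datum — yields the desired solution $y \in W(0,T)$, unique on $[0,T]$ by applying the contraction estimate successively on the subintervals.

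For the a priori estimate, I would test the equation with $y$, obtaining $\frac12 \frac{d}{dt} \| y \|_Y^2 + a(y,y) = \langle N y u, y \rangle_{V^*,V} + \langle B u, y \rangle_Y$; bounding the right-hand side with $\| N y u \|_{V^*} \leq c |u| \| y \|_Y$ and Young's inequality so as to absorb a fraction of the $\| y \|_V^2$ term coming from (A1), I would reach $\frac{d}{dt} \| y \|_Y^2 \leq g(t) \| y \|_Y^2 + h(t)$ with $g, h \in L^1(0,T)$ built from $\lambda_0$, $\| B \|_Y$ and $|u|^2$. Gr\"onwall's inequality then bounds $\| y \|_{L^\infty(0,T;Y)}$, reinsertion into the integrated energy identity bounds $\| y \|_{L^2(0,T;V)}$, and reading $\dot y = Ay + Nyu + Bu$ in $V^*$ bounds $\| \dot y \|_{L^2(0,T;V^*)}$; since the data enter only through $C(T)$, $\exp(c\| u \|_{L^2(0,T)}^2)$ and polynomials, the resulting constant is a continuous function $c$ of $(T,\| y_0 \|_Y,\| u \|_{L^2(0,T)})$. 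The only genuine obstacle is the bilinear term: because $u$ is merely $L^2$ in time, the contraction cannot be carried out on $[0,T]$ at once, which is precisely why the splitting into finitely many short subintervals — whose length depends on the data only through $T$ and $\| u \|_{L^2(0,T)}$ — is needed. An alternative worth mentioning is a Galerkin scheme that exploits the compactness of $V \hookrightarrow Y$ (Aubin--Lions) to pass to the limit in the term $N y^m u$, yielding existence and the bound simultaneously, with uniqueness obtained as above.
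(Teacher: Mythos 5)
Your proof is essentially the standard argument that the paper itself defers to: the paper gives no proof of this lemma and simply cites \cite[Lemma 1]{BreKP17b}, whose strategy (linear solvability from (A1) via Lions' theorem, the bilinear term treated as an $L^2(0,T;V^*)$ inhomogeneity through $N\in\mathcal{L}(Y,V^*)$, a contraction on short intervals, and the energy estimate with Young and Gr\"onwall for the a priori bound) is exactly what you reproduce; indeed the paper reuses the same differential inequality $\frac{\dd}{\dd t}\|y\|_Y^2 \leq M(\|y\|_Y^2+|u|^2+\|y\|_Y^2|u|^2)$ in the proof of Proposition \ref{prop:existence_one_step_pb}. One small inaccuracy: your claim that $\sigma^*$ can be chosen ``depending only on $T$ and $\|u\|_{L^2(0,T)}$'' is false, since absolute continuity of $s\mapsto\int_0^s u^2$ gives a $\sigma^*$ depending on the function $u$ itself, not merely on its norm (consider $u$ concentrating its $L^2$ mass on ever smaller intervals). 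This is harmless here, because existence and uniqueness only require \emph{some} finite partition for the given $u$, and the stated bound $c(T,\|y_0\|_Y,\|u\|_{L^2(0,T)})$ is obtained independently from the Gr\"onwall step, not from counting subintervals; but the phrasing should be corrected.
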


Finally, we denote by $\mathcal{V} \colon Y \rightarrow [0,\infty]$ the value function associated with Problem \eqref{eqProblem}, defined by
\begin{equation*}
\mathcal{V} (y_0) = \inf_{u \in L^2(0,\infty)} \mathcal{J}(u,y_0).
\end{equation*}
Note that the origin is a steady state of the uncontrolled system \eqref{eqStateEquation} and that $\mathcal{V}(0)=0$.

\subsection{Main result}

The goal of this article is to analyze the efficiency of the RHC method when Taylor approximations of the value function are used as terminal cost functions. The following theorem, taken from \cite{BreKP18a}, states that the value function is locally infinitely many times differentiable.

\begin{theorem}[Theorem 6.6, \cite{BreKP18a}]
The value function $\mathcal{V}$ is real-valued and infinitely differentiable in a neighborhood of 0. Moreover, $D\mathcal{V}(0)= 0$, $D^2 \mathcal{V}(0)$ is the bilinear form associated with $\Pi$ (the solution to the algebraic Riccati equation \eqref{eq:are}) and for all all $k \geq 3$, $D^k \mathcal{V}(0)$ can be obtained as the unique solution to a generalized Lyapunov equation.
\end{theorem}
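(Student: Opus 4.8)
The plan is to realize the value function, locally near $0$, as the composition of the smooth cost functional $\mathcal{J}$ with a smooth \emph{optimal control map} $y_0 \mapsto \bar u(y_0)$, and then to read off the Taylor coefficients of $\mathcal{V}$ from the first-order optimality system and from the Hamilton--Jacobi--Bellman equation. First I would establish that, because the dynamics are bilinear, the solution map $S$ is (real-)analytic from a neighbourhood of $(0,0)$ in $L^2(0,\infty) \times Y$ into a space of exponentially decaying trajectories built on $W(0,\infty)$ with an exponential weight; this is obtained by a contraction/Neumann-series argument around the stabilized linear dynamics, using Assumptions (A1)--(A2) together with the exponential stability of the closed-loop semigroups supplied by (A3)--(A4). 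It follows that $\mathcal{J} = \frac12 \| C S(\cdot,\cdot) \|_{L^2(0,\infty;Z)}^2 + \frac{\alpha}{2} \| \cdot \|_{L^2(0,\infty)}^2$ is analytic near $(0,0)$.

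Next I would write the first-order optimality system for Problem \eqref{eqProblem}: the state equation, the adjoint equation $-\dot p = A^* p + u N^* p + C^* C y$ with $p(\infty) = 0$, and the stationarity condition $\alpha u + \langle N y + B, p \rangle_Y = 0$. Collected together this is an equation $F(y,u,p;y_0) = 0$ with $F$ analytic on a neighbourhood of $(0,0,0;0)$ in exponentially weighted trajectory/control spaces, whose solution at $y_0 = 0$ is $(\bar y, \bar u, \bar p) = (0,0,0)$. The partial differential $\partial_{(y,u,p)} F(0,0,0;0)$ is exactly the optimality operator of the linear--quadratic problem obtained by deleting the term $N y u$; its invertibility --- unique solvability of the linearized forward/backward optimality system with exponentially decaying data --- is the crucial linear ingredient, and it holds because the Riccati operator $\Pi$ of \eqref{eq:are} exists and the semigroup generated by $A_\pi = A - \frac1\alpha B B^* \Pi$ is exponentially stable (equivalently $\lambda > 0$), which lets one decouple the system through the feedback $p = \Pi y$. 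The analytic implicit function theorem then produces an analytic branch $y_0 \mapsto (\bar y(y_0),\bar u(y_0),\bar p(y_0))$ on a neighbourhood of $0 \in Y$, so that $\mathcal{V}(y_0) = \mathcal{J}(\bar u(y_0),y_0)$ is real-valued and infinitely differentiable there.

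It remains to identify the derivatives. Differentiating $\mathcal{V}(y_0) = \mathcal{J}(\bar u(y_0),y_0)$ and using stationarity $\partial_u \mathcal{J}(\bar u(y_0),y_0) = 0$ gives $D\mathcal{V}(y_0) = \partial_{y_0}\mathcal{J}(\bar u(y_0),y_0)$; since $\bar u(0) = 0$ and $S(0,0) = 0$, this vanishes at $y_0 = 0$, i.e. $D\mathcal{V}(0) = 0$. The Hessian $D^2\mathcal{V}(0)$ depends only on the quadratic part of $\mathcal{J}$ and on the linearization of the optimal control map at $0$, both of which coincide with the corresponding objects for the LQ problem with $N = 0$, whose value function is exactly $\frac12 \langle y_0, \Pi y_0 \rangle$; hence $D^2\mathcal{V}(0)$ is the bilinear form associated with $\Pi$. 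For $k \geq 3$ I would substitute the (now known to be smooth, hence classical solution) $\mathcal{V}$ into the HJB equation $\frac12 \| C y \|_Z^2 + \langle A y, D\mathcal{V}(y) \rangle - \frac1{2\alpha} \langle N y + B, D\mathcal{V}(y) \rangle^2 = 0$, differentiate it $k$ times at $y = 0$, and collect terms by order: the top-order contribution is linear in the unknown symmetric $k$-form $\mathcal{T}_k = D^k\mathcal{V}(0)$ and has the structure of a generalized Lyapunov equation governed by $A_\pi$, while the already-determined forms $\mathcal{T}_2,\dots,\mathcal{T}_{k-1}$ appear only as a source term; the exponential stability of $e^{A_\pi t}$ makes this generalized Lyapunov equation uniquely solvable, so its solution must be $D^k\mathcal{V}(0)$. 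The full recursion, in the infinite-dimensional bilinear setting, is carried out in \cite{BreKP17a}.

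The main obstacle is the second step: proving, in the right function-space framework, that the linearized optimality operator at the origin is a Banach-space isomorphism, together with the accompanying exponential-decay estimates. This is precisely where Assumptions (A1)--(A4) and the stability of $A_\pi$ enter in an essential way, and it is also what dictates the use of exponentially weighted spaces --- so that the bilinear remainder $N y u$ can be treated as a genuinely higher-order perturbation of the linear dynamics. Once this isomorphism is available, the smoothness of $\mathcal{V}$ and the algebraic characterization of $D\mathcal{V}(0)$, $D^2\mathcal{V}(0)$ and $D^k\mathcal{V}(0)$ follow by routine differentiation and by the solvability of the generalized Lyapunov equations.
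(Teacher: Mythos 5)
First, a remark on what you are being measured against: the paper does not prove this statement at all --- it is imported verbatim as Theorem~6.6 of \cite{BreKP18a} --- and the strategy used in that reference is essentially the one you outline: apply the inverse/implicit function theorem to the first-order optimality system around $(0,0,0)$ in weighted trajectory spaces, using the invertibility of the linearized LQ optimality operator supplied by the Riccati operator $\Pi$ of \eqref{eq:are} and the exponential stability of $A_\pi$; obtain a smooth branch $y_0\mapsto(\bar y,\bar u,\bar p)$; write $\mathcal{V}(y_0)=\tfrac12\|C\bar y(y_0)\|_{L^2(0,\infty;Z)}^2+\tfrac{\alpha}{2}\|\bar u(y_0)\|_{L^2(0,\infty)}^2$; and read off $D^k\mathcal{V}(0)$ from the HJB equation as unique solutions of generalized Lyapunov equations (the recursion being carried out in \cite{BreKP17a}). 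So the architecture of your argument is the right one.

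Two steps are nevertheless wrong or missing as stated. (i) The map $S$ is \emph{not} analytic --- indeed not even well defined --- from a neighbourhood of $(0,0)$ in $L^2(0,\infty)\times Y$ into $W(0,\infty)$ or any exponentially weighted version of it: taking $u=0$ and $y_0\neq 0$ small gives $S(0,y_0;t)=e^{At}y_0$, which grows whenever $A$ is unstable, and (A3) is a \emph{stabilizability} assumption precisely because $A$ itself need not be stable. Hence $\mathcal{J}$ is not finite, let alone analytic, on a full neighbourhood of $(0,0)$. Either you must pre-stabilize via the feedback $u=Fy+v$ from (A3) (which changes the cost functional and has to be undone afterwards), or --- as in \cite{BreKP18a} --- you drop this step and obtain smoothness of $\mathcal{V}$ solely as the composition of the continuous quadratic form $(y,u)\mapsto\tfrac12\|Cy\|_{L^2(0,\infty;Z)}^2+\tfrac{\alpha}{2}\|u\|_{L^2(0,\infty)}^2$ on $W(0,\infty)\times L^2(0,\infty)$ with the implicit-function-theorem branch. (ii) You must still prove that this branch \emph{is} the optimal control, i.e.\ that $\mathcal{V}(y_0)=\mathcal{J}(\bar u(y_0),y_0)$: a stationary point of the optimality system of this nonconvex problem is a priori not a minimizer. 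This requires a separate existence argument for a global minimizer together with an a priori bound placing every minimizer and its costate inside the small ball on which the optimality system has a unique solution; that is exactly the content of Lemma~4.7/Proposition~4.8 of \cite{BreKP18a}, restated here as Proposition~\ref{prop:existenceSol}. With these two repairs (and with the Lyapunov equations for $D^k\mathcal{V}(0)$, $k\geq 3$, interpreted weakly on $\mathcal{D}(A)$ because $A$ is unbounded), your outline coincides with the cited proof.
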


We denote by $\mathcal{V}_k \colon Y \rightarrow \R$ the Taylor expansion of order $k \geq 2$ of the value function around 0:
\begin{equation*}
\mathcal{V}_k(y)= \sum_{j=2}^k \frac{1}{j!} D^j \mathcal{V}(0)(y,...,y).
\end{equation*}
In the above expression, $D^j \mathcal{V}(0)$ is a bounded multilinear form from $Y^k$ to $\R$.
As explained in the introduction, the RHC method consists in solving a sequence of finite-horizon problems. The finite-horizon problems considered in the present article are as follows:
\begin{equation*} \label{eq:Pb_One_step_RH} \tag{$P_{T,k}$}
\begin{cases} \begin{array}{l}
{\displaystyle \inf_{(y,u) \in W(0,T) \times L^2(0,T)} \
\frac{1}{2} \int_0^T \| Cy(t) \|_{Z}^2 \dd t + \frac{\alpha}{2} \int_0^T u(t)^2 \dd t + \mathcal{V}_k(y(T)),} \\[1em]
\qquad \text{subject to: } \dot{y}= Ay + Nyu + Bu, \quad y(0)= y_0.
\end{array}
\end{cases}
\end{equation*}
Algorithm \ref{algo:rh} below describes the Receding-Horizon method.
\begin{algorithm}
Input: $\tau \geq 0$, $T \geq \tau$, $y_0 \in Y$\;
Set $n=0$ and $y_n= y_0$\;
\For{n=0,1,2,...}{
Find a local solution $(y_{T,k},u_{T,k})$ to Problem \eqref{eq:Pb_One_step_RH}, with initial condition $y_n$\;
For a.e. $t \in (0,\tau)$, define $y_{RH}(n \tau + t)= y_{T,k}(t)$ and $u_{RH}(n \tau + t)= u_{T,k}(t)$\;
Set $y_{n+1}= y_{RH}((n+1)\tau)$.
}
\caption{Receding-Horizon method}
\label{algo:rh}
\end{algorithm}

We next state the main result of this paper. It involves the solution to problem \eqref{eqProblem}, whose existence and uniqueness will be established in Proposition \ref{prop:existenceSol} below, as well as the local solutions  to  the auxiliary problems \eqref{eq:Pb_One_step_RH} which arise in the iterative steps of the receding-horizon control method. Let us recall that the constant $\lambda$ involved in the main result has been fixed in \eqref{eq:spect_absc}. We also denote by $B_Y(\delta)$ the closed ball of $Y$ of center 0 and radius $\delta$.

\begin{theorem} \label{thm:main}
For all $k \geq 2$, there exist $\tau_0 > 0$,  $\delta > 0$, and $M > 0$ such that for all $\tau \geq \tau_0$, for all $T \geq \tau$, and all $y_0 \in B_Y(\delta)$, the Receding-Horizon method is well-posed, assuming that the local solution to \eqref{eq:Pb_One_step_RH} obtained at each iteration is the one characterized in Proposition \ref{prop:estimate_one_step}. Moreover, the following estimates hold true:
\begin{align}
\max( \| y_{RH} - \bar{y} \|_{W(0,\infty)}, \| u_{RH} - \bar{u} \|_{L^2(0,\infty)} )
\leq \ & M e^{-\lambda(T-\tau)- \lambda kT} \| y_0 \|_Y^k \label{eq:final_estim1} \\
\mathcal{J}(u_{RH},y_0) - \mathcal{V}(y_0) \leq \ & M e^{-2\lambda(T-\tau)- 2\lambda kT} \| y_0 \|_Y^{2k}, \label{eq:final_estim2}
\end{align}
where $\bar{u}$ is the unique solution to problem \eqref{eqProblem} and $\bar{y}$ the associated trajectory.
\end{theorem}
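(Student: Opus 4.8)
The plan is to reduce both estimates to a perturbation analysis of the first-order optimality systems, performed in the weighted spaces $\Lambda_{T,\mu}$ and $\Upsilon_{T,\mu}$, and then to propagate the resulting one-step bound through the iterations of Algorithm \ref{algo:rh}. For a small initial datum $z$, one writes the optimality system of \eqref{eq:Pb_One_step_RH}: the state equation, the adjoint equation, the gradient equation $\alpha u + \langle Ny+B, p \rangle = 0$, and the terminal condition that $p(T)$ equals the gradient of $\mathcal{V}_k$ at $y(T)$. By the dynamic programming principle together with the differentiability of $\mathcal{V}$ stated above, the costate associated with the optimal pair $(\bar y^z,\bar u^z)$ of \eqref{eqProblem} started at $z$ satisfies $\bar p^z(t) = D\mathcal{V}(\bar y^z(t))$; hence the restriction of $(\bar y^z,\bar u^z,\bar p^z)$ to $(0,T)$ solves the optimality system of \eqref{eq:Pb_One_step_RH} up to its terminal equation, whose residual equals $D\mathcal{V}(\bar y^z(T)) - D\mathcal{V}_k(\bar y^z(T))$. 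Since $\mathcal{V}_k$ is the Taylor polynomial of order $k$, this residual is $\mathcal{O}(\| \bar y^z(T)\|_Y^k)$, and the exponential decay of the optimal trajectory — i.e.\@ the fact, recorded in Section \ref{section:sensibility}, that $\bar y^z \in W_\mu(0,\infty)$ for $\mu$ close to $\lambda$ — bounds it further by $\mathcal{O}(e^{-\lambda k T}\| z\|_Y^k)$.

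Viewing the optimality system of \eqref{eq:Pb_One_step_RH} as a $C^1$ map from $\Lambda_{T,\mu}$ to $\Upsilon_{T,\mu}$, its derivative at the reference solution is boundedly invertible, uniformly in $T \geq \tau \geq \tau_0$, by Proposition \ref{proposition:non_reg_os} (together with a small-perturbation argument to replace $\Pi$ in the linearized terminal condition by $D^2\mathcal{V}_k(\bar y^z(T))$). The inverse mapping theorem then provides, for $\|z\|_Y$ small, a locally unique solution $(y_{T,k},u_{T,k},p_{T,k})$ of \eqref{eq:Pb_One_step_RH} whose $\Lambda_{T,\mu}$-distance to $(\bar y^z,\bar u^z,\bar p^z)$ is controlled by the $\Upsilon_{T,\mu}$-norm of the terminal residual, in which a terminal datum carries the weight $e^{\mu T}$ (the nonlinear terms coming from $Nyu$ are higher order and absorbed here). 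The main obstacle is to extract from this the \emph{sharp local-in-time} behaviour on the used subinterval $(0,\tau)$: solving the linearized system with a negative weight $\nu$, for which Proposition \ref{proposition:non_reg_os} still applies, and using $e^{\nu t} \geq e^{\nu\tau}$ on $(0,\tau)$, the $L^2(0,\tau)$-response to a terminal datum $q$ at time $T$ is bounded by $\mathcal{O}(e^{\nu(T-\tau)}\|q\|_Y)$ — this is the mechanism by which the damping of the backward-propagated terminal perturbation produces the factor $e^{-\lambda(T-\tau)}$ as $|\nu|$ approaches $\lambda$. Combining this with the residual bound from the previous paragraph yields the one-step estimate
\[
\| u_{T,k} - \bar u^z \|_{L^2(0,\tau)} + \| y_{T,k}(\tau) - \bar y^z(\tau) \|_Y = \mathcal{O}\big( e^{-\lambda(T-\tau) - \lambda k T} \| z \|_Y^k \big),
\]
which is the content of Proposition \ref{prop:estimate_one_step}.

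It remains to accumulate this over the iterations. Set $z_n := y_{RH}(n\tau)$ and $f_n := \| z_n - \bar y(n\tau) \|_Y$, where $(\bar y,\bar u)$ solves \eqref{eqProblem} from $y_0$. Using the dynamic programming principle (on $(0,\tau)$, $\bar u(n\tau + \cdot)$ is the optimal control started at $\bar y(n\tau)$, and $z_{n+1}$ is within $\mathcal{O}(e^{-\lambda(T-\tau)-\lambda kT}\|z_n\|_Y^k)$ of the optimal trajectory started at $z_n$ evaluated at time $\tau$), the Lipschitz dependence of the infinite-horizon solution on its initial condition (Section \ref{section:sensibility}), and the decay at rate $\lambda$ of the map "flow the optimal trajectory for time $\tau$", one obtains by induction $\| z_{n+1}\|_Y \leq \rho\| z_n\|_Y$ with $\rho<1$ whenever $\tau_0$ is large and $\delta$ small. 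This gives the well-posedness of Algorithm \ref{algo:rh} and $\| z_n\|_Y \leq \rho^n\| y_0\|_Y$, together with a recursion $f_{n+1} \leq q f_n + C\,e^{-\lambda(T-\tau)-\lambda kT}\rho^{kn}\|y_0\|_Y^k$ with $q<1$ and $f_0=0$, whence $f_n = \mathcal{O}\big( e^{-\lambda(T-\tau)-\lambda kT}\|y_0\|_Y^k\,\theta^n\big)$ for some $\theta<1$.

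Finally, on each interval $(n\tau,(n+1)\tau)$ the error $u_{RH}-\bar u$ is bounded, via the same Lipschitz estimate plus the one-step estimate, by $\mathcal{O}\big( e^{-\lambda(T-\tau)-\lambda kT}\|y_0\|_Y^k\,\theta^n\big)$, and likewise for the trajectories in $W(0,T)$ using Lemma \ref{lemma:RegEstim}; summing the squares over $n$ (a convergent geometric series) yields \eqref{eq:final_estim1}. Estimate \eqref{eq:final_estim2} then follows from the optimality of $\bar u$: since $D_u\mathcal{J}(\bar u,y_0)=0$, a second-order Taylor expansion gives $\mathcal{J}(u_{RH},y_0)-\mathcal{V}(y_0) = \mathcal{O}\big( \| u_{RH}-\bar u\|_{L^2(0,\infty)}^2\big)$, which is the square of \eqref{eq:final_estim1}.
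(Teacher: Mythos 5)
Your proposal is correct and follows essentially the same route as the paper: the one-step perturbation estimate for the optimality system in the weighted spaces (the content of Proposition \ref{prop:estimate_one_step} and Corollary \ref{corollary}), a contraction argument for $\|y_{RH}(n\tau)\|_Y$ together with the recursion for $f_n=\|y_{RH}(n\tau)-\bar y(n\tau)\|_Y$, summation over the sampling intervals, and a vanishing-first-order-term expansion of the cost for the second estimate. The only cosmetic differences are that you sum squares of the per-interval errors where the paper uses $\ell^2\leq\ell^1$, and you phrase Step 4 as an abstract second-order Taylor expansion of the reduced cost where the paper carries out the equivalent explicit computation with the costate.
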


The proof of the theorem is given in Section \ref{section:rhc_estimate}.

\begin{remark} \label{rem:th1}
The estimate \eqref{eq:final_estim1} is of order $k$ with respect to $\| y_0 \|_Y$. This is related to the fact that $D \mathcal{V}(y_0) = D\mathcal{V}_k(y_0) + O(\| y_0 \|_Y^k)$, as will be seen later.
Estimate \eqref{eq:final_estim1} suggests that the quality of the RHC control can be improved by increasing $T$ or reducing $\tau$. Still, the value of $\tau_0$ cannot be made arbitrary small, thus our estimate does not capture the behaviour of the RHC method for very small sampling times.
\end{remark}

\subsection{Linear optimality systems} \label{subsection:lin_opt_sys}

As was noticed in the introduction, the pairs $(y_{T,k},u_{T,k})$ and $(\bar{y}_{|(0,T)}, \bar{u}_{|(0,T)})$ satisfy similar optimality conditions: The only difference occurs in the terminal condition for the costate equation. A key issue for the proof of our main result is therefore the following: What is the impact of a modification of the terminal condition on the solution to the optimal control problem \eqref{eq:Pb_One_step_RH}? This is a typical issue of sensitivity analysis, which can be tackled with the inverse mapping theorem. In a nutshell, the inverse mapping theorem allows to prove that a certain mapping, ``containing" the first-order optimality conditions, is (locally) bijective. In order to apply the inverse mapping theorem, one needs to prove that the derivative of the mentioned mapping is bijective, which will be done several times in Section \ref{section:finite-horizon} with the help of the following proposition, which is demonstrated in \cite[Theorem 2.1]{BreP18}.

\begin{proposition} \label{proposition:non_reg_os}
Let $\mu \in \{ - \lambda, 0, \lambda \}$. Let $\mathcal{Q} \subset \mathcal{L}(Y)$ be a bounded set of symmetric positive semi-definite operators. For all $T>0$, $Q \in \mathcal{Q}$, and $(y_0,f,g,h,q) \in \Upsilon_{T,\mu}$, there exists a unique solution $(y,u,p) \in \Lambda_{T,\mu}$ to the following optimality system:
\begin{equation} \label{eq:non_reg_os}
\begin{cases}
\begin{array}{rll}
y(0) = & \! \! \! y_0 \qquad & \text{in $Y$} \\
\dot{y}-(Ay + Bu) = & \! \! \! f \qquad & \text{in $L_\mu^2(0,T;V^*)$} \\
-\dot{p} - A^* p - C^* C y = & \! \! \! g & \text{in $L_\mu^2(0,T;V^*)$} \\
\alpha u + \langle B, p \rangle_{V^*,V} = & \! \! \! - h & \text{in $L_\mu^2(0,T)$} \\
p(T) - Q y(T)= & \! \! \! q & \text{in $Y$}.
\end{array}
\end{cases}
\end{equation}
Moreover, there exists a constant $M$ independent of $T$, $Q$, and $(y_0,f,g,h,q)$ such that
\begin{equation} \label{eq:estimate_non_reg_os}
\| (y,u,p) \|_{\Lambda_{T,\mu}}
\leq M \| (y_0,f,g,h,q) \|_{\Upsilon_{T,\mu}}.
\end{equation}
\end{proposition}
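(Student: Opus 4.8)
The plan is to read \eqref{eq:non_reg_os} as the first-order optimality system of a strictly convex linear--quadratic problem and then to exploit the algebraic Riccati operator $\Pi$ to decouple the forward and backward dynamics. Concretely, I would introduce the bounded linear map $\mathcal{F} \colon \Lambda_{T,\mu} \to \Upsilon_{T,\mu}$ sending $(y,u,p)$ to the five residuals on the right-hand side of \eqref{eq:non_reg_os}. The boundedness of $\mathcal{F}$, uniformly in $T$ and $Q \in \mathcal{Q}$, is routine (it only uses $A \in \mathcal{L}(V,V^*)$, $B \in Y$, $C \in \mathcal{L}(Y,Z)$, and $\sup_{Q \in \mathcal{Q}} \| Q \|_{\mathcal{L}(Y)} < \infty$), so that the whole statement reduces to showing that $\mathcal{F}$ is a bijection whose inverse is bounded by a constant independent of $T$ and $Q$; this is precisely \eqref{eq:estimate_non_reg_os}.

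For the bijectivity, with $T$ and $Q$ fixed, I would observe that \eqref{eq:non_reg_os} is the optimality system of
\[
\min_{u \in L^2(0,T)} \ \frac{1}{2} \int_0^T \| Cy \|_Z^2 \dd t + \frac{\alpha}{2} \int_0^T u^2 \dd t + \int_0^T \langle g, y \rangle_{V^*,V} \dd t + \int_0^T h u \dd t + \frac{1}{2} \langle y(T), Q y(T) \rangle - \langle q, y(T) \rangle,
\]
subject to $\dot{y} = Ay + Bu + f$ and $y(0) = y_0$, where the signs are chosen so that the stationarity conditions reproduce \eqref{eq:non_reg_os}. This functional is continuous, coercive (through $\frac{\alpha}{2} \| u \|_{L^2}^2$), and strictly convex on $L^2(0,T)$, since $\alpha > 0$ and $Q$ is positive semi-definite and the remaining terms are convex-quadratic or linear. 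It therefore admits a unique minimizer; its optimality conditions are exactly \eqref{eq:non_reg_os}, and by convexity every solution of \eqref{eq:non_reg_os} is that minimizer. Parabolic regularity for the analytic generator $A$ places $y$ and $p$ in $W(0,T) = W_\mu(0,T)$, giving existence and uniqueness in $\Lambda_{T,\mu}$ for each $T$ and $Q$.

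The core is the uniform estimate, and here I would carry out the change of unknown $\tilde{p} = p - \Pi y$. Using the Riccati identity \eqref{eq:are} in the form $A^* \Pi + \Pi A_\pi = -C^*C$, the $y$-dependent terms in the costate equation cancel, leaving a backward equation for $\tilde{p}$ governed solely by $A_\pi^*$, namely $-\dot{\tilde{p}} = A_\pi^* \tilde{p} + g + \Pi f - \frac{1}{\alpha} \Pi B h$, together with the forward equation $\dot{y} = A_\pi y - \frac{1}{\alpha} B B^* \tilde{p} + f - \frac{1}{\alpha} B h$, $y(0) = y_0$. The two are now coupled only through the terminal relation $\tilde{p}(T) = (Q-\Pi) y(T) + q$. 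Since the semigroup generated by $A_\pi$ is exponentially stable with rate $\lambda$ (see \eqref{eq:spect_absc}), I would estimate $\tilde{p}$ by backward transport and $y$ by forward transport in the weighted norms, for each $\mu \in \{ -\lambda, 0, \lambda \}$; uniform-in-$T$ maximal parabolic regularity for the stable generator $A_\pi$ controls the interior sources, and the weight $e^{\mu T}$ carried by the terminal datum $q$ in the $\Upsilon_{T,\mu}$-norm is exactly tuned so that the backward transport of $\tilde{p}(T)$ is captured with a $T$-independent constant. To close the terminal coupling I would substitute the forward representation $y(T) = -\frac{1}{\alpha} \mathcal{G}_T \tilde{p}(T) + r$, where $\mathcal{G}_T = \int_0^T e^{A_\pi \sigma} B B^* e^{A_\pi^* \sigma} \dd \sigma \geq 0$ is the finite-horizon Gramian (uniformly bounded by exponential stability) and $r$ collects the contributions of $y_0$, $f$, $g$, $h$; this reduces the coupling to inverting $I + \frac{1}{\alpha}(Q-\Pi)\mathcal{G}_T$, whose invertibility for each $T,Q$ is already guaranteed by the uniqueness established above.

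The hard part will be the uniformity of the constant in $T$, and this difficulty is concentrated at the borderline weight $\mu = \lambda$: after weighting, the forward generator $A_\pi + \lambda$ has spectral abscissa $0$ and is only marginally stable, so estimating the forward equation in isolation against a generic source degenerates as $T \to \infty$. The remedy is to use the coupled structure rather than the two equations separately: the weighted backward operator $A_\pi^* - \lambda$ remains strictly stable (rate $2\lambda$), so $\tilde{p}$ genuinely decays, and the Lyapunov structure supplied by $\Pi$ absorbs the marginal behaviour of the forward equation. Equivalently, the uniform bound on the inverse of $I + \frac{1}{\alpha}(Q-\Pi)\mathcal{G}_T$ amounts to the uniform boundedness, in $T$ and over the bounded family $\mathcal{Q}$, of the finite-horizon differential Riccati operator with terminal condition $Q$, which follows from the exponential stabilizability and detectability assumptions (A3)--(A4). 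I expect establishing this uniform boundedness, and transferring it into the weighted $\Lambda_{T,\mu}$/$\Upsilon_{T,\mu}$ norms at $\mu = \lambda$, to be the technically heaviest component of the argument.
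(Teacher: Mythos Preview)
The paper does not prove this proposition; it simply records the statement and defers to \cite[Theorem 2.1]{BreP18}, noting only that the exponential stability of $A_\pi$ is the crucial ingredient. So there is no in-paper proof to compare against, and your proposal should be read as a candidate proof of the cited result.

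Your outline is sound and follows the standard route. The identification of \eqref{eq:non_reg_os} with the optimality system of a strictly convex LQ problem gives existence and uniqueness cleanly. The decoupling via $\tilde{p} = p - \Pi y$ is correct as you wrote it (your algebra checks out: the Riccati identity kills the $y$-term in the backward equation, leaving $-\dot{\tilde p}=A_\pi^*\tilde p + g + \Pi f - \tfrac1\alpha \Pi B h$ and $\dot y = A_\pi y - \tfrac1\alpha BB^*\tilde p + f - \tfrac1\alpha Bh$), and your reduction of the terminal coupling to the invertibility of $I+\tfrac1\alpha(Q-\Pi)\mathcal G_T$ is right. You also correctly isolate the genuine difficulty: for $\mu=\lambda$ the weighted forward generator $A_\pi+\lambda$ is only marginally stable, so one cannot estimate the two decoupled equations separately with $T$-independent constants; the coupled structure (equivalently, the uniform bound on the differential Riccati operator with terminal datum $Q\in\mathcal Q$) has to be used.

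The one place I would flag as a real gap is precisely the step you yourself mark as hardest. Uniform boundedness of $\bigl(I+\tfrac1\alpha(Q-\Pi)\mathcal G_T\bigr)^{-1}$ over all $T>0$ and $Q\in\mathcal Q$ is not a formality: $\mathcal G_T$ grows to the infinite-horizon Gramian as $T\to\infty$, and $Q-\Pi$ is merely bounded and sign-indefinite, so a direct Neumann-type argument fails. Your suggestion to pass through the differential Riccati equation with terminal condition $Q$ is the right one, but to make it work you need a uniform-in-$T$ bound on that Riccati solution and on its distance to $\Pi$, which in turn relies on detectability (A4) in an essential way (not just stabilizability). That argument, together with the transfer into the weighted $\Lambda_{T,\lambda}$/$\Upsilon_{T,\lambda}$ norms, is the substantive content of the cited theorem, and your sketch does not yet supply it. Since the paper explicitly points to \cite{BreP18} for this, your plan is aligned with what the authors intend; just be aware that the $\mu=\lambda$ case will require more than the decoupling alone.
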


\section{Sensitivity analysis for the non-linear problem} \label{section:sensibility}

In this section we gather some results from \cite{BreKP18a}.
The following proposition deals with the existence of a solution to \eqref{eqProblem} and with first-order necessary optimality conditions. All along the paper, the constants $M$ which are used are generic constants, whose value may change.

\begin{proposition}[Lemma 4.7, Proposition 4.8, \cite{BreKP18a}] \label{prop:basic_oc}
There exists $\delta_1>0$ such that for all $y_0 \in B_Y(\delta_1)$, Problem \eqref{eqProblem} with initial condition $y_0$ has a unique solution $u$. Moreover, there exists a unique costate $p \in W(0,\infty)$ such that
\begin{align*}
-\dot{p} - (A + uN)^* p - C^*Cy = \ & 0, \\
\alpha u + \langle p, Ny + B \rangle_Y = \ & 0,
\end{align*}
where $y= S(u,y_0)$.
\end{proposition}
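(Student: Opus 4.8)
The proposition gathers three facts valid for small initial data: finiteness of the value function, existence and uniqueness of a minimizer, and the derivation of the optimality system together with the regularity $p \in W(0,\infty)$. I would argue in four steps, along lines that essentially reproduce Lemma~4.7 and Proposition~4.8 of \cite{BreKP18a}. First, to exhibit an admissible control with finite, quadratically bounded cost, I would use the Riccati feedback $u_\Pi = -\frac{1}{\alpha} B^*\Pi y$, which turns the state equation \eqref{eqStateEquation} into the semilinear closed loop $\dot y = A_\pi y - \frac{1}{\alpha}(B^*\Pi y)\,Ny$, $y(0)=y_0$. Since $A_\pi$ generates an exponentially stable analytic semigroup (recalled after \eqref{eq:are}) and the nonlinearity is quadratic and maps $V$ into $Y$ by (A2), a small-data fixed-point argument in an exponentially weighted subspace of $W(0,\infty)$ yields, for $\|y_0\|_Y$ small enough, a global solution with exponential decay and $\|y\|_{W(0,\infty)} \leq M\|y_0\|_Y$; inserting $(u_\Pi,y)$ into $\mathcal{J}$ gives $\mathcal{V}(y_0) \leq M\|y_0\|_Y^2 < \infty$.

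For the existence of a minimizer I would take a minimizing sequence $(u_n)$ with states $y_n = S(u_n,y_0)$. The bound on $\mathcal{J}(u_n,y_0)$ controls $\|u_n\|_{L^2(0,\infty)}$ and $\|Cy_n\|_{L^2(0,\infty;Z)}$ and, via Lemma~\ref{lemma:RegEstim}, bounds $y_n$ in $W(0,T)$ for each $T$. Extracting a subsequence, $u_n \rightharpoonup \bar u$ in $L^2(0,\infty)$ and $y_n \rightharpoonup \bar y$ in every $W(0,T)$; the compact embedding $V \hookrightarrow Y$ and the Aubin--Lions lemma upgrade this to strong convergence of the states in $L^2(0,T;Y)$, which suffices to pass to the limit in the bilinear term $Ny_nu_n$ and identify $\bar y = S(\bar u,y_0)$, the infinite horizon being handled by exponential tail estimates stemming from (A4). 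Weak lower semicontinuity of $\mathcal{J}$ then gives $\mathcal{J}(\bar u,y_0) = \mathcal{V}(y_0)$.

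To obtain the optimality system, I would first note that for fixed $y_0$ the control-to-state map $u \mapsto S(u,y_0)$ is smooth on $L^2(0,T)$ — the state equation being affine--bilinear in $(y,u)$, this follows from the implicit function theorem together with Lemma~\ref{lemma:RegEstim} — with derivative solving the linearized equation. Writing $D_u\mathcal{J}(\bar u,y_0) = 0$ and introducing the adjoint state $p$ as the backward solution of $-\dot p - (A+\bar uN)^* p - C^*C\bar y = 0$ produces the gradient identity $\alpha \bar u + \langle p, N\bar y + B\rangle_Y = 0$. The regularity $p \in W(0,\infty)$ follows from the exponential decay of $\bar y$ — a consequence of $\int_0^\infty \|C\bar y\|_Z^2 \dd t < \infty$, of (A4), and of the smallness of $\bar u$ established in the first step, which keeps $A + \bar u N$ a small time-varying perturbation of a detectable pair — so that $C^*C\bar y$ lies in $L^2(0,\infty;V^*)$ with an exponential weight; a backward-in-time stability estimate then propagates the decay to $p$.

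The main obstacle is uniqueness, since $\mathcal{J}$ is not convex because of the bilinear term. Here I would first show that any solution of the optimality system is itself small in $\Lambda_{\infty,0}$ when $\|y_0\|_Y$ is small, so that the full forward--backward system is a small perturbation of the linear--quadratic optimality system associated with $N=0$, whose unique solution is the Riccati feedback; a contraction/inverse-mapping argument carried out directly on $W(0,\infty)$ — in the spirit of the finite-horizon analysis of Section~\ref{section:finite-horizon} — then yields local uniqueness of the critical point, and hence of the minimizer. The two delicate ingredients are thus this local uniqueness and the infinite-horizon regularity of the adjoint, both of which rely crucially on the stabilizability and detectability assumptions (A3)--(A4).
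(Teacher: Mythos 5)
The paper does not actually prove this proposition: it is imported verbatim from \cite{BreKP18a} (Lemma 4.7, Proposition 4.8), and the only in-paper argument touching it is the proof of Proposition \ref{prop:existenceSol}, which applies the inverse mapping theorem to the map $\Phi_1$ of \eqref{eq:defPhi1} around $(0,0,0)$ --- exactly your fourth step. Your overall architecture (Riccati feedback to bound the value function, direct method for existence, first-order conditions, inverse-mapping argument around the linear-quadratic optimality system for uniqueness) is the same as that of the cited reference, so the strategy is the intended one.

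One step as written would fail, though. In the uniqueness argument you propose to ``show that any solution of the optimality system is itself small in $\Lambda_{\infty,0}$ when $\|y_0\|_Y$ is small''. Nothing rules out large critical points of this nonconvex problem, and no a priori estimate on arbitrary solutions of the forward--backward system is available. What can be bounded is any global \emph{minimizer}: from $\mathcal{J}(\bar u,y_0)\le \mathcal{V}(y_0)\le M\|y_0\|_Y^2$ one gets $\|\bar u\|_{L^2(0,\infty)}$ and $\|C\bar y\|_{L^2(0,\infty;Z)}$ of order $\|y_0\|_Y$, and the detectability-based estimate of Lemma \ref{lemma:observer} (with $T=\infty$) upgrades this to $\|\bar y\|_{W(0,\infty)}\le M\|y_0\|_Y$; a corresponding bound for $\bar p$ then follows from the linear optimality system. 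Only after this does the local uniqueness furnished by the inverse mapping theorem apply to \emph{every} minimizer and yield uniqueness. A related gloss: the decaying costate on $(0,\infty)$ cannot be obtained from a ``backward-in-time stability estimate'' for $-\dot p = A^*p+\bar u N^*p + C^*C\bar y$ in isolation, since $A^*$ need not generate a stable semigroup (only $A-KC$ and $A_\pi$ do); in the paper's framework the costate in $W(0,\infty)$ is produced by the invertibility of the coupled forward--backward linear-quadratic system, i.e.\@ by $D\Phi_1(0,0,0)$ being an isomorphism of $\Lambda_{\infty,0}$ onto $\Upsilon_{\infty,0}$, which is where (A3)--(A4) enter.
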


Consider the mapping $\Phi_1$, defined as follows:
\begin{equation} \label{eq:defPhi1}
\Phi_1 \colon
(y,u,p) \in \Lambda_{\infty,0} \mapsto
\begin{pmatrix}
y(0) \\ \dot{y}- (Ay + (Ny+B)u) \\ - \dot{p} - A^*p - u N^* p - C^*C y \\
\alpha u + \langle Ny+ B,p \rangle_Y
\end{pmatrix}
\in \Upsilon_{\infty,0}.
\end{equation}
The mapping $\Phi_1$ is such that for all $(y,u,p) \in \Lambda_{\infty,0}$, the triplet $(y,u,p)$ satisfies the optimality conditions of Proposition \ref{prop:basic_oc} if and only if $\Phi_1(y,u,p)= (y_0,0,0,0)$.
The following proposition is a refinement of Proposition \ref{prop:basic_oc}.

\begin{proposition}[Lemma 4.7, Proposition 4.8, \cite{BreKP18a}] \label{prop:existenceSol}
There exist $\delta_1 > 0$, $\delta_1'>0$, $M>0$, and three $M$-Lipschitz continuous mappings
\begin{equation*}
y_0 \in B_Y(\delta_1) \mapsto (\mathcal{Y}_1(y_0),\mathcal{U}_1(y_0),\mathcal{P}_1(y_0)) \in \Lambda_{\infty,0}
\end{equation*}
such that the following holds:
\begin{enumerate}
\item For all $y_0 \in B_Y(\delta_1)$, $(\mathcal{Y}_1(y_0),\mathcal{U}_1(y_0),\mathcal{P}_1(y_0))$ is the unique solution to
\begin{equation} \label{eq:nonLinPhi1}
\Phi_1(y,u,p)= (y_0,0,0,0), \quad
\| (y,u,p) \|_{\Lambda_{\infty,0}} \leq \delta_1'.
\end{equation}
\item For all $y_0 \in B_Y(\delta_1)$, the control $\mathcal{U}_1(y_0)$ is the unique solution to \eqref{eqProblem} with initial condition $y_0$, with associated trajectory  $\mathcal{Y}_1(y_0)$ and costate $\mathcal{P}_1(y_0)$.
\end{enumerate}
\end{proposition}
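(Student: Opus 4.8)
The statement to prove is Proposition~\ref{prop:existenceSol}, which refines Proposition~\ref{prop:basic_oc} by asserting Lipschitz dependence of the optimal triple on $y_0$ and uniqueness within a ball of $\Lambda_{\infty,0}$. Since the paper attributes it to \cite{BreKP18a}, and it is a self-contained sensitivity result, the natural plan is an inverse/implicit function theorem argument applied to the map $\Phi_1$.

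The plan is to apply the inverse mapping theorem to $\Phi_1\colon \Lambda_{\infty,0}\to\Upsilon_{\infty,0}$ at the point $(0,0,0)$. First I would verify that $\Phi_1$ is well-defined and $C^1$ (indeed smooth) between these Banach spaces in a neighborhood of the origin; the only nontrivial terms are the bilinear ones $(Ny+B)u$ and $uN^*p$, whose smoothness follows from Assumption~(A2) (so that $N, N^*\in\mathcal L(V,Y)$) together with the continuous embedding $W(0,\infty)\hookrightarrow L^\infty(0,\infty;Y)\cap L^2(0,\infty;V)$, which lets one estimate products $yu$ in $L^2(0,\infty;V^*)$. One checks $\Phi_1(0,0,0)=(0,0,0,0)$. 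Next I would compute $D\Phi_1(0,0,0)$: dropping the bilinear terms, this linearized map sends $(y,u,p)$ to $\big(y(0),\ \dot y-(Ay+Bu),\ -\dot p-A^*p-C^*Cy,\ \alpha u+\langle B,p\rangle\big)$, i.e.\ exactly the linear optimality system \eqref{eq:non_reg_os} with $Q=0$ and $q$ absent (since on $(0,\infty)$ the transversality condition becomes $p\to 0$, encoded in $p\in W(0,\infty)$). By Proposition~\ref{proposition:non_reg_os} with $\mu=0$ — or more precisely by its infinite-horizon analogue, which follows by letting $T\to\infty$ using the uniform-in-$T$ estimate \eqref{eq:estimate_non_reg_os} — this linear map is a bounded bijection with bounded inverse. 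Hence $D\Phi_1(0,0,0)$ is an isomorphism of Banach spaces.

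The inverse mapping theorem then yields neighborhoods $\mathcal O\ni(0,0,0)$ in $\Lambda_{\infty,0}$ and $\mathcal O'\ni(0,0,0,0)$ in $\Upsilon_{\infty,0}$ such that $\Phi_1|_{\mathcal O}\colon\mathcal O\to\mathcal O'$ is a $C^1$-diffeomorphism; choosing $\delta_1'$ so that the ball $\{\|(y,u,p)\|_{\Lambda_{\infty,0}}\le\delta_1'\}\subset\mathcal O$, and $\delta_1$ so that $\|(y_0,0,0,0)\|_{\Upsilon_{\infty,0}}=\|y_0\|_Y\le\delta_1$ lands in $\mathcal O'$, I would \emph{define} $(\mathcal Y_1(y_0),\mathcal U_1(y_0),\mathcal P_1(y_0))$ to be the image of $(y_0,0,0,0)$ under the local inverse. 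The local inverse is Lipschitz (being $C^1$ on a bounded neighborhood, after shrinking), and composing with the Lipschitz — indeed linear isometric — inclusion $y_0\mapsto(y_0,0,0,0)$ gives the claimed $M$-Lipschitz dependence; this proves assertion~(1). For assertion~(2), I would invoke Proposition~\ref{prop:basic_oc}: its unique optimal control, with trajectory and costate, satisfies $\Phi_1=(y_0,0,0,0)$, so it is \emph{a} solution of \eqref{eq:nonLinPhi1}; one then checks that the optimal triple has $\Lambda_{\infty,0}$-norm $\le\delta_1'$ for $\|y_0\|_Y$ small — this uses the a~priori regularity bound from Proposition~\ref{prop:basic_oc}/Lemma~\ref{lemma:RegEstim} and the fact that $\mathcal V(y_0)=O(\|y_0\|_Y^2)$ bounds $\|u\|_{L^2}$ — so by the uniqueness in \eqref{eq:nonLinPhi1} it must coincide with $(\mathcal Y_1,\mathcal U_1,\mathcal P_1)(y_0)$. (Possibly $\delta_1$ must be shrunk once more, hence the two radii $\delta_1,\delta_1'$ in the statement.)

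The main obstacle is the bijectivity of the linearized map $D\Phi_1(0,0,0)$ \emph{on the infinite horizon}, i.e.\ transferring Proposition~\ref{proposition:non_reg_os} (stated for finite $T$) to $T=\infty$. The subtle point is that one needs solutions in $W(0,\infty)\times L^2(0,\infty)\times W(0,\infty)$ — in particular global square-integrability of $y$, $u$, and $p$ — which is exactly where the exponential stability of the closed-loop generator $A_\pi$ and the detectability Assumption~(A4) enter: the standard Riccati-feedback change of variables $p=\Pi y+r$ decouples the system into a stable forward equation for $y$ driven by $r$ and a stable backward equation for $r$ driven by the data, and the uniform-in-$T$ bound \eqref{eq:estimate_non_reg_os} guarantees the limit as $T\to\infty$ stays in the right space with a controlled norm. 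A secondary (but purely technical) point is the careful bookkeeping to show the optimal triple of Proposition~\ref{prop:basic_oc} lies in the small ball $\{\|\cdot\|_{\Lambda_{\infty,0}}\le\delta_1'\}$, so that the local uniqueness of the inverse mapping theorem actually identifies it with $(\mathcal Y_1,\mathcal U_1,\mathcal P_1)(y_0)$.
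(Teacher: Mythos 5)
Your proposal follows essentially the same route as the paper: both apply the inverse mapping theorem to $\Phi_1$ at the origin, reduce the key step to the bounded invertibility of the linearized infinite-horizon optimality system $D\Phi_1(0,0,0)$, and then identify the resulting locally unique solution with the optimal triple from Proposition \ref{prop:basic_oc}. The only difference is that the paper delegates the invertibility of the linearization and the optimality of $\mathcal{U}_1(y_0)$ to the cited reference \cite[Lemmas 4.4 and 4.7, Proposition 4.8]{BreKP18a}, whereas you sketch how these could be obtained (a $T\to\infty$ limit of Proposition \ref{proposition:non_reg_os} via the Riccati decoupling, and a smallness bound placing the optimal triple inside the uniqueness ball); this is consistent with, and somewhat more self-contained than, the paper's argument.
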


\begin{proof}
The first part of the result is a direct consequence of the inverse mapping theorem (see Theorem \ref{thmInvThm} in the Appendix). We have $\Phi_1(0,0,0)=(0,0,0,0)$.
One can check that the mapping $\Phi_1$ is well-defined and continuously differentiable and that $D\Phi_1$ is globally Lipschitz continuous, since it only contains linear terms and three bilinear terms, $Nyu$, $uN^*p$, and $\langle Ny,p \rangle_Y$.
For all $(y,u,p) \in \Lambda_{\infty,0}$, for all $(w_1,w_2,w_3,w_4) \in \Upsilon_{\infty,0}$,
\begin{equation*}
D\Phi_1(0,0,0)(y,u,p)= (w_1,w_2,w_3,w_4)
\Longleftrightarrow
\begin{cases}
\begin{array}{rl}
y(0)= & w_1 \\
\dot{y} - (Ay + Bu)= & w_2 \\
-\dot{p} - A^*p - C^*Cy = & w_3 \\
\alpha u + \langle B, p \rangle_Y= & w_4.
\end{array}
\end{cases}
\end{equation*}
The above linear system has a unique solution $(y,u,p)$, moreover
$\| (y,u,p) \|_{\Lambda_{\infty,0}} \leq M \| (w_1,w_2,w_3,w_4) \|_{\Upsilon_{\infty,0}}$,
for some constant $M$ independent of $(w_1,w_2,w_3,w_4)$. We refer the reader to \cite[Lemmas 4.4 and 4.7]{BreKP18a} for a proof of existence and uniqueness and for the a priori bound.
This proves that $D\Phi_1(0,0,0)$ is invertible with a bounded inverse and finally, that the inverse mapping theorem applies.

For the second part of the theorem (the optimality of $\mathcal{U}_1(y_0)$), we refer to \cite[Lemma 4.7, Proposition 4.8]{BreKP18a}.
\end{proof}

In the sequel, we will write $(\mathcal{Y}_1,\mathcal{U}_1,\mathcal{P}_1)(y_0)$ instead of $(\mathcal{Y}_1(y_0),\mathcal{U}_1(y_0),\mathcal{P}_1(y_0))$.
Note that the mappings $\mathcal{Y}_1$, $\mathcal{U}_1$, and $\mathcal{P}_1$ will be used all along the article to indicate the solution to \eqref{eqProblem} and its associated trajectory and costate. Note also that $(\mathcal{Y}_1,\mathcal{U}_1,\mathcal{P}_1)(0)= (0,0,0)$. From time to time, we simply denote this triple by $(\bar{y},\bar{u},\bar{p})$, when the initial condition has been specified and no risk of confusion is possible.

Finally, we will also make use of the following result, known in the literature as sensitivity relation.

\begin{lemma}[Lemma 5.1, \cite{BreKP18a}] \label{lemma:sensitivity_relation}
There exists $\delta_2 \in (0,\delta_1]$ such that for all $y_0 \in B_Y(\delta_2)$, for all $t \in [0,\infty)$, the value function is differentiable at $y(t)$ with $p(t)= D \mathcal{V}(y(t))$, where $y= \mathcal{Y}(y_0)$ and $p= \mathcal{P}(y_0)$.
\end{lemma}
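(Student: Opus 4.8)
This is the classical ``costate equals gradient of the value function'' identity, and the natural route is an envelope (principle of optimality) argument. Write $(\bar y,\bar u,\bar p)=(\mathcal{Y}_1,\mathcal{U}_1,\mathcal{P}_1)(y_0)$. First I would fix the radius $\delta_2$: by \cite[Theorem 6.6]{BreKP18a}, $\mathcal{V}$ is (infinitely) differentiable on some ball $B_Y(\delta_0)$; by Proposition \ref{prop:existenceSol} the map $y_0\mapsto(\bar y,\bar u,\bar p)$ is $M$-Lipschitz and vanishes at $0$, so $\|(\bar y,\bar u,\bar p)\|_{\Lambda_{\infty,0}}\le M\|y_0\|_Y$; taking $\delta_2\le\min(\delta_1,\delta_0/M)$ and using $W(0,\infty)\hookrightarrow C_b([0,\infty),Y)$ gives $\|\bar y(t)\|_Y\le M\delta_2\le\delta_0$ for all $t\ge 0$. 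Hence $\mathcal{V}$ is automatically differentiable at every $\bar y(t)$, and the only thing left to prove is $\bar p(t)=D\mathcal{V}(\bar y(t))$.

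Next I would establish the identity at $t=0$. For $h\in Y$ with $\|h\|_Y$ small, let $S(\bar u,y_0+h)$ be the state generated by the fixed control $\bar u$ and initial datum $y_0+h$; since $\bar u$ is a feasible, hence suboptimal, control for initial datum $y_0+h$, one has $\mathcal{V}(y_0+h)\le j(h):=\mathcal{J}(\bar u,y_0+h)$ with equality at $h=0$. The map $h\mapsto j(h)$ is differentiable near $0$ (the linearized state equation $\dot{\delta y}=(A+\bar u N)\delta y$, $\delta y(0)=h$, is well posed on $W(0,\infty)$, being, for $\|y_0\|_Y$ small, a small perturbation of the exponentially stable $N=0$ case), with $Dj(0)h=\int_0^\infty\langle C\bar y,C\delta y\rangle_Z\dd t$. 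Using the costate equation $-\dot{\bar p}-(A+\bar u N)^*\bar p=C^*C\bar y$ of Proposition \ref{prop:basic_oc}, integrating by parts on $(0,\infty)$, and noting that the boundary term $t\mapsto\langle\bar p(t),\delta y(t)\rangle_Y$ is absolutely continuous with integrable derivative and lies in $L^1(0,\infty)$ (since $W(0,\infty)\hookrightarrow L^2(0,\infty;Y)$), hence converges to $0$ at infinity, I obtain $Dj(0)h=\langle\bar p(0),h\rangle_Y$. Since both $\mathcal{V}(y_0+\cdot)$ and $j$ are differentiable at $0$ and $j-\mathcal{V}(y_0+\cdot)\ge 0=j(0)-\mathcal{V}(y_0)$, the derivative of $j-\mathcal{V}(y_0+\cdot)$ at $0$ vanishes, whence $D\mathcal{V}(y_0)=\bar p(0)$.

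Finally I would propagate this to $t>0$ by a shift argument. By the principle of optimality, $\bar u(t+\cdot)$ is the optimal control of Problem \eqref{eqProblem} with initial datum $\bar y(t)$ (a better control on $(t,\infty)$, concatenated with $\bar u|_{(0,t)}$, would contradict optimality of $\bar u$). Moreover $(\bar y(t+\cdot),\bar u(t+\cdot),\bar p(t+\cdot))$ solves $\Phi_1(\cdot)=(\bar y(t),0,0,0)$ and its $\Lambda_{\infty,0}$-norm is $\le\|(\bar y,\bar u,\bar p)\|_{\Lambda_{\infty,0}}\le\delta_1'$ (restriction to a half-line does not increase the $L^2$ and $W(0,\infty)$ norms), so by the uniqueness part of Proposition \ref{prop:existenceSol} it coincides with $(\mathcal{Y}_1,\mathcal{U}_1,\mathcal{P}_1)(\bar y(t))$. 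Applying the $t=0$ identity with $\bar y(t)$ in place of $y_0$ yields $D\mathcal{V}(\bar y(t))=\mathcal{P}_1(\bar y(t))(0)=\bar p(t)$, which is the assertion.

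\textbf{Main obstacle.} The substance of the proof is entirely in the analytic justifications of the $t=0$ step: the differentiability of the fixed-control cost $j$ requires controlling the state map $S(\bar u,\cdot)$ and the linearized flow $\delta y$ in $W(0,\infty)$ \emph{uniformly over the infinite horizon} — this is precisely where the exponential stability underlying the weighted-space framework (and hence Proposition \ref{proposition:non_reg_os}) enters — and one must carefully verify that the boundary term in the integration by parts vanishes at infinity. Once these are secured, the envelope inequality $\mathcal{V}(y_0+\cdot)\le j$ delivers the result without requiring any regularity of $y_0\mapsto\mathcal{U}_1(y_0)$ beyond its continuity.
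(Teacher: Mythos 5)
First, note that the paper does not prove this lemma at all: it is imported verbatim from \cite[Lemma 5.1]{BreKP18a}, so the comparison below is with the argument of that reference rather than with a proof in the present text.

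Your choice of $\delta_2$ and your shift argument for $t>0$ (dynamic programming plus the uniqueness statement of Proposition \ref{prop:existenceSol} to identify $(\bar y(t+\cdot),\bar u(t+\cdot),\bar p(t+\cdot))$ with $(\mathcal{Y}_1,\mathcal{U}_1,\mathcal{P}_1)(\bar y(t))$) are fine. The gap is in the $t=0$ step, and it is fatal under the standing assumptions. You freeze the optimal control $\bar u$ and perturb only the initial condition, so your comparison function is $j(h)=\mathcal{J}(\bar u,y_0+h)$ and the perturbation $\delta y$ solves the homogeneous equation $\dot{\delta y}=(A+\bar u N)\delta y$, $\delta y(0)=h$. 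You justify that $\delta y\in W(0,\infty)$ by calling this ``a small perturbation of the exponentially stable $N=0$ case'' --- but the $N=0$ case is $\dot{\delta y}=A\delta y$, and $A$ is \emph{not} assumed exponentially stable: (A1) only gives coercivity up to the shift $\lambda_0$, and (A3) is stabilizability, not stability. The exponentially stable operator in this paper is the closed-loop generator $A_\pi=A-\frac{1}{\alpha}BB^*\Pi$, not $A$. When $A$ has unstable modes (as in the paper's own numerical example, where $\sigma(A)\ni 0.5$), $\delta y$ grows exponentially for generic $h$ (since $\bar u(t)\to 0$, the equation is asymptotically $\dot{\delta y}=A\delta y$); by the detectability-based observability estimate, $C\delta y\in L^2$ would force $\delta y\in W(0,\infty)$, so in fact $C\delta y\notin L^2(0,\infty;Z)$ and $j(h)=+\infty$. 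The envelope inequality $\mathcal{V}(y_0+h)\le j(h)$ is then vacuous, $j$ is not differentiable (not even finite), and the boundary term in your integration by parts does not vanish at infinity. Your closing remark --- that the argument needs no regularity of $y_0\mapsto\mathcal{U}_1(y_0)$ beyond continuity --- identifies exactly what cannot be dispensed with here.

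The repair, which is the route taken in \cite{BreKP18a}, is to perturb the control along with the initial condition: write $\mathcal{V}(y_0)=\mathcal{J}(\mathcal{U}_1(y_0),y_0)$, use the fact that $(\mathcal{Y}_1,\mathcal{U}_1,\mathcal{P}_1)$ is differentiable in $y_0$ (this follows from the smooth inverse of $\Phi_1$ provided by the inverse mapping theorem, not merely from Lipschitz continuity), and apply the chain rule. The directional derivative $(z,v)=D(\mathcal{Y}_1,\mathcal{U}_1)(y_0)h$ automatically lies in $W(0,\infty)\times L^2(0,\infty)$ and satisfies the \emph{controlled} linearized equation $\dot z=Az+Nz\,\bar u+(N\bar y+B)v$, $z(0)=h$. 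Then
\begin{equation*}
D\mathcal{V}(y_0)h=\int_0^\infty \langle C^*C\bar y,z\rangle \dd t+\alpha\int_0^\infty \bar u\,v\dd t
=\langle \bar p(0),h\rangle_Y+\int_0^\infty\big(\alpha\bar u+\langle N\bar y+B,\bar p\rangle_Y\big)v\dd t
=\langle\bar p(0),h\rangle_Y,
\end{equation*}
where the integration by parts uses the costate equation of Proposition \ref{prop:basic_oc} and the last equality uses the gradient equation; the boundary term at infinity now genuinely vanishes because $z$ and $\bar p$ both belong to $W(0,\infty)$. In other words, the stationarity of $u\mapsto\mathcal{J}(u,y_0)$ at $\bar u$ is what kills the control-variation term, and it is this mechanism --- not an open-loop envelope bound --- that yields $\bar p(0)=D\mathcal{V}(y_0)$.
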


\section{Analysis of the finite-horizon problem} \label{section:finite-horizon}

From now on, the order of approximation $k$ of the Taylor expansion is fixed.
We start this section with a result concerning the existence of a solution to Problem \eqref{eq:Pb_One_step_RH} (Proposition \ref{prop:existence_one_step_pb}) and provide then optimality conditions (Lemma \ref{lemma:oc_for_one_step}). The comparison of the pairs $(y_{T,k},u_{T,k})$ and $(\bar{y}_{|(0,T)}, \bar{u}_{|(0,T)})$ (announced in subsection \ref{subsection:lin_opt_sys}) is done in Proposition \ref{prop:estimate_one_step}.

\begin{lemma} \label{lemma:observer}
There exists $\delta_3 >0$ and $M > 0$ such that for all $T \in (0,\infty)$, for all $u \in L^2(0,T)$ with $\| u \|_{L^2(0,T)} \leq \delta_3$, and for all $y_0 \in Y$, the following estimate holds:
\begin{equation*}
\| y \|_{W(0,T)} \leq M \big( \| y_0 \|_Y + \| u \|_{L^2(0,T)} + \| Cy \|_{L^2(0,T;Z)} \big),
\end{equation*}
where $y$ denotes the solution to the system: $\dot{y}= Ay + N y u + Bu$, $y(0)= y_0$.
\end{lemma}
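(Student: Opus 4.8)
The idea is to exploit the detectability assumption (A4): there is $K \in \mathcal{L}(Z,Y)$ such that $e^{(A-KC)t}$ is exponentially stable on $Y$, say with decay $\| e^{(A-KC)t} \|_{\mathcal{L}(Y)} \leq M_0 e^{-\omega_0 t}$ for some $\omega_0 > 0$. I would first rewrite the state equation as
\begin{equation*}
\dot{y} = (A-KC) y + KCy + Nyu + Bu, \qquad y(0) = y_0,
\end{equation*}
treating $KCy + Nyu + Bu$ as a source term. Applying standard parabolic a priori estimates for the analytic semigroup generated by $A-KC$ (which, by (A1), is again associated with a $V$-$Y$ coercive form, up to a shift, and hence admits the maximal regularity / $W(0,T)$ estimate), one gets a bound of the form
\begin{equation*}
\| y \|_{W(0,T)} \leq M \big( \| y_0 \|_Y + \| Cy \|_{L^2(0,T;Z)} + \| Nyu \|_{L^2(0,T;V^*)} + \| Bu \|_{L^2(0,T;V^*)} \big),
\end{equation*}
with $M$ independent of $T$ thanks to the exponential stability of $e^{(A-KC)t}$ (this uniformity in $T$ is exactly what the stabilized/detected form buys us, compared to the non-uniform constant $c(T,\cdot)$ in Lemma \ref{lemma:RegEstim}). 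The term $\| Bu \|_{L^2(0,T;V^*)} \leq \| B \|_{V^*} \| u \|_{L^2(0,T)}$ is immediately of the desired form.

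The one genuinely nonlinear term is $Nyu$, and handling it is where the smallness hypothesis $\| u \|_{L^2(0,T)} \leq \delta_3$ enters. Here I would use that $N \in \mathcal{L}(V,Y)$ (Assumption (A2)) together with the embedding $W(0,T) \hookrightarrow C_b([0,T];Y) \cap L^2(0,T;V)$ and an interpolation/Hölder argument: for the bilinear map $(y,u) \mapsto Nyu$ one has an estimate of the type
\begin{equation*}
\| Nyu \|_{L^2(0,T;Y)} \leq M \| u \|_{L^2(0,T)} \| y \|_{W(0,T)},
\end{equation*}
possibly after using $\| y(t) \|_Y \leq \| y \|_{W(0,T)}$ pointwise and absorbing the $L^2$-in-time norm of $u$ (this is precisely the kind of estimate already used in the proof of Lemma \ref{lemma:RegEstim} / \cite[Lemma 1]{BreKP17b}, which is why the statement says the present lemma is a variant of it). Plugging this back into the a priori bound gives
\begin{equation*}
\| y \|_{W(0,T)} \leq M \big( \| y_0 \|_Y + \| u \|_{L^2(0,T)} + \| Cy \|_{L^2(0,T;Z)} \big) + M \delta_3 \| y \|_{W(0,T)}.
\end{equation*}

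Finally I would choose $\delta_3 > 0$ small enough that $M\delta_3 \leq \tfrac{1}{2}$, absorb the last term into the left-hand side, and relabel the constant; this yields the claimed inequality with a constant $M$ depending only on the data (in particular independent of $T$ and of $y_0$). The main obstacle, and the only delicate point, is establishing the $T$-uniform version of the maximal-regularity estimate for $e^{(A-KC)t}$ and controlling the nonlinear term $Nyu$ by $\| u \|_{L^2}$ times $\| y \|_{W(0,T)}$ with a constant that does not degrade as $T \to \infty$; once that is in place, the absorption argument is routine. One subtlety worth a line: since the a priori estimate naturally produces $\| Cy \|_{L^2(0,T;Z)}$ on the right only if $C$ can be split off, I would, if needed, instead not use $K$ at all in the source but keep $-KCy$ on the right and note $\| KCy \|_{L^2(0,T;V^*)} \leq M \| Cy \|_{L^2(0,T;Z)}$, so that the observation term appears exactly as in the statement.
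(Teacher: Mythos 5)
Your proposal is correct and follows exactly the argument the paper intends: the paper gives no proof here, deferring to \cite[Lemma 2.7]{BreKP18a}, and that argument is precisely the detectability trick you describe (rewrite the dynamics with $A-KC$, use the exponential stability of $e^{(A-KC)t}$ to get a $T$-uniform $W(0,T)$ estimate, put $KCy$, $Bu$, and $Nyu$ on the right, and absorb the bilinear term via the smallness of $\| u \|_{L^2(0,T)}$). One small bookkeeping point: the intermediate estimate $\| Nyu \|_{L^2(0,T;Y)} \leq M \| u \|_{L^2(0,T)} \| y \|_{W(0,T)}$ does not follow from the pointwise bound $\| y(t) \|_Y \leq \| y \|_{W(0,T)}$ alone, since $N \in \mathcal{L}(V,Y)$ would require $\| y(t) \|_V$ there; the correct route, which your displayed a priori bound already uses, is to invoke (A2) so that $N$ extends to $\mathcal{L}(Y,V^*)$ and estimate $\| Nyu \|_{L^2(0,T;V^*)} \leq \| N \|_{\mathcal{L}(Y,V^*)} \| y \|_{L^\infty(0,T;Y)} \| u \|_{L^2(0,T)}$, which is all the maximal-regularity estimate needs.
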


A proof can be found in \cite[Lemma 2.7]{BreKP18a} for the case $T= \infty$. The proof can be directly adapted to the case of finite horizons.
The next proposition addresses  the existence of a local solution for Problem \eqref{eq:Pb_One_step_RH}, assuming that $\| y_0 \|_Y$ is sufficiently small.

\begin{proposition} \label{prop:existence_one_step_pb}
There exist $\delta_4 > 0$ and $M>0$ such that for all $y_0 \in B_Y(\delta_4)$, Problem \eqref{eq:Pb_One_step_RH} has a local solution $(y_{T,k},u_{T,k})$ satisfying
\begin{equation} \label{eq:bound_sol_one_step}
\max \big( \| y_{T,k} \|_{W(0,T)}, \| u_{T,k} \|_{L^2(0,T)} \big) \leq M \| y_0 \|_Y.
\end{equation}
If $k=2$, then Problem \eqref{eq:Pb_One_step_RH} has a global solution satisfying the above estimate.
\end{proposition}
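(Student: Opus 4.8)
The plan is to prove existence of a local minimizer of Problem~\eqref{eq:Pb_One_step_RH} by the direct method, using Lemma~\ref{lemma:observer} to obtain a coercivity/a priori bound, and then to upgrade to a global solution in the case $k=2$ by exploiting the convexity of the problem. First I would fix a small radius $r>0$ (to be chosen) and consider the constrained problem of minimizing the cost functional of \eqref{eq:Pb_One_step_RH} over the set $\mathcal{A}_r = \{(y,u) \in W(0,T)\times L^2(0,T) : \dot y = Ay + Nyu + Bu,\ y(0)=y_0,\ \|u\|_{L^2(0,T)} \le r\}$. By Lemma~\ref{lemma:RegEstim} this set is nonempty (take $u=0$) and the cost is bounded below by $0$, so a minimizing sequence $(y_m,u_m)$ exists. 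Since $\|u_m\|_{L^2(0,T)}\le r$, a subsequence converges weakly in $L^2(0,T)$ to some $u^\ast$ with $\|u^\ast\|_{L^2(0,T)}\le r$; the corresponding states $y_m$ are bounded in $W(0,T)$ by Lemma~\ref{lemma:RegEstim}, hence (a further subsequence) converges weakly in $W(0,T)$ and, by compactness of $V\hookrightarrow Y$ and the Aubin--Lions lemma, strongly in $L^2(0,T;Y)$ and in $C([0,T],Y)$-ish topologies sufficient to pass to the limit in the bilinear term $Ny_m u_m$. One then identifies the limit as a solution of the state equation, so $(y^\ast,u^\ast)\in\mathcal{A}_r$; weak lower semicontinuity of the quadratic running cost and continuity of $y\mapsto\mathcal{V}_k(y(T))$ (using $y_m(T)\to y^\ast(T)$ in $Y$) give that $(y^\ast,u^\ast)$ minimizes the cost over $\mathcal{A}_r$.

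Next I would show that, for $\|y_0\|_Y$ small, the constraint $\|u\|_{L^2(0,T)}\le r$ is inactive, so that $(y^\ast,u^\ast)$ is in fact a \emph{local} minimizer of the unconstrained Problem~\eqref{eq:Pb_One_step_RH}, and moreover satisfies the bound \eqref{eq:bound_sol_one_step}. Comparing with the admissible pair $(S(0,y_0),0)$, whose cost is $\frac12\|CS(0,y_0)\|_{L^2(0,T;Z)}^2 + \mathcal{V}_k(S(0,y_0)(T)) \le M\|y_0\|_Y^2$ (using that $\mathcal{V}_k$ vanishes to second order at $0$ and continuity of the uncontrolled flow), I get $\frac{\alpha}{2}\|u^\ast\|_{L^2(0,T)}^2 \le \frac12\|CS(0,y_0)\|_{L^2}^2 + \mathcal{V}_k(S(0,y_0)(T)) + \big(\text{negative part of }\mathcal{V}_k(y^\ast(T))\big)$. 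Here one must control $\mathcal{V}_k(y^\ast(T))$ from below; since $\mathcal{V}_k(y)=\frac12\langle y,\Pi y\rangle + O(\|y\|_Y^3)$ with $\Pi\ge 0$, for $\|y^\ast(T)\|_Y$ small this is bounded below by $-M\|y^\ast(T)\|_Y^3$, and $\|y^\ast(T)\|_Y\le\|y^\ast\|_{W(0,T)}$. Feeding this into Lemma~\ref{lemma:observer} (applicable once $r\le\delta_3$), which gives $\|y^\ast\|_{W(0,T)}\le M(\|y_0\|_Y+\|u^\ast\|_{L^2(0,T)}+\|Cy^\ast\|_{L^2(0,T;Z)})$ and $\|Cy^\ast\|_{L^2}^2\le$ (twice the cost) $\le M\|y_0\|_Y^2$, a short bootstrap/absorption argument yields $\|u^\ast\|_{L^2(0,T)}^2 \le M\|y_0\|_Y^2$ and then $\|y^\ast\|_{W(0,T)}\le M\|y_0\|_Y$, with $M$ independent of $T$. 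Choosing $\delta_4$ small enough that $M\delta_4 < r$ makes the constraint inactive, giving \eqref{eq:bound_sol_one_step} and the local optimality claim.

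Finally, for $k=2$ the terminal cost $\mathcal{V}_2(y)=\frac12\langle y,\Pi y\rangle$ is a convex quadratic in $y$; the running cost is convex; and although the state equation is bilinear (hence the map $u\mapsto y$ is nonlinear), the standard trick applies: one shows the reduced functional is still coercive and weakly lower semicontinuous on all of $L^2(0,T)$, so the direct method runs globally without the artificial bound $\|u\|_{L^2}\le r$. Coercivity comes again from Lemma~\ref{lemma:observer}: for the minimizing sequence one may assume the cost is bounded, hence $\|Cy_m\|_{L^2(0,T;Z)}$ and, \emph{a priori}, $\|u_m\|_{L^2(0,T)}$ are bounded (the latter directly from the $\frac{\alpha}{2}\int u^2$ term), which is all that is needed; then the same weak-compactness and limit-passage argument as above produces a global minimizer, and the bound \eqref{eq:bound_sol_one_step} follows from the comparison argument exactly as before. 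I expect the main obstacle to be the absorption step controlling $\|u^\ast\|_{L^2}$ and $\|y^\ast\|_{W(0,T)}$ \emph{uniformly in $T$}: one must be careful that the lower bound on $\mathcal{V}_k(y^\ast(T))$ and the constant in Lemma~\ref{lemma:observer} do not degrade as $T\to\infty$, and that the cubic remainder term of $\mathcal{V}_k$ can indeed be absorbed into the left-hand side after restricting to $\|y_0\|_Y$ small — this is where the smallness of $\delta_4$ is really used.
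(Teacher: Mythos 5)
Your overall strategy (localize, run the direct method, show the localization constraint is inactive, and treat $k=2$ separately because $\mathcal{V}_2\ge 0$) is the same as the paper's, but two steps in your execution fail.

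First, your comparison pair is wrong. You bound the optimal value by the cost of $(S(0,y_0),0)$ and claim $\tfrac12\|CS(0,y_0)\|_{L^2(0,T;Z)}^2\le M\|y_0\|_Y^2$. This requires $M$ to be independent of $T$ (the whole point of the paper is the regime $T\to\infty$, and the proof must produce constants uniform in $T$), but $A$ is only assumed stabilizable, not stable: the uncontrolled semigroup $e^{At}$ may grow exponentially (it does in the paper's numerical example, where $A$ has the eigenvalue $0.5$), so $\|CS(0,y_0)\|_{L^2(0,T;Z)}$ blows up as $T\to\infty$. The paper instead compares with the restriction to $(0,T)$ of the optimal infinite-horizon pair $(\bar y,\bar u)$ from Proposition \ref{prop:existenceSol}, which satisfies $T$-uniform bounds $\|C\bar y\|_{L^2(0,\infty;Z)}^2+\|\bar u\|_{L^2(0,\infty)}^2\le M\|y_0\|_Y^2$ and $\|\bar y\|_{L^\infty(0,\infty;Y)}\le M\|y_0\|_Y$. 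Without this, neither the a priori bounds on the minimizing sequence nor the final estimate \eqref{eq:bound_sol_one_step} come out uniform in $T$.

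Second, and independently, your localization $\|u\|_{L^2(0,T)}\le r$ does not make the negative part of the terminal cost controllable, and the absorption step you describe is circular. To use $\mathcal{V}_k(y(T))\ge -M\|y(T)\|_Y^3$ you need an a priori bound on $\|y(T)\|_Y$ along the minimizing sequence; the only route to such a bound is Lemma \ref{lemma:observer}, which gives $\|y_m\|_{W(0,T)}\le M(\|y_0\|_Y+r+\|Cy_m\|_{L^2(0,T;Z)})$, where $\|Cy_m\|$ is precisely the quantity you are trying to bound from the cost. Feeding this back yields an inequality of the form $c^2\le M\|y_0\|_Y^2+M(\|y_0\|_Y+r+c)^3$ for $c=\|Cy_m\|_{L^2(0,T;Z)}$, which is satisfied by arbitrarily large $c$ (the cubic dominates), so no bound follows. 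This is exactly why the paper localizes with the final-state constraint $\|y(T)\|_Y\le\gamma\|y_0\|_Y$ rather than a control-norm constraint: it gives $\mathcal{V}_k(y(T))\ge -M\gamma^4\|y_0\|_Y^4$ directly, the cost then controls $\|u\|$ and $\|Cy\|$, and one verifies a posteriori (via Gronwall) that the state constraint is strictly inactive for a suitable fixed $\gamma$. For $k=2$ your fallback to $\mathcal{V}_2\ge 0$ and plain coercivity from the $\tfrac{\alpha}{2}\int u^2$ term matches the paper (the convexity remark is neither needed nor true for the reduced functional), but it still inherits the comparison-pair problem above.
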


\begin{proof}
Let us start with the case $k \geq 3$.
If $y_0= 0$, one can easily check that $(y_{T,k},u_{T,k})= (0,0)$ is a local solution to the problem.
From now on, we assume that $y_0 \neq 0$. Let us emphasize the fact that the constants $M_1$,...,$M_5$ introduced in this proof can all be chosen independently of $T$. The value of $\delta_4$ will be reduced along the proof, this can be done independently of $T$.

As a consequence of Proposition \ref{prop:existenceSol}, there exist $\delta_4 >0$ and $M_1$ such that for all $y_0 \in B_Y(\delta_4)$, Problem \eqref{eqProblem} with initial condition $y_0$ has a solution $\bar{u}$ with associated trajectory $\bar{y}$ satisfying
\begin{equation} \label{eq:bound_on_cost}
\frac{1}{2} \| C \bar{y} \|_{L^2(0,\infty;Z)}^2
+ \frac{\alpha}{2} \| \bar{u} \|_{L^2(0,\infty)}^2 \leq M_1 \| y_0 \|_Y^2, \quad
\| \bar{y} \|_{L^\infty(0,\infty;Y)} \leq M_1 \|y_0 \|_Y.
\end{equation}
We need to bound $\mathcal{V}_k$ from below. Observe that $\mathcal{V}_k$ need not be nonnegative.  Since it is a Taylor approximation of order 3 (at least), there exists a constant $M_2>0$ such that $|\mathcal{V}_k(y)-\mathcal{V}(y) | \leq M_2 \| y \|_Y^4$ for all $y \in B_Y(M_1 \delta_4)$, after possible reduction of $\delta_4$. Moreover, the value function $\mathcal{V}$ is non-negative, therefore
\begin{equation*}
\mathcal{V}_k(y) \geq \mathcal{V}(y) - |\mathcal{V}_k(y)-\mathcal{V}(y)| \geq - M_2 \| y \|_Y^4.
\end{equation*}
Increasing if necessary the value of $M_2$, we also have for all $y \in B_Y(M_1 \delta_4)$ the following upper estimate $\mathcal{V}_k(y) \leq M_2 \| y \|_Y^2$,
since $\mathcal{V}_k$ only contains terms of order 2 and more.

For a given $\gamma>0$, consider the following localized problem:
\begin{equation*} \label{eq:Pb_One_step_RH_loc} \tag{$P_{T,k,\gamma}$}
\begin{cases} \begin{array}{l}
{\displaystyle \inf_{(y,u) \in W(0,T) \times L^2(0,T)} \
J_{T,k}(y,u):= \frac{1}{2} \int_0^T \| Cy(t) \|_{Z}^2 \dd t + \frac{\alpha}{2} \int_0^T u(t)^2 \dd t + \mathcal{V}_k(y(T)),} \\[1em]
\qquad \text{subject to: } \dot{y}= Ay + Nyu + Bu, \quad y(0)= y_0, \\
\qquad \text{\phantom{subject to: }} \| y(T) \|_Y \leq \gamma \| y_0 \|_Y.
\end{array}
\end{cases}
\end{equation*}
Problem \eqref{eq:Pb_One_step_RH_loc} is similar to \eqref{eq:Pb_One_step_RH}, with the additional constraint: $\| y(T) \|_Y \leq \gamma \| y_0 \|_Y$. Our strategy now is the following: we prove the existence of a solution to \eqref{eq:Pb_One_step_RH_loc} such that the additional constraint is not active for an appropriately chosen value of $\gamma$. The obtained solution is then necessarily a local solution to \eqref{eq:Pb_One_step_RH}.

Let $\gamma \geq M_1$. For all $y_0 \in B_Y(\delta_4)$, the restriction to $(0,T)$ of the pair $(\bar{y},\bar{u})$ is feasible (for Problem \eqref{eq:Pb_One_step_RH_loc}), by \eqref{eq:bound_on_cost}.
Moreover, $\mathcal{V}_k(\bar{y}(T)) \leq M_2 \| \bar{y}(T) \|_Y^2 \leq M_1^2 M_2 \| y_0 \|_Y^2$, therefore
\begin{equation*}
J_{T,k}(\bar{y}_{|(0,T)},\bar{u}_{|(0,T)}) \leq (M_1 + M_1^2 M_2) \| y_0 \|_Y^2.
\end{equation*}
Consider now a minimizing sequence $(y_n,u_n)_{n \in \mathbb{N}}$ for \eqref{eq:Pb_One_step_RH_loc}. We can assume that for all $n \in \mathbb{N}$,
\begin{equation*}
J_{T,k}(y_n,u_n) \leq (M_1 + M_1^2 M_2) \| y_0 \|_Y^2.
\end{equation*}
Using the lower bound of $\mathcal{V}_k$, we obtain that for all $n \in \mathbb{N}$,
\begin{equation*}
\frac{1}{2} \| Cy_n \|_{L^2(0,T;Z)}^2 + \frac{\alpha}{2} \| u_n \|_{L^2(0,T)}^2
\leq (M_1 + M_1^2 M_2) \| y_0 \|_Y^2 + M_2 \gamma^4 \| y_0 \|_Y^4.
\end{equation*}
Therefore, there exists a constant $M_3>0$, independent of $T$ and $\gamma$, such that for all $n \in \mathbb{N}$,
\begin{equation*}
\| u_n \|_{L^2(0,T)} \leq M_3 \big( \|y_0 \|_Y + \gamma^2 \| y_0 \|_Y^2 \big)
\leq M_3 \big( \delta_4 + \gamma^2 \delta_4^2 \big).
\end{equation*}
Let us reduce the value of $\delta_4$, if necessary, so that $M_3 \big( \delta_4 + \gamma^2 \delta_4^2 \big) \leq \delta_3$.
Thus, for all $n \in \mathbb{N}$, $\| u_n \|_{L^2(0,T)} \leq \delta_3$. Applying Lemma \ref{lemma:observer}, we obtain that there exists a constant $M_4$, independent of $T$ and $\gamma$, such that for all $n \in \mathbb{N}$,
\begin{equation*}
\| y_n \|_{W(0,T)} \leq M_4 \big( \|y_0 \|_Y + \gamma^2 \| y_0 \|_Y^2 \big).
\end{equation*}
Thus, the sequence $(y_n,u_n)_{n \in \mathbb{N}}$ is bounded in $W(0,T) \times L^2(0,T)$. Using the techniques of \cite[Proposition 2]{BreKP17b}, one can show that all limit points $(y,u)$ of the sequence (there exists at least one) are solutions to Problem \eqref{eq:Pb_One_step_RH_loc} and satisfy:
\begin{equation} \label{eq:bound_uy}
\begin{cases}
\begin{array}{l}
\| u \|_{L^2(0,T)} \leq M_3 \big( \|y_0 \|_Y + \gamma^2 \| y_0 \|_Y^2 \big), \\
\| y \|_{L^2(0,T;V)} \leq M_4 \big( \|y_0 \|_Y + \gamma^2 \| y_0 \|_Y^2 \big), \\
\| \dot{y} \|_{L^2(0,T;V^*)} \leq M_4 \big( \|y_0 \|_Y + \gamma^2 \| y_0 \|_Y^2 \big).
\end{array}
\end{cases}
\end{equation}
We need to find an estimate on $\| y \|_{L^\infty(0,T;Y)}$. As usual, this is achieved by multiplying the state equation by $y$, estimating the right-hand side with Young's inequality and then applying Gronwall's lemma. Following the first steps of the proof of \cite[Lemma 1]{BreKP17b}, we obtain the existence of a constant $M> 0$ (independent of $t$ and $T$) such that
\begin{equation*}
\frac{\dd}{\dd t} \| y(t) \|_Y^2
\leq M \big( \| y(t) \|_Y^2 + |u(t)|^2 + \| y \|_Y^2 |u(t)|^2  \big), \quad \forall t \in [0,T].
\end{equation*}
Applying Gronwall's lemma, we obtain that
\begin{equation*}
\| y(t) \|_Y^2 \leq \Big( \| y(0) \|_Y^2 + \int_0^t M \| y(s) \|_Y^2 + M |u(s)|^2 \dd s \Big) e^{M \int_0^t |u(s)|^2 \dd t}.
\end{equation*}
We already have a bound on $\| y_0 \|_Y$. Therefore, by \eqref{eq:bound_uy}, $\| u \|_{L^2(0,T)}$ is bounded and thus the exponential term in the above inequality is bounded. Using again \eqref{eq:bound_uy}, we obtain that there exists a constant $M_5$ (independent of $T$ and $\gamma$) such that
\begin{equation} \label{eq:estima_final_state}
\| y(t) \|_Y \leq M_5 \big( \| y_0 \|_Y + \gamma^2 \| y_0 \|_Y^2 \big)
\leq M_5 \big(1 + \gamma^2 \delta_4 \big) \| y_0 \|_Y, \quad \forall t \in [0,T].
\end{equation}
Now, we fix $\gamma = \max( 2M_5, M_1)$ and reduce the value of $\delta_4$, if necessary, so that $\gamma^2 \delta_4 \leq \frac{1}{2}$.
It follows from \eqref{eq:estima_final_state} that
\begin{equation*}
\| y(T) \|_Y \leq \frac{3}{2} M_5 \| y_0 \|_Y < 2 M_5 \| y_0 \|_Y \leq \gamma \| y_0 \|_Y.
\end{equation*}
This proves that the final-state constraint is not active, therefore, $(y,u)$ is also a local solution to \eqref{eq:Pb_One_step_RH}. Moreover, \eqref{eq:bound_uy}, \eqref{eq:estima_final_state} and the inequality $\gamma^2 \delta_4 \leq \frac{1}{2}$ together yield
\begin{equation*}
\| u \|_{L^2(0,T)} \leq \frac{3}{2} M_3 \| y_0 \|_Y \quad \text{and} \quad
\| y \|_{W(0,T)} \leq \frac{3}{2} \max( M_4, M_5) \| y_0 \|_Y,
\end{equation*}
which concludes the proof, for $k \geq 3$.

The proof is quite similar for $k=2$, therefore we only give the main lines. The main difference is that it is not  necessary anymore to localize the problem with an a-priori final-state constraint, since $\mathcal{V}_2 \geq 0$. As before, one can show that there exists a constant $M > 0$ such that for $\| y_0 \|_Y$ sufficiently small,
$J_{T,k}(\bar{y}_{|(0,T)},\bar{u}_{|(0,T)}) \leq M \| y_0 \|_Y^2$.
Therefore, there exists a minimizing sequence $(y_n,u_n)$ (now directly for Problem \ref{eq:Pb_One_step_RH}) such that
\begin{equation*}
\frac{1}{2} \| Cy_n \|_{L^2(0,T;Z)}^2 + \frac{\alpha}{2} \| u_n \|_{L^2(0,T)}^2
\leq J_{T,k}(y_n,u_n)
\leq M \| y_0 \|_Y^2.
\end{equation*}
Applying Lemma \ref{lemma:observer}, we deduce that $(y_n,u_n)$ is bounded in $W(0,T) \times L^2(0,T)$. We show then that any weak limit point (there exists at least one) is a global solution to the problem and satisfies estimate \eqref{eq:bound_sol_one_step}.
\end{proof}

\begin{lemma} \label{lemma:oc_for_one_step}
Let $\delta_4$ and $M>0$ be given by Proposition \ref{prop:existence_one_step_pb}. There exists $\delta_5 \in (0,\delta_4]$ and $M'>0$ such that for all $y_0 \in B_Y(\delta_5)$ and for all local solutions $(y,u)$ to Problem \eqref{eq:Pb_One_step_RH} satisfying the bound \eqref{eq:bound_sol_one_step}, there exists a unique costate $p \in W(0,T)$, satisfying
\begin{equation} \label{eq:oc_for_one_step_pb}
\begin{cases} \begin{array}{rl}
-\dot{p} - (A + uN)^* p - C^*Cy = \ & \! \! 0, \\
p(T)-D\mathcal{V}_k(y(T)) = \ & \! \! 0, \\
\alpha u + \langle p, Ny + B \rangle_Y = \ & \! \! 0,
\end{array}
\end{cases}
\end{equation}
and  the following bound: $\| p \|_{W(0,T)} \leq M' \| y_0 \|_Y$.
\end{lemma}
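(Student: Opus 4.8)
The plan is to produce the costate by the classical reduced‑gradient (adjoint‑state) calculus and then to read the bound $\|p\|_{W(0,T)}\leq M'\|y_0\|_Y$ off Proposition~\ref{proposition:non_reg_os} applied with $Q=0$. More precisely: given the local solution $(y,u)$, I will build $p$ as the unique solution of the backward adjoint equation with the prescribed terminal condition, then check via the reduced gradient that the remaining optimality relation holds automatically because $(y,u)$ is optimal; finally I will observe that $(y,u,p)$ solves the linear optimality system \eqref{eq:non_reg_os} with $Q=0$, which makes the $T$‑uniform estimate of Proposition~\ref{proposition:non_reg_os} directly applicable.

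Since the non‑linearity in the state equation is bilinear, the control‑to‑state map $v\in L^2(0,T)\mapsto S(v,y_0)\in W(0,T)$ is continuously (in fact analytically) differentiable --- a finite‑horizon counterpart of the differentiability results of \cite{BreKP18a} --- and $\mathcal{V}_k$ is a polynomial on $Y$, hence $C^\infty$, with $D\mathcal{V}_k(0)=0$ and $\|D\mathcal{V}_k(z)\|_Y\leq C\|z\|_Y$ for $z$ in a bounded set. Define $p$ as the solution of the linear backward Cauchy problem made of the first two lines of \eqref{eq:oc_for_one_step_pb}, that is, $-\dot p-(A+uN)^*p=C^*Cy$ with terminal datum $p(T)=D\mathcal{V}_k(y(T))$; after the change of time $t\mapsto T-t$ this is a linear parabolic equation with an $L^\infty$‑in‑space, $L^2$‑in‑time lower‑order coefficient and an $L^2(0,T;V^*)$ source, so it admits a unique solution $p\in W(0,T)$ by the argument of Lemma~\ref{lemma:RegEstim}. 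A standard integration by parts --- using this adjoint equation together with $z(0)=0$, where $z$ solves the linearized state equation $\dot z=Az+Nzu+(Ny+B)\delta u$ --- shows that the reduced cost
\begin{equation*}
j_T(v):=\frac{1}{2}\int_0^T\|CS(v,y_0;t)\|_Z^2\dd t+\frac{\alpha}{2}\int_0^T v(t)^2\dd t+\mathcal{V}_k\big(S(v,y_0;T)\big)
\end{equation*}
satisfies $j_T'(u)\delta u=\int_0^T\big(\alpha u(t)+\langle p(t),Ny(t)+B\rangle_Y\big)\delta u(t)\dd t$ for all $\delta u\in L^2(0,T)$. As $(y,u)$ is a local minimizer of \eqref{eq:Pb_One_step_RH} and $\|u\|_{L^2(0,T)}$ is small, $u$ is a local minimizer of $j_T$, hence $j_T'(u)=0$, which is precisely the third line of \eqref{eq:oc_for_one_step_pb}. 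Uniqueness of $p$ is immediate, since its first two lines already determine it.

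For the a priori bound, observe that $(y,u,p)$ solves the optimality system \eqref{eq:non_reg_os} with $Q=0$ and right‑hand side $(y_0,f,g,h,q)$, where $f=Nyu$, $g=uN^*p$, $h=\langle Ny,p\rangle$, and $q=D\mathcal{V}_k(y(T))$. Using Assumption~(A2) (equivalently $N\in\mathcal{L}(Y,V^*)$) and the boundedness of $N$, $N^*$ between $V$, $Y$, $V^*$, the embedding $W(0,T)\hookrightarrow C([0,T];Y)$, and the bound \eqref{eq:bound_sol_one_step}, one obtains, with constants independent of $T$, the estimates $\|f\|_{L^2(0,T;V^*)}\leq C\|y\|_{W(0,T)}\|u\|_{L^2(0,T)}\leq CM^2\|y_0\|_Y^2$; $\max\big(\|g\|_{L^2(0,T;V^*)},\|h\|_{L^2(0,T)}\big)\leq C\big(\|u\|_{L^2(0,T)}+\|y\|_{W(0,T)}\big)\|p\|_{W(0,T)}\leq CM\|y_0\|_Y\|p\|_{W(0,T)}$; and $\|q\|_Y=\|D\mathcal{V}_k(y(T))\|_Y\leq C\|y(T)\|_Y\leq CM\|y_0\|_Y$. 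Hence $\|(y_0,f,g,h,q)\|_{\Upsilon_{T,0}}\leq C\big(\|y_0\|_Y+\|y_0\|_Y\|p\|_{W(0,T)}\big)$, and Proposition~\ref{proposition:non_reg_os} with $\mu=0$ gives
\begin{equation*}
\|p\|_{W(0,T)}\leq\|(y,u,p)\|_{\Lambda_{T,0}}\leq C_1\|y_0\|_Y+C_2\|y_0\|_Y\|p\|_{W(0,T)},
\end{equation*}
with $C_1,C_2$ independent of $T$. Taking $\delta_5\in(0,\delta_4]$ small enough that $C_2\delta_5\leq\frac{1}{2}$ absorbs the last term into the left‑hand side and yields $\|p\|_{W(0,T)}\leq 2C_1\|y_0\|_Y=:M'\|y_0\|_Y$, with $M'$ independent of $T$. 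The only genuinely delicate points are the $T$‑uniformity of the estimate --- exactly what Proposition~\ref{proposition:non_reg_os} is built to provide --- and the apparent circularity caused by the adjoint source $g=uN^*p$ depending on $p$ itself; this is harmless because $\|u\|_{L^2(0,T)}=\mathcal{O}(\|y_0\|_Y)$ by \eqref{eq:bound_sol_one_step}, so the $p$‑dependent contribution can be absorbed once $\delta_5$ is small. Some bookkeeping with the mapping properties of $N$ and $N^*$ between $V$, $Y$, and $V^*$ (all following from Assumption~(A2)) is also needed in the estimates of $f$, $g$, and $h$.
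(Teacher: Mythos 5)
Your proof is correct and follows essentially the same route as the paper: define $p$ by the backward adjoint equation with terminal datum $D\mathcal{V}_k(y(T))$, obtain the third optimality relation by the reduced-gradient/integration-by-parts argument, rewrite $(y,u,p)$ as a solution of the linear system \eqref{eq:non_reg_os}, and absorb the $p$-dependent right-hand sides using the $T$-uniform bound of Proposition~\ref{proposition:non_reg_os} together with the smallness of $\|y_0\|_Y$. The only (immaterial) differences are that you take $Q=0$ with $q=D\mathcal{V}_k(y(T))$ where the paper takes $Q=\Pi$ with $q=D\mathcal{V}_k(y(T))-\Pi y(T)$, and that you correctly carry the term $f=Nyu$, which the paper's proof records as $f=0$ (apparently a typo there).
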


\begin{proof}
The costate $p$ is uniquely defined by the first two lines of \eqref{eq:oc_for_one_step_pb}. The well-posedness of this adjoint equation can be studied with the same methods as those used for Lemma \ref{lemma:RegEstim} (see the details of the proof in \cite[Lemma 1]{BreKP17b}). A classical calculation, based on an integration by parts, allows to show the third relation. It follows that the triplet $(y,u,p)$ is the solution to the linear system \eqref{eq:non_reg_os}, where
\begin{equation*}
(f,g,h,q)= (0,uN^*p, \langle Ny,p \rangle_Y,D\mathcal{V}_k(y(T)) - \Pi y(T))
\end{equation*}
and $Q= \Pi$.
We have
\begin{align*}
\| g \|_{L^2(0,T;V^*)} \leq \ & \| u \|_{L^2(0,T)} \| N \|_{\mathcal{L}(Y,V^*)} \| p \|_{L^\infty(0,T;Y)}
\leq M \| y_0 \|_Y \| p \|_{W(0,T)}, \\
\| h \|_{L^2(0,T)} \leq \ & \| N \|_{\mathcal{L}(V,Y)} \| y \|_{L^2(0,T;V)} \| p \|_{L^\infty(0,T;Y)}
\leq M \| y_0 \|_Y \| p \|_{W(0,T)}, \\
\| q \|_Y \leq \ & M \| y(T) \|_Y, \text{ and } \| q \|_Y \leq M \| y(0) \|_Y.
\end{align*}
Therefore, there exists a constant $M_1 > 0$, independent of $T$, such that
\begin{equation*}
\| (y_0,f,g,h,q) \|_{\Upsilon_{T,0}} \leq M_1 \| y_0 \|_Y (1 + \| p \|_{W(0,T)}).
\end{equation*}
Let us denote by $M_2$ the constant involved in \eqref{eq:non_reg_os}. We obtain with Proposition \ref{proposition:non_reg_os} that
\begin{equation*}
\| p \|_{W(0,T)} \leq M_1 M_2 \| y_0 \|_Y + M_1 M_2 \| y_0 \|_Y \| p \|_{W(0,T)}.
\end{equation*}
The announced bound on $p$ follows, taking $\delta_5= \min (\delta_4, (2M_1M_2)^{-1})$ and $M'= 2M_1M_2$.
\end{proof}

We are now ready to prove an estimate for $\| (y_{T,k},u_{T,k},p_{T,k}) - (\bar{y},\bar{u},\bar{p}) \|$, by ``comparing" the associated optimality conditions and applying the inverse function theorem.

\begin{proposition} \label{prop:estimate_one_step}
There exist $\delta_6 \in (0,\delta_5]$, $\delta_6' > 0$, and $M > 0$ such that for all $y_0 \in B_Y(\delta_6)$, Problem \eqref{eq:Pb_One_step_RH} has a unique local solution $(y_{T,k},u_{T,k})$ with associated costate $p_{T,k}$ satisfying
\begin{equation*}
\| (y_{T,k},u_{T,k},p_{T,k}) \|_{\Lambda_{T,0}}
\leq \delta_6'.
\end{equation*}
Moreover,
\begin{equation} \label{eq:estimate_rhc}
\begin{cases}
\begin{array}{rl}
\| (y_{T,k},u_{T,k},p_{T,k}) - (\bar{y},\bar{u},\bar{p}) \|_{\Lambda_{T,0}}
\leq & \! \! \! M \| \bar{y}(T) \|_Y^k, \\
\| (y_{T,k},u_{T,k},p_{T,k}) - (\bar{y},\bar{u},\bar{p}) \|_{\Lambda_{T,-\lambda}} \leq & \! \! \! M e^{-\lambda T} \| \bar{y}(T) \|_Y^k,
\end{array}
\end{cases}
\end{equation}
where $\bar{y}$, $\bar{u}$, and $\bar{p}$ are the restrictions of $\mathcal{Y}_1(y_0)$, $\mathcal{U}_1(y_0)$, and $\mathcal{P}_1(y_0)$ to $(0,T)$.
\end{proposition}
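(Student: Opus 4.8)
The plan is to exploit the fact that $(y_{T,k},u_{T,k},p_{T,k})$ and the restriction to $(0,T)$ of the infinite-horizon optimal triple, $(\bar y,\bar u,\bar p):=(\mathcal{Y}_1,\mathcal{U}_1,\mathcal{P}_1)(y_0)$, solve the \emph{same} first-order system except for one equation, and to turn the small discrepancy into the error estimate via Proposition \ref{proposition:non_reg_os}. Indeed, by Lemma \ref{lemma:oc_for_one_step} the first and third lines of \eqref{eq:oc_for_one_step_pb} hold for the finite-horizon triple, and these two identities, together with the state equation, hold for $(\bar y,\bar u,\bar p)$ on $(0,T)$ by Proposition \ref{prop:basic_oc}. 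The sole discrepancy is the terminal costate condition: $p_{T,k}(T)=D\mathcal{V}_k(y_{T,k}(T))$, whereas the sensitivity relation (Lemma \ref{lemma:sensitivity_relation}) gives $\bar p(T)=D\mathcal{V}(\bar y(T))$. As $D\mathcal{V}_k$ is precisely the order-$(k-1)$ Taylor expansion of $D\mathcal{V}$ around $0$ and $D\mathcal{V}$ is smooth near $0$, one has $\|D\mathcal{V}(y)-D\mathcal{V}_k(y)\|_Y\le M\|y\|_Y^k$ for $y$ small (cf.\ Remark \ref{rem:th1}), so the ``defect'' between the two optimality systems is of order $\|\bar y(T)\|_Y^k$. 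The plan is therefore to express the difference $\delta:=(y_{T,k},u_{T,k},p_{T,k})-(\bar y,\bar u,\bar p)$ as the solution of a \emph{linear} problem of the form \eqref{eq:non_reg_os} with a small right-hand side, and to close the estimate with Proposition \ref{proposition:non_reg_os}; this is a quantitative inverse-function-theorem argument in disguise.

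Concretely, by Proposition \ref{prop:existence_one_step_pb}, Lemma \ref{lemma:oc_for_one_step} and Proposition \ref{prop:existenceSol}, for $\|y_0\|_Y$ small (uniformly in $T$) both triples exist, satisfy their optimality systems, and are bounded by $M\|y_0\|_Y$ in $\Lambda_{T,0}$; in particular $\|\bar y(T)\|_Y\le M\|y_0\|_Y$ is arbitrarily small. Subtracting the two optimality systems shows that $\delta$ solves \eqref{eq:non_reg_os} with $Q=\Pi$, zero $y_0$-component (since $\delta_y(0)=0$), and right-hand side $(0,\tilde f,\tilde g,\tilde h,\tilde q)$ consisting of second-order remainders: $\tilde f,\tilde g,\tilde h$ are bilinear, each a product of one factor from $\delta$ and one from $\{u_{T,k},\bar u,\bar y,p_{T,k},\bar p\}$, so that, using $W_\mu(0,T)\hookrightarrow L_\mu^\infty(0,T;Y)$ and Hölder's inequality, $\max(\|\tilde f\|_{L_\mu^2(0,T;V^*)},\|\tilde g\|_{L_\mu^2(0,T;V^*)},\|\tilde h\|_{L_\mu^2(0,T)})\le CM\|y_0\|_Y\,\|\delta\|_{\Lambda_{T,\mu}}$ for $\mu\in\{0,-\lambda\}$, with $C$ independent of $T$; and the terminal remainder splits as
\begin{equation*}
\tilde q=\big(D\mathcal{V}_k(y_{T,k}(T))-D\mathcal{V}_k(\bar y(T))-\Pi\delta_y(T)\big)+\big(D\mathcal{V}_k(\bar y(T))-D\mathcal{V}(\bar y(T))\big),
\end{equation*}
whose $Y$-norms are bounded respectively by $CM\|y_0\|_Y\|\delta_y(T)\|_Y$ (since $D\mathcal{V}_k$ is smooth with $D^2\mathcal{V}_k(0)=\Pi$, the first summand is $O(\max(\|y_{T,k}(T)\|_Y,\|\bar y(T)\|_Y)\,\|\delta_y(T)\|_Y)$) and $C\|\bar y(T)\|_Y^k$ (Taylor remainder). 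Because $q$ enters $\Upsilon_{T,\mu}$ with the weight $e^{\mu T}$ and $e^{\mu T}\|\delta_y(T)\|_Y\le\|\delta_y\|_{W_\mu(0,T)}\le\|\delta\|_{\Lambda_{T,\mu}}$, we get $e^{\mu T}\|\tilde q\|_Y\le CM\|y_0\|_Y\|\delta\|_{\Lambda_{T,\mu}}+Ce^{\mu T}\|\bar y(T)\|_Y^k$. Proposition \ref{proposition:non_reg_os} with $\mathcal{Q}=\{\Pi\}$ then yields, for $\mu\in\{0,-\lambda\}$ and $M_2$ the constant therein,
\begin{equation*}
\|\delta\|_{\Lambda_{T,\mu}}\le M_2\big(CM\|y_0\|_Y\|\delta\|_{\Lambda_{T,\mu}}+Ce^{\mu T}\|\bar y(T)\|_Y^k\big);
\end{equation*}
choosing $\delta_6>0$ small enough, independently of $T$, that $M_2CM\delta_6\le\tfrac12$, the first term is absorbed for $\|y_0\|_Y\le\delta_6$, which gives \eqref{eq:estimate_rhc} ($\mu=0$ for the first line, $\mu=-\lambda$ for the second).

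Uniqueness is obtained the same way: two local solutions with associated costates, both in the $\Lambda_{T,0}$-ball of radius $\delta_6'$, have a difference solving \eqref{eq:non_reg_os} with $Q=\Pi$, zero $y_0$-component, and a right-hand side made only of the second-order remainders above, now of $\Upsilon_{T,0}$-norm at most $C\delta_6'\|(\mathrm{difference})\|_{\Lambda_{T,0}}$, which Proposition \ref{proposition:non_reg_os} forces to vanish once $\delta_6'$ is small enough that $M_2C\delta_6'<1$ (and at least large enough to contain the triple constructed above, which is compatible after reducing $\delta_6$). The main difficulty throughout is the $T$-uniformity of the constants, and it is most delicate for the weighted estimate, because $(y,u,p)\mapsto D\mathcal{V}_k(y(T))$ is \emph{not} uniformly Lipschitz on $\Lambda_{T,-\lambda}$, so a direct application of the inverse mapping theorem in the weighted space would fail. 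The resolution, and the crux of the argument, is to keep the a priori bounds in the unweighted space $\Lambda_{T,0}$, where $\|y(T)\|_Y$ is genuinely small so that $D^2\mathcal{V}_k(y(T))$ stays uniformly bounded, and to pass to the weighted norm only in the linear system for $\delta$, where the weight $e^{\mu T}$ on the $q$-component of $\Upsilon_{T,\mu}$ exactly absorbs the factor $\|\delta_y(T)\|_Y$; the $T$-independence of $M_2$ in Proposition \ref{proposition:non_reg_os}, itself rooted in the exponential stability of $A_\pi$, then closes the argument.
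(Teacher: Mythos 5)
Your proof is correct, and it rests on exactly the same pillars as the paper's: the uniform-in-$T$ invertibility of the linearized optimality system (Proposition \ref{proposition:non_reg_os}), the Taylor-remainder bound $\|D\mathcal{V}_k(y)-D\mathcal{V}(y)\|_Y\le M\|y\|_Y^k$ combined with the sensitivity relation $\bar p(T)=D\mathcal{V}(\bar y(T))$ from Lemma \ref{lemma:sensitivity_relation}, and, crucially, the observation that the bilinear and terminal remainders are Lipschitz in the weighted norm only with a modulus controlled by the \emph{unweighted} $\Lambda_{T,0}$-size of the base points (the paper's estimate \eqref{eq_strange_lip_cont}), which you correctly single out as the crux of the $T$-uniformity. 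The packaging differs: the paper feeds these ingredients into the two-norm inverse mapping theorem (Theorem \ref{thmInvThm}) applied to the map $\Phi_2$, obtains Lipschitz solution maps $(\mathcal{Y}_2,\mathcal{U}_2,\mathcal{P}_2)(y_0,q)$, and reads off \eqref{eq:estimate_rhc} by evaluating them at $q=0$ and at $q=D\mathcal{V}_k(\bar y(T))-D\mathcal{V}(\bar y(T))$; you instead subtract the two optimality systems directly, cast the difference as a solution of \eqref{eq:non_reg_os} with a quadratically small right-hand side, and close by absorption --- which is precisely the computation hidden inside the Lipschitz-continuity step of Theorem \ref{thmInvThm}. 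Your route is more elementary and makes the origin of each error term explicit, at the price of having to re-establish uniqueness by a separate contraction argument (which you do) and of not producing the solution maps $(\mathcal{Y}_2,\mathcal{U}_2,\mathcal{P}_2)$ that the paper reuses later, in Proposition \ref{prop:other_estim_one_step} and in the proof of Theorem \ref{thm:main}.
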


\begin{proof}
\emph{Step 1:} construction of $\Phi_2$ and application of the inverse mapping theorem.\\
Consider the mapping $\Phi_2$, defined as follows:
\begin{equation} \label{eq:defPhi2}
\Phi_2 \colon
(y,u,p) \in \Lambda_{T,0} \mapsto
\begin{pmatrix}
y(0) \\ \dot{y}- (Ay + (Ny+B)u) \\
- \dot{p} - A^*p - u N^* p - C^*C y \\
\alpha u + \langle Ny+ B,p \rangle_Y \\
p(T)- D\mathcal{V}_k(y(T))
\end{pmatrix}
\in \Upsilon_{T,0}.
\end{equation}
The reader can check that the mapping $\Phi_2$, considered from $\Lambda_{T,0}$ to $\Upsilon_{T,0}$ is differentiable, with a Lipschitz-continuous derivative, in a neighborhood of $(0,0,0)$. The size of the neighborhood and the Lipschitz-modulus can be both chosen independently of $T$. One can also prove that the mapping $\Phi_2$, considered from $\Lambda_{T,-\lambda}$ to $\Upsilon_{T,-\lambda}$ is differentiable and that there exist $\delta > 0$ and $M >0$ such that for all $(y,u,p)$ and $(\tilde{y},\tilde{u},\tilde{p}) \in B_{\Lambda_{T,0}}(\delta)$,
\begin{equation} \label{eq_strange_lip_cont}
\| D \Phi_2(\tilde{y},\tilde{u},\tilde{p}) - D \Phi_2(y,u,p) \|_{\mathcal{L}(\Lambda_{T,-\lambda};\Upsilon_{T,-\lambda})}
\leq M \| (\tilde{y},\tilde{u},\tilde{p}) - (y,u,p) \|_{\Lambda_{T,0}}.
\end{equation}
Some elements of proof concerning the regularity of $\Phi_2$ are given in the Appendix. See also Remarks \ref{remark:19} and \ref{remark:why_new_ift} on the necessity to apply the extension of the implicit function theorem given in Theorem \ref{thmInvThm}.
By Proposition \ref{proposition:non_reg_os}, the derivative $D\Phi_2(0,0,0)$, seen as an element of $\mathcal{L}(\Lambda_{T,0};\Upsilon_{T,0})$ and of $\mathcal{L}(\Lambda_{T,-\lambda};\Upsilon_{T,-\lambda})$,
has a bounded inverse. Moreover there exists $M>0$ such that
\begin{equation*}
\| D\Phi_2(0,0,0)^{-1} \|_{\mathcal{L}(\Upsilon_{T,0};\Lambda_{T,0})} \leq M \quad \text{and} \quad
\| D\Phi_2(0,0,0)^{-1} \|_{\mathcal{L}(\Upsilon_{T,-\lambda};\Lambda_{T,-\lambda})} \leq M.
\end{equation*}
Therefore, by the inverse mapping theorem, there exist $\delta_6 > 0$, $\delta_6' >0$, $M>0$ (all independent of $T$), and three mappings
\begin{equation*}
(y_0,q) \in B_Y(\delta_6)^2 \mapsto (\mathcal{Y}_2,\mathcal{U}_2,\mathcal{P}_2)(y_0,q)
\in \Lambda_{T,0}
\end{equation*}
such that for all $(y_0,q) \in B_Y(\delta_6)^2$, the triplet $(\mathcal{Y}_2,\mathcal{U}_2,\mathcal{P}_2)(y_0,q)$ is the unique solution to
\begin{equation} \label{eq:nonlin_eq_phi2}
\Phi_2(y,u,p) = (y_0,0,0,0,q), \quad
\| (y,u,p) \|_{\Lambda_{T,0}} \leq \delta_6'.
\end{equation}
The mappings $\mathcal{Y}_2$, $\mathcal{U}_2$, and $\mathcal{P}_2$ are Lipschitz-continuous in the following sense: for all $(y_0,q) \in B_Y(\delta_6)^2$ and $(\tilde{y}_0,\tilde{p}) \in B_Y(\delta_6)^2$,
\begin{equation} \label{eq_lipschA}
\begin{cases}
\begin{array}{rl}
\| (\mathcal{Y}_2,\mathcal{U}_2,\mathcal{P}_2)(\tilde{y}_0,\tilde{q})
- (\mathcal{Y}_2,\mathcal{U}_2,\mathcal{P}_2)(y_0,q) \|_{\Lambda_{T,0}}
\leq & \! \! \! \max \big( \| \tilde{y}_0 - y_0 \|_Y, \| \tilde{q} - q \|_Y \big) \\
\| (\mathcal{Y}_2,\mathcal{U}_2,\mathcal{P}_2)(\tilde{y}_0,\tilde{q})
- (\mathcal{Y}_2,\mathcal{U}_2,\mathcal{P}_2)(y_0,q) \|_{\Lambda_{T,-\lambda}}
\leq & \! \! \! \max \big( \| \tilde{y}_0 - y_0 \|_Y, e^{-\lambda T} \| \tilde{q} - q \|_Y \big).
\end{array}
\end{cases}
\end{equation}
The weight $e^{-\lambda T}$ comes here from the weight used in front of the variable $q$ in the definition of $\Upsilon_{T,-\lambda}$.
All along the proof, the value of $\delta_6$ is reduced. Let us emphasize the fact that the new values of $\delta_6$ can all be chosen independently of $T$.

\emph{Step 2:} characterization of $(y_{T,k},u_{T,k},p_{T,k})$.\\
By $M_1$ and $M_2$  we denote the constants involved in Proposition \ref{prop:existence_one_step_pb} and Lemma \ref{lemma:oc_for_one_step}, respectively. Let us reduce the value of $\delta_6$, if necessary, so that $\delta_6 \leq \min(\delta_5, \delta_6'/M_1, \delta_6'/M_2)$. For all $y_0 \in B_Y(\delta_6)$, there exists a solution $(y_{T,k},u_{T,k})$ to Problem \eqref{eq:Pb_One_step_RH} with associated costate $p_{T,k}$ such that
\begin{equation*}
\| (y_{T,k},u_{T,k},p_{T,k}) \|_{\Lambda_{T,0}} \leq \max(M_1,M_2) \delta_6 \leq \delta_6'.
\end{equation*}
Moreover $\Phi_2(y_{T,k},u_{T,k},p_{T,k})= (y_0,0,0,0,0)$  by Lemma \ref{lemma:oc_for_one_step}. This proves that $(y_{T,k},u_{T,k},p_{T,k})$ is the unique solution to \eqref{eq:nonlin_eq_phi2} and therefore that
\begin{equation} \label{eq_charac1}
(y_{T,k},u_{T,k},p_{T,k})= (\mathcal{Y}_2,\mathcal{U}_2,\mathcal{P}_2)(y_0,0).
\end{equation}
This also proves the (local) uniqueness of local solutions to \eqref{eq:Pb_One_step_RH}.

\emph{Step 3:} characterization of $(\bar{y},\bar{u},\bar{p})$.\\
The polynomial function $\mathcal{V}_k$ is a Taylor approximation of order $k$ of the value function. Therefore, $D\mathcal{V}_k$ is a Taylor approximation of order $k-1$ of $D\mathcal{V}$. As a consequence, there exist $M_3$ and $\delta > 0$ such that for all $y \in B_Y(\delta)$,
\begin{equation} \label{eq:taylor_k}
\| D \mathcal{V}_k(y)- D \mathcal{V}(y) \|_Y \leq M_3 \| y \|_Y^k.
\end{equation}
If necessary, we reduce $\delta$ so that $M_3 \delta^k \leq \delta_6'$.
We reduce then the value of $\delta_6$, if necessary, so that $\delta_6 \leq \delta_2$ and so that  $\| \mathcal{Y}_1(y_0) \|_{L^\infty(0,\infty;Y)} \leq \delta$ for all $y_0 \in B_Y(\delta_6)$.
Let $y_0 \in B_Y(\delta_6)$. Let us denote by $\bar{y}$, $\bar{u}$ and $\bar{p}$ the restrictions to $(0,T)$ of $\mathcal{Y}_1(y_0)$, $\mathcal{U}_1(y_0)$, and $\mathcal{P}_1(y_0)$.
As a consequence of Lemma \ref{lemma:sensitivity_relation}, we have
\begin{equation*}
\Phi_2(\bar{y},\bar{u},\bar{p})
= (y_0,0,0,0,q),
\end{equation*}
with $q= D\mathcal{V}_k(\bar{y}(T))- D\mathcal{V}(\bar{y}(T))$. By \eqref{eq:taylor_k}, we have $\| q \|_Y \leq M_3 \| \bar{y}(T) \|^k_Y \leq M_3 \delta^k \leq \delta_6'$.
Since the mappings $\mathcal{Y}_1$, $\mathcal{U}_1$, and $\mathcal{P}_1$ are Lipschitz continuous, the value of $\delta_6$ can be reduced, for the last time, so that $\| ( \bar{y},\bar{u},\bar{p}) \|_{\Lambda_{T,0}} \leq \delta_6'$.
Therefore, $(\bar{y},\bar{u},\bar{p})$ is the unique solution to \eqref{eq:nonlin_eq_phi3} and thus
\begin{equation} \label{eq_charac2}
(\bar{y},\bar{u},\bar{p})= (\mathcal{Y}_2,\mathcal{U}_2,\mathcal{P}_2)(y_0,q).
\end{equation}

\emph{Step 4:} proof of estimate \eqref{eq:estimate_rhc}.\\
Combining \eqref{eq_lipschA}, the definition of $q$, and \eqref{eq:taylor_k}, we obtain that
\begin{align*}
\| (\mathcal{Y}_2,\mathcal{U}_2,\mathcal{P}_2)(y_0,0)
- (\mathcal{Y}_2,\mathcal{U}_2,\mathcal{P}_2)(y_0,q) \|_{\Lambda_{T,0}}
\leq \ & M \| \bar{y}(T) \|_Y^k \\
\| (\mathcal{Y}_2,\mathcal{U}_2,\mathcal{P}_2)(\tilde{y}_0,\tilde{q})
- (\mathcal{Y}_2,\mathcal{U}_2,\mathcal{P}_2)(y_0,q) \|_{\Lambda_{T,-\lambda}} \leq \ &  M e^{-\lambda T} \| \bar{y}(T) \|_Y^k.
\end{align*}
Estimate \eqref{eq:estimate_rhc} follows, using the characterizations \eqref{eq_charac1} and \eqref{eq_charac2}.
\end{proof}

In the sequel, the triplet $(\mathcal{Y}_2,\mathcal{U}_2,\mathcal{P}_2)(y_0)$ indicates the solution (with its associated costate) to \eqref{eq:Pb_One_step_RH}. The triplet is also denoted $(y_{T,k},u_{T,k},p_{T,k})$ when no ambiguity is possible.

\begin{proposition} \label{prop:other_estim_one_step}
There exist $\delta_7 \in (0,\delta_6]$ and $M > 0$ such that for all $y_0$ and $\tilde{y}_0 \in B_Y(\delta_6)$,
\begin{equation} \label{eq:estimate1}
\| \mathcal{Y}_1(\tilde{y}_0)-\mathcal{Y}_1(y_0) \|_{W_\lambda(0,T)} \leq M \| \tilde{y}_0- y_0 \|_Y.
\end{equation}
Moreover, for all $T \geq T_0$,
\begin{equation} \label{eq:estimate2}
\| \mathcal{Y}_2(\tilde{y}_0,0)-\mathcal{Y}_2(y_0,0) \|_{W_\lambda(0,T)} \leq M \| \tilde{y}_0- y_0 \|_Y.
\end{equation}
\end{proposition}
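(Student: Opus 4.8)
The plan is to establish both estimates by the same device already used for Propositions~\ref{prop:existenceSol} and~\ref{prop:estimate_one_step}: a difference of two solutions is shown to solve a \emph{linear} optimality system of the form~\eqref{eq:non_reg_os} with $Q=\Pi$, and Proposition~\ref{proposition:non_reg_os} is then applied \emph{with the weight $\mu=\lambda$}, which directly delivers a bound in $\Lambda_{T,\lambda}$, hence in $W_\lambda(0,T)$. Since on a finite interval the weighted spaces coincide with the unweighted ones as vector spaces, no issue of definition arises; the whole work is to identify the data of the linear system and to estimate its $\Upsilon_{T,\lambda}$-norm, keeping every constant independent of $T$. Throughout, $\delta_7$ is reduced so that the relevant trajectories and costates are small and Lemma~\ref{lemma:sensitivity_relation} applies.

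For~\eqref{eq:estimate1} I would set $(\delta y,\delta u,\delta p)=(\mathcal{Y}_1,\mathcal{U}_1,\mathcal{P}_1)(\tilde y_0)-(\mathcal{Y}_1,\mathcal{U}_1,\mathcal{P}_1)(y_0)$ on $(0,T)$ and subtract the two identities $\Phi_1(\cdot)=(\cdot,0,0,0)$. Expanding each of the three bilinear terms of $\Phi_1$ via $N\tilde y\tilde u-Nyu = Ny\,\delta u + N\delta y\,u + N\delta y\,\delta u$ (and similarly for $uN^*p$ and $\langle Ny,p\rangle_Y$), one checks that $(\delta y,\delta u,\delta p)$ solves~\eqref{eq:non_reg_os} with $Q=\Pi$, initial datum $\tilde y_0-y_0$, right-hand sides $f,g,h$ made of these cross terms, and terminal datum $q=\delta p(T)-\Pi\,\delta y(T)$. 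By the sensitivity relation (Lemma~\ref{lemma:sensitivity_relation}), $\delta p(T)=D\mathcal{V}(\tilde y(T))-D\mathcal{V}(y(T))$, so $q=r(\tilde y(T))-r(y(T))$ with $r(z):=D\mathcal{V}(z)-\Pi z$; since $r(0)=0$ and $Dr(0)=0$ (because $D^2\mathcal{V}(0)$ is the form associated with $\Pi$), the mean value inequality gives $\|q\|_Y\le L\,\max\big(\|\tilde y(T)\|_Y,\|y(T)\|_Y\big)\,\|\delta y(T)\|_Y$ near the origin. The terms $f,g,h$ are estimated exactly as the analogous terms in the proof of Lemma~\ref{lemma:oc_for_one_step}, now carrying the weight $e^{\lambda\cdot}$ through: each cross term is a product of a factor bounded by $M\delta_7$ — namely $\|y\|_{L^\infty(0,T;Y)}$, $\|u\|_{L^2(0,T)}$, or $\|p\|_{L^\infty(0,T;Y)}$, all controlled by Proposition~\ref{prop:existenceSol} — and a factor bounded by $\|(\delta y,\delta u,\delta p)\|_{\Lambda_{T,\lambda}}$ (for the purely quadratic terms $N\delta y\,\delta u$ and their analogues one uses instead $\|(\delta y,\delta u,\delta p)\|_{\Lambda_{T,0}}\le M\|\tilde y_0-y_0\|_Y$, from the Lipschitz continuity of Proposition~\ref{prop:existenceSol}). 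Hence $\max\big(\|f\|_{L^2_\lambda(0,T;V^*)},\|g\|_{L^2_\lambda(0,T;V^*)},\|h\|_{L^2_\lambda(0,T)}\big)\le C\delta_7\,\|(\delta y,\delta u,\delta p)\|_{\Lambda_{T,\lambda}}$ with $C$ independent of $T$. For the terminal datum, the point is that $q$ enters the $\Upsilon_{T,\lambda}$-norm with weight $e^{\lambda T}$; combining $\|y(T)\|_Y,\|\tilde y(T)\|_Y\le M\delta_7$ with $e^{\lambda T}\|\delta y(T)\|_Y\le\|e^{\lambda\cdot}\delta y\|_{L^\infty(0,T;Y)}\le\|(\delta y,\delta u,\delta p)\|_{\Lambda_{T,\lambda}}$ gives $e^{\lambda T}\|q\|_Y\le LM\delta_7\,\|(\delta y,\delta u,\delta p)\|_{\Lambda_{T,\lambda}}$. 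Proposition~\ref{proposition:non_reg_os} then yields $\|(\delta y,\delta u,\delta p)\|_{\Lambda_{T,\lambda}}\le M\big(\|\tilde y_0-y_0\|_Y+C'\delta_7\,\|(\delta y,\delta u,\delta p)\|_{\Lambda_{T,\lambda}}\big)$, and reducing $\delta_7$ (independently of $T$) so that $MC'\delta_7\le\frac12$ absorbs the last term and produces~\eqref{eq:estimate1}, using $\|\delta y\|_{W_\lambda(0,T)}\le\|(\delta y,\delta u,\delta p)\|_{\Lambda_{T,\lambda}}$.

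For~\eqref{eq:estimate2} I would run the same computation with $(\mathcal{Y}_2,\mathcal{U}_2,\mathcal{P}_2)(\cdot,0)$ and $\Phi_2$ in place of $(\mathcal{Y}_1,\mathcal{U}_1,\mathcal{P}_1)$ and $\Phi_1$. The only structural change is the terminal equation: now $\delta p(T)=D\mathcal{V}_k(\tilde y(T))-D\mathcal{V}_k(y(T))$, and writing $D\mathcal{V}_k(z)=\Pi z+s(z)$, where $s$ collects the derivatives of the terms of $\mathcal{V}_k$ of order $\ge 3$ (so $s(0)=0$ and $Ds(0)=0$), the terminal datum becomes $q=s(\tilde y(T))-s(y(T))$, again obeying $\|q\|_Y\le L\,\max\big(\|\tilde y(T)\|_Y,\|y(T)\|_Y\big)\,\|\delta y(T)\|_Y$. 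The smallness of $\|y_{T,k}\|_{W(0,T)}$, $\|u_{T,k}\|_{L^2(0,T)}$, $\|p_{T,k}\|_{W(0,T)}$ (all $\le M\delta_7$) is provided by Proposition~\ref{prop:existence_one_step_pb} and Lemma~\ref{lemma:oc_for_one_step}, and the quadratic cross terms are controlled by $\|(\delta y,\delta u,\delta p)\|_{\Lambda_{T,0}}\le\|\tilde y_0-y_0\|_Y$, which follows from the first line of~\eqref{eq_lipschA}; the same absorption then closes the argument for $T\ge T_0$. One may moreover combine~\eqref{eq:estimate1} with~\eqref{eq:estimate_rhc} to record that $\|y_{T,k}(T)\|_Y\le Me^{-\lambda T}\|y_0\|_Y$, which makes the threshold $T_0$ and the last reduction of $\delta_7$ explicit.

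The step I expect to be the main obstacle is the control of the terminal datum $q$. It occurs in $\Upsilon_{T,\lambda}$ weighted by $e^{\lambda T}$, which \emph{a priori} blows up as $T\to\infty$, so the estimate would be useless without a compensating decay. It is saved only because $q$ is quadratically small in the final states — a consequence of the vanishing of the first derivative of $D\mathcal{V}-\Pi$ (respectively of $s$) at the origin — so that the gain $\|\delta y(T)\|_Y\le e^{-\lambda T}\|(\delta y,\delta u,\delta p)\|_{\Lambda_{T,\lambda}}$ exactly cancels the weight, up to the small factor $M\delta_7$. The other point requiring care is that every constant, in particular the one from Proposition~\ref{proposition:non_reg_os} and the a priori bounds of Propositions~\ref{prop:existenceSol} and~\ref{prop:existence_one_step_pb}, must be independent of $T$ — which is precisely why Proposition~\ref{proposition:non_reg_os} is stated uniformly in $T$ for $\mu\in\{-\lambda,0,\lambda\}$.
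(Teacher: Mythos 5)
Your proof is correct, but it is organized differently from the paper's. The paper does not work with the difference of two solutions directly: it introduces a copy $\Phi_3$ of the optimality map $\Phi_2$ (and a map $\Phi_4$ with terminal condition $p(T)-D\mathcal{V}(y(T))$ for estimate \eqref{eq:estimate1}), now viewed from $\Lambda_{T,\lambda}$ to $\Upsilon_{T,\lambda}$, applies the two-norm inverse mapping theorem (Theorem \ref{thmInvThm}) with the $T$-uniform invertibility of the linearization supplied by Proposition \ref{proposition:non_reg_os} for $\mu=\lambda$, and then identifies the resulting Lipschitz solution maps $(\mathcal{Y}_3,\mathcal{U}_3,\mathcal{P}_3)$ and $(\mathcal{Y}_4,\mathcal{U}_4,\mathcal{P}_4)$ with $(\mathcal{Y}_2,\mathcal{U}_2,\mathcal{P}_2)(\cdot,0)$ and $(\mathcal{Y}_1,\mathcal{U}_1,\mathcal{P}_1)$ via the local uniqueness statements and Lemma \ref{lemma:embedding}. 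You instead take the two already-known solutions, exhibit their difference as the solution of the linear system \eqref{eq:non_reg_os} with $Q=\Pi$ and small data, and close by an absorption argument — essentially unfolding by hand the fixed-point iteration that the inverse mapping theorem packages. Both arguments rest on the same two pillars: the $T$-uniform bound of Proposition \ref{proposition:non_reg_os} in the $\mu=\lambda$ weighted spaces, and the fact that $D\mathcal{V}-\Pi$ (respectively $D\mathcal{V}_k-\Pi$) vanishes to first order at the origin, which is exactly what cancels the $e^{\lambda T}$ weight on the terminal datum; your explicit discussion of this cancellation is the right diagnosis of where the proof could fail. What the paper's route buys is the reuse of an abstract theorem already needed elsewhere (so only differentiability and Lipschitz properties of $\Phi_3$, $\Phi_4$ must be checked) plus, as a by-product, local uniqueness in the weighted ball; what your route buys is that no identification step between solution maps is needed, since you start from the solutions whose difference you want to estimate. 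One small point of care in your version: the absorption step needs $\|(\delta y,\delta u,\delta p)\|_{\Lambda_{T,\lambda}}<\infty$ a priori, which is automatic here only because $T$ is finite and the weighted and unweighted norms are then equivalent (with $T$-dependent constants); this is worth stating explicitly.
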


\begin{remark} \label{remark:speed_cv}
As a direct consequence of the above proposition, we obtain that for all $y_0 \in B_Y(\delta_7)$, for all $t \in [0,\infty)$,
\begin{equation}
\| \bar{y}(t) \|_Y \leq M e^{-\lambda t} \| y_0 \|_Y,
\end{equation}
where $\bar{y}$ is the optimal trajectory. Moreover, for all $t \in [0,T]$,
$\| y_{T,k}(t) \|_Y \leq M e^{-\lambda t} \| y_0 \|_Y,
$
where $y_{T,k}$ denotes the optimal trajectory associated with the solution to \eqref{eq:Pb_One_step_RH}.
\end{remark}

\begin{proof}[Proof of Proposition \ref{prop:other_estim_one_step}]
\emph{Step 1:} construction of the mapping $\Phi_3$ and application of the inverse mapping theorem.\\
Consider the mapping $\Phi_3$, defined as $\Phi_2$ but from $\Lambda_{T,\lambda}$ to $\Upsilon_{T,\lambda}$.
We let the reader check that $\Phi_3$ is well-defined, differentiable, with a locally Lipschitz-continuous derivative.
By Proposition \ref{proposition:non_reg_os}, $D\Phi_3(0,0,0)$ has a bounded inverse. Moreover, there exists $M>0$ such that $\| D \Phi_3(0,0,0)^{-1} \| \leq M$, for all $T \geq T_0$. Therefore, by the inverse mapping theorem, there exist $\delta_7>0$, $\delta_7'>0$, $M>0$ (independent of $T$), and three $M$-Lipschitz continuous mappings
\begin{equation*}
y_0 \in B_Y(\delta_7) \mapsto (\mathcal{Y}_3,\mathcal{U}_3,\mathcal{P}_3)(y_0)
\in \Lambda_{T,\lambda}
\end{equation*}
such that for all $y_0 \in B_Y(\delta_7)$, the triplet $(\mathcal{Y}_3,\mathcal{U}_3,\mathcal{P}_3)(y_0)$ is the unique solution to
\begin{equation} \label{eq:nonlin_eq_phi3}
\Phi_3(y,u,p) = (y_0,0,0,0,0), \quad
\| (y,u,p) \|_{\Lambda_{T,\lambda}} \leq \delta_7'.
\end{equation}
As in the proof of the previous proposition, the value of $\delta_7$ will be reduced, still the new values of $\delta_7$ can be chosen independently of $T$.

\emph{Step 2:} the mappings $\Phi_2$ and $\Phi_3$ coincide.\\
Let us reduce $\delta_7>0$, if necessary, so that $\delta_7 \leq \delta_6$. By Lemma \ref{lemma:embedding}, for all $y_0 \in B_Y(\delta_7)$,
\begin{equation*}
\| (\mathcal{Y}_3,\mathcal{U}_3,\mathcal{P}_3)(y_0) \|_{\Lambda_{T,0}}
\leq M \| (\mathcal{Y}_3,\mathcal{U}_3,\mathcal{P}_3)(y_0) \|_{\Lambda_{T,\lambda}},
\end{equation*}
where the constant $M$ is independent of $T$. Reducing $\delta_7$ so that $\delta_7 \leq \delta_6'/M$, we obtain that for all $y_0 \in B_Y(\delta_7)$, $\| (\mathcal{Y}_3,\mathcal{U}_3,\mathcal{P}_3)(y_0,q) \|_{\Lambda_{T,0}}
\leq \delta_6'$.
We also have
$\Phi_2((\mathcal{Y}_3,\mathcal{U}_3,\mathcal{P}_3)(y_0))
= (y_0,0,0,0,0)$.
Since $\| y_0 \|_Y \leq \delta_6$, we obtain that $(\mathcal{Y}_3,\mathcal{U}_3,\mathcal{P}_3)(y_0)$ is the unique solution to \eqref{eq:nonlin_eq_phi2} (with $q=0$) and finally that
$(\mathcal{Y}_3,\mathcal{U}_3,\mathcal{P}_3)(y_0)
= (\mathcal{Y}_2,\mathcal{U}_2,\mathcal{P}_2)(y_0,0)$.
The estimate \eqref{eq:estimate2} follows, using the Lipschitz-continuity of $\mathcal{Y}_3$ for the $W_\lambda(0,T)$-norm.

\emph{Step 3:} proof of estimate \eqref{eq:estimate1}.\\
Estimate \eqref{eq:estimate1} can be proved in a very similar way to \eqref{eq:estimate2}, therefore, we only sketch the proof.
Consider the mapping $\Phi_4$, defined as follows:
\begin{equation*}
\Phi_4 \colon
(y,u,p) \in \Lambda_{T,\lambda} \mapsto
\begin{pmatrix}
y(0) \\ \dot{y}- (Ay + (Ny+B)u) \\
- \dot{p} - A^*p - u N^* p - C^*C y \\
\alpha u + \langle Ny+ B,p \rangle_Y \\
p(T)- D\mathcal{V}(y(T))
\end{pmatrix}
\in \Upsilon_{T,\lambda}.
\end{equation*}
Applying the inverse function theorem (using in particular Proposition \ref{proposition:non_reg_os}), one obtains three Lipschitz-continuous mapping $\mathcal{Y}_4$, $\mathcal{U}_4$, and $\mathcal{P}_4$. Then, one can show that these mappings locally coincide with $\mathcal{Y}_1$, $\mathcal{U}_1$, and $\mathcal{P}_1$, respectively.
Estimate \eqref{eq:estimate1} follows.
\end{proof}

The following corollary collects the different estimates that will be used in the analysis of the last section.

\begin{corollary} \label{corollary}
There exists a constant $M>0$ such that for all $y_0$ and $\tilde{y}_0 \in B_Y(\delta_7)$, for all $\tau$ and $T$ with $0 \leq \tau \leq T$,
\begin{align*}
\max \big( \| \mathcal{Y}_1(\tilde{y}_0) - \mathcal{Y}_1(y_0) \|_{W(0,\tau)} , \| \mathcal{U}_1(\tilde{y}_0)-\mathcal{U}_1(y_0) \|_{L^2(0,\tau)} \big) \leq \ & M \| \tilde{y}_0-y_0 \|_Y \label{a} \tag{$a$} \\
\| e^{\lambda \tau} ( \mathcal{Y}_1(\tilde{y}_0)-\mathcal{Y}_1(y_0)) \|_{Y} \leq \ & M \| \tilde{y}_0- y_0 \|_Y \tag{$b$} \label{b} \\
\| y_{T,k}(\tau) \|_Y \leq \ & M e^{-\lambda \tau} \| y_0 \|_Y \tag{$c$} \label{c} \\
\max( \| y_{T,k} - \bar{y} \|_{W(0,\tau)} , \| u_{T,k} - \bar{u} \|_{L^2(0,\tau)} ) \leq \ & M e^{\lambda \tau -\lambda (k+1) T } \| y_0 \|_Y^k \tag{$d$} \label{d} \\
\| y_{T,k}(\tau)-\bar{y}(\tau) \|_{Y} \leq \ & M e^{\lambda \tau - (k+1) \lambda T} \| y_0 \|_Y^k. \tag{$e$} \label{e}
\end{align*}
\end{corollary}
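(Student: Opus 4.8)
The plan is to derive the five estimates \eqref{a}--\eqref{e} purely by combining results already available --- Proposition \ref{prop:existenceSol}, inequalities \eqref{eq:estimate1} and \eqref{eq:estimate_rhc}, and the norm-equivalence Lemma \ref{lemma:embedding} --- with elementary manipulations of the weighted spaces; no new estimate on the state equation or on the optimality systems is needed. Throughout, $M$ denotes a generic constant independent of $\tau$ and $T$, and we may freely assume $\delta_7 \leq 1$.

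Estimate \eqref{a} is immediate: by Proposition \ref{prop:existenceSol} the maps $\mathcal{Y}_1$ and $\mathcal{U}_1$ are $M$-Lipschitz continuous on $B_Y(\delta_1)$ with values in $W(0,\infty)$ and $L^2(0,\infty)$, and restricting a function in $W(0,\infty)$ or $L^2(0,\infty)$ to $(0,\tau)$ does not increase its norm. For \eqref{b}, I would apply \eqref{eq:estimate1} on the interval $(0,\tau)$, getting $\| \mathcal{Y}_1(\tilde y_0) - \mathcal{Y}_1(y_0) \|_{W_\lambda(0,\tau)} \leq M \| \tilde y_0 - y_0 \|_Y$, and then use the continuous embedding $W(0,\tau) \hookrightarrow C([0,\tau],Y)$ --- applied to $e^{\lambda\cdot}(\mathcal{Y}_1(\tilde y_0) - \mathcal{Y}_1(y_0))$ and evaluated at the endpoint $t=\tau$ --- to convert the $W_\lambda(0,\tau)$-norm into the pointwise bound claimed in \eqref{b}. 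The same embedding argument, applied on $(0,T)$ with $\tilde y_0 = 0$ and using $\mathcal{Y}_1(0)=0$, yields as a by-product the decay bound $\| \bar y(s) \|_Y \leq M e^{-\lambda s} \| y_0 \|_Y$ for all $s \in [0,T]$ (equivalently, Remark \ref{remark:speed_cv} for the optimal trajectory), which is reused below.

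For \eqref{d} I would start from the second inequality in \eqref{eq:estimate_rhc}, $\| (y_{T,k},u_{T,k},p_{T,k}) - (\bar y,\bar u,\bar p) \|_{\Lambda_{T,-\lambda}} \leq M e^{-\lambda T} \| \bar y(T) \|_Y^k$, and insert the decay bound at $s=T$ to get $\| (y_{T,k},u_{T,k},p_{T,k}) - (\bar y,\bar u,\bar p) \|_{\Lambda_{T,-\lambda}} \leq M e^{-(k+1)\lambda T} \| y_0 \|_Y^k$. The key step --- and the only one requiring care --- is the passage from this weighted estimate on $(0,T)$ to the \emph{unweighted} estimate \eqref{d} on the subinterval $(0,\tau)$: applying Lemma \ref{lemma:embedding} directly on $(0,T)$ with $\mu_0=-\lambda$, $\mu_1=0$ would cost a factor $e^{\lambda T}$, which is too lossy. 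Instead I would first restrict the triplet to $(0,\tau)$ --- a norm-nonincreasing operation in $\Lambda_{\cdot,-\lambda}$, since each of the three weighted norms in its definition is monotone in the interval --- and only then apply Lemma \ref{lemma:embedding} on the \emph{shorter} interval $(0,\tau)$, which costs merely $e^{\lambda\tau}$ because on $(0,\tau)$ the weight $e^{-\lambda t}$ stays above $e^{-\lambda\tau}$. This gives $\| (y_{T,k},u_{T,k},p_{T,k}) - (\bar y,\bar u,\bar p) \|_{\Lambda_{\tau,0}} \leq M e^{\lambda\tau-(k+1)\lambda T} \| y_0 \|_Y^k$; reading off the $y$- and $u$-components yields \eqref{d}, and applying $W(0,\tau)\hookrightarrow C([0,\tau],Y)$ at $t=\tau$ to the $y$-component yields \eqref{e}. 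Finally \eqref{c} follows from \eqref{e} and the triangle inequality $\| y_{T,k}(\tau) \|_Y \leq \| y_{T,k}(\tau)-\bar y(\tau) \|_Y + \| \bar y(\tau) \|_Y$: the second term is bounded by $M e^{-\lambda\tau}\| y_0 \|_Y$ by the decay bound, while the first is bounded by $M e^{\lambda\tau-(k+1)\lambda T}\| y_0 \|_Y^k$, which is itself at most $M e^{-\lambda\tau}\| y_0 \|_Y$ since $\tau\leq T$ and $k\geq 2$ force $\lambda\tau-(k+1)\lambda T\leq -\lambda\tau$, and $\| y_0 \|_Y^k\leq \| y_0 \|_Y$ as $\delta_7\leq 1$.

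The only genuine difficulty is the weighted-norm bookkeeping in \eqref{d}: one must restrict to the shorter interval $(0,\tau)$ \emph{before} trading the exponential weight for a constant, in order to end up with the sharp prefactor $e^{\lambda\tau}$ rather than $e^{\lambda T}$; the remaining steps are routine combinations of the cited statements.
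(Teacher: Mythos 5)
Your proposal is correct and follows essentially the same route as the paper: (a) from the Lipschitz continuity in Proposition \ref{prop:existenceSol}, (b) from \eqref{eq:estimate1}, and (d)--(e) by inserting the decay bound $\| \bar{y}(T) \|_Y \leq M e^{-\lambda T}\| y_0 \|_Y$ into the second inequality of \eqref{eq:estimate_rhc} and then restricting to $(0,\tau)$ \emph{before} applying Lemma \ref{lemma:embedding}, which is exactly the paper's key bookkeeping step yielding the prefactor $e^{\lambda\tau}$ rather than $e^{\lambda T}$. The only (harmless) deviation is that you obtain (c) from (e) plus the triangle inequality and $\delta_7\leq 1$, whereas the paper reads it off directly from the $W_\lambda$-Lipschitz continuity of $\mathcal{Y}_2$ in Proposition \ref{prop:other_estim_one_step} applied with $\tilde{y}_0=0$.
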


\begin{proof}
Estimate \eqref{a} is proved in Proposition \ref{prop:existenceSol}. Estimates \eqref{b} and \eqref{c} are proved in Proposition \ref{prop:other_estim_one_step}.
By Proposition \ref{prop:other_estim_one_step}, we have $\| \bar{y}(t) \|_Y \leq M e^{-\lambda t} \| y_0 \|_Y$.
Combined with Proposition \ref{prop:estimate_one_step}, we obtain that
\begin{equation*}
\| (y_{T,k},u_{T,k},p_{T,k}) -(\bar{y},\bar{u},\bar{p}) \|_{\Lambda_{T,-\lambda}} \leq M e^{-\lambda (k+1) T } \| y_0 \|_Y^k.
\end{equation*}
We obtain then with Lemma \ref{lemma:embedding} that
\begin{align*}
\| (y_{T,k},u_{T,k},p_{T,k}) -(\bar{y},\bar{u},\bar{p}) \|_{\Lambda_{\tau,0}}
\leq \ & e^{\lambda \tau} \| (y_{T,k},u_{T,k},p_{T,k}) -(\bar{y},\bar{u},\bar{p}) \|_{\Lambda_{\tau,-\lambda}} \\
\leq \ & M e^{\lambda \tau -\lambda (k+1) T } \| y_0 \|_Y^k.
\end{align*}
from which estimates \eqref{d} and \eqref{e} immediately follows.
\end{proof}

\section{Error estimates for the Receding-Horizon method} \label{section:rhc_estimate}

\begin{proof}[Proof of Theorem \ref{thm:main}]

We fix now $\tau_0 \geq 0$ such that
\begin{equation*}
r_0:= Me^{-\lambda \tau_0} < 1,
\end{equation*}
where $M$ is the constant provided in Corollary \ref{corollary}.
We make use of the following notation:
\begin{equation*}
r= Me^{-\lambda \tau}, \quad
\theta= e^{\lambda \tau -\lambda (k+1) T} \| y_0 \|_Y^k, \quad
t_n= n \tau, \quad
\bar{y}_n= \bar{y}(n\tau),
\end{equation*}
where $T >\tau \ge \tau_0$. Note that $r \leq r_0 < 1$.

\emph{Step 1:} well-posedness of the algorithm.\\
Let us prove by induction that for all $n \in \mathbb{N}$, the algorithm is well-posed at steps 0, 1,...,$n-1$ and that $y_n \in B_Y(\delta_7)$. For $n= 0$, the statement is true by assumption. Assume that it holds for a given $n$. Since $y_n \in B_Y(\delta_7)$,  by Proposition \ref{prop:estimate_one_step}, Problem \eqref{eq:Pb_One_step_RH} with initial condition $y_n$ has a unique local solution $y_{T,k}$. Moreover, by estimate \eqref{c},
$\| y_{n+1} \|_Y = \| y_{RH}((n+1)\tau) \|_Y
= \| y_{T,k}(\tau) \|_Y
\leq r \| y_n \|_Y \leq \delta_7$.
Therefore, the statement holds at $(n+1)$, which concludes the proof of well-posedness. Note that a direct consequence of the last inequality is that
\begin{equation} \label{eq:decay_y_n}
\| y_n \|_Y \leq r^n \| y_0 \|_Y, \quad \text{for all $n \in \mathbb{N}$}.
\end{equation}

\emph{Step 2}: estimation of $\| y_{RH}-\bar{y} \|_{W(0,\infty)}$ and $\| u_{RH}-\bar{u} \|_{L^2(0,\infty)}$.\\
Consider the following sequences:
\begin{align*}
a_n = \ & \max \big( \| y_{RH} - \bar{y} \|_{W(t_n,t_{n+1})}, \| u_{RH}- \bar{u} \|_{L^2(t_n,t_{n+1})} \big) \\
b_n = \ & \| y_n - \bar{y}_n \|_Y.
\end{align*}
We prove in this second step that for all $n \in \mathbb{N}$,
\begin{align}
a_n \leq \ & M \theta r^{kn} + M b_n, \label{eq:estiM_1} \\
b_{n+1} \leq \ & M \theta r^{kn} + r b_n. \label{eq:estima_b}
\end{align}
Before proving these two estimates, observe that by Proposition \ref{prop:other_estim_one_step}, for all $n \in \mathbb{N} \backslash \{ 0 \}$,
\begin{equation*}
\| \bar{y}_n \|_Y \leq M e^{-\lambda t_n} \| y_0 \|_Y \leq M e^{-\lambda \tau} \| y_0 \|_Y \leq \| y_0 \|_Y \leq \delta_7.
\end{equation*}
Of course, we also have $\| \bar{y}_0 \|_Y = \| y_0 \|_Y \leq \delta_7$.

Let $n \in \mathbb{N}$ and let us prove \eqref{eq:estiM_1}.
Let $(y_{T,k},u_{T,k})$ be the local solution to \eqref{eq:Pb_One_step_RH} with initial condition $y_n$ (characterized in Proposition \ref{prop:estimate_one_step}).
Recall that by construction, $y_{RH}(t_n + t)= y_{T,k}(t)$, for $t \in (0,\tau)$.
Moreover, by dynamic programming, $\bar{y}(t_n+t)= \mathcal{Y}_1(\bar{y}_n;t)$, for \@ $t \in (0,\tau)$.
Therefore,
\begin{align*}
\| y_{RH}- \bar{y} \|_{W(t_n,t_{n+1})} = \ & \| y_{T,k} - \mathcal{Y}_1(\bar{y}_n) \|_{W(0,\tau)} \\
\leq \ & \| y_{T,k} - \mathcal{Y}_1(y_n) \|_{W(0,\tau)} + \| \mathcal{Y}_1(y_n)- \mathcal{Y}_1(\bar{y}_n) \|_{W(0,\tau)}.
\end{align*}
Using estimate \eqref{d} and \eqref{eq:decay_y_n}, we obtain
$\| y_{T,k} - \mathcal{Y}_1(y_n) \|_{W(0,\tau)}
\leq Me^{\lambda \tau -\lambda (k+1)T} \| y_n \|_Y^k
\leq M \theta r^{kn}$.
Using estimate \eqref{a}, we find
$\| \mathcal{Y}_1(y_n)- \mathcal{Y}_1(\bar{y}_n) \|_{W(0,\tau)}
\leq M \| y_n- \bar{y}_n \|_Y = M b_n$.
Combining the last three obtained estimates, we obtain that
\begin{equation*}
\| y_{RH}- \bar{y} \|_{W(t_n,t_{n+1})} \leq M \theta r^{kn} + M b_n.
\end{equation*}
The term $\| u_{RH} - \bar{u} \|_{L^2(t_n,t_{n+1})}$ can be estimated exactly in the same way. Estimate \eqref{eq:estiM_1} follows.

Estimate \eqref{eq:estima_b} can be proved similarly.
We have
\begin{equation*}
\| y_{n+1}- \bar{y}_{n+1} \|_Y \le \| y_{T,k}(\tau) - \mathcal{Y}_1(y_n;\tau) \|_Y + \| \mathcal{Y}_1(y_n;\tau)- \mathcal{Y}_1(\bar{y}_n;\tau) \|_Y.
\end{equation*}
Using estimate \eqref{e} and \eqref{eq:decay_y_n}, we obtain
\begin{equation*}
\| y_{T,k}(\tau) - \mathcal{Y}_1(y_n;\tau) \|_Y
\leq M e^{\lambda \tau - \lambda(k+1)T} \| y_n \|_Y^k
\leq M \theta r^{kn}.
\end{equation*}
Using estimate $(b)$, we obtain that
\begin{equation*}
\| \mathcal{Y}_1(y_n;\tau)- \mathcal{Y}_1(\bar{y}_n;\tau) \|_Y
\leq M e^{-\lambda \tau} \| y_n- \bar{y}_n \|_Y = r b_n.
\end{equation*}
Combining the last three obtained estimates, we obtain \eqref{eq:estima_b}.

\emph{Step 3:} proof of estimate \eqref{eq:final_estim1}.\\
Let us set $c_n= b_n/r^{n-1}$. By \eqref{eq:estima_b}, we have
\begin{equation*}
c_{n+1} \leq M \theta r^{(k-1)n} + c_n \leq M \theta + c_n,
\end{equation*}
since $k \geq 2$. We have $c_0=b_0= 0$, therefore $c_n \leq n M \theta$ and
$b_n \leq M \theta n r^{n-1}$.
Moreover, $a_n \leq M \theta \big( r^{kn} + nr^{n-1} \big)$
and finally
\begin{align*}
& \max \big( \| y_{RH} - \bar{y} \|_{W(0,\infty)}, \| u_{RH} - \bar{u} \|_{L^2(0,\infty)} \big) \leq \sum_{n=0}^\infty a_n \\
& \qquad \leq M \theta \sum_{n=0}^\infty \big( r^{kn} + nr^{n-1} \big)
= M \theta \Big( \frac{1}{1-r^k} + \frac{1}{(1-r)^2} \Big)
\leq M \theta \Big( \frac{1}{1-r_0^k} + \frac{1}{(1-r_0)^2} \Big),
\end{align*}
which proves \eqref{eq:final_estim1}.

\emph{Step 4: proof of estimate \eqref{eq:final_estim2}}. \\
In the following equalities, we denote the norms $\| \cdot \|_{L^2(0,\infty;Z)}$ and $\| \cdot \|_{L^2(0,\infty)}$ by $\| \cdot \|$ to simplify. We have
\begin{align}
\mathcal{J}(u_{RH},y_0)- \mathcal{V}({y}_0)
= \ & \Big( \frac{1}{2} \| C y_{RH} \|^2 + \frac{\alpha}{2} \| u_{RH} \|^2 \Big) -
\Big( \frac{1}{2} \| C \bar{y} \|^2 + \frac{\alpha}{2} \| \bar{u} \|^2 \Big) \notag \\
& \qquad - \big\langle \bar{p}, \dot{y}_{RH}-(Ay_{RH}+(Ny_{RH}+B)u_{RH}) \big\rangle_{L^2(0,\infty;V),L^2(0,\infty;V^*)} \notag \\
& \qquad + \big\langle \bar{p}, \dot{\bar{y}}- (A\bar{y} + (N\bar{y} + B)\bar{u}) \big\rangle_{L^2(0,\infty;V),L^2(0,\infty;V^*)}. \label{eq:diffValFunc}
\end{align}
Indeed, the last two terms (in brackets) are null.
The four following relations can be easily verified:
\begin{equation} \label{eq:diffValFunc2}
\begin{aligned}
\frac{1}{2} \| Cy_{RH} \|^2 - \frac{1}{2} \| C \bar{y} \|^2
= \ & \langle C^*C \bar{y}, y_{RH}- \bar{y} \rangle_{L^2(0,\infty;Y)} + \frac{1}{2} \| C(y_{RH}-\bar{y}) \|^2, \\
\frac{\alpha}{2} \| u_{RH} \|^2 - \frac{\alpha}{2} \| \bar{u} \|^2
= \ & \alpha \langle \bar{u}, u_{RH}- \bar{u} \rangle_{L^2(0,\infty)} + \frac{\alpha}{2} \| u_{RH} - \bar{u} \|^2, \\
Ny_{RH} u_{RH} - N \bar{y} \bar{u}
= \ & N \bar{y}(u_{RH}- \bar{u}) + N(y_{RH}-\bar{y})\bar{u} + N(y_{RH}-\bar{y})(u_{RH}-\bar{u}), \\
-\langle \bar{p},\dot{y}_{RH} -\dot{\bar{y}} \rangle_{L^2(0,\infty;V),L^2(0,\infty;V^*)}
= \ & \langle \dot{\bar{p}}, y_{RH}- \bar{y} \rangle_{L^2(0,\infty;V^*),L^2(0,\infty;V)}.
\end{aligned}
\end{equation}
Combining \eqref{eq:diffValFunc} and \eqref{eq:diffValFunc2} yields
\begin{align*}
\mathcal{J}(u_{RH},y_0)-\mathcal{V}(y_0)
= \ &  \frac{1}{2} \| C(y_{RH}-\bar{y}) \|^2
+ \frac{\alpha}{2} \| u_{RH}- \bar{u} \|^2 \\
& \quad + \big\langle \bar{p}, N(y_{RH}-\bar{y})(u_{RH}-\bar{u}) \big\rangle_{L^2(0,\infty;V);L^2(0,\infty;V^*)} \\
& \quad + \big\langle \underbrace{ \dot{\bar{p}} + A^*\bar{p} + \bar{u} N^* \bar{p} + C^* C \bar{y} }_{=0}, y_{RH}- \bar{y} \big\rangle_{L^2(0,\infty;V^*);L^2(0,\infty;V)} \\
& \quad + \big\langle \underbrace{\alpha \bar{u} + \langle N \bar{y}+ B, \bar{p} \rangle_Y}_{=0}, u_{RH}- \bar{u} \big\rangle_{L^2(0,\infty)}.
\end{align*}
The three remaining quadratic terms (on the right-hand side) can be estimated with \eqref{eq:final_estim1}. We finally obtain
\begin{align*}
\mathcal{J}(u_{RH},y_0)-\mathcal{V}(y_0)
\leq \ & M \max( \| y_{RH} - \bar{y} \|_{W(0,\infty)}, \| u_{RH} - \bar{u} \|_{L^2(0,\infty)} )^2 \\
\leq \ & M \big( e^{-\lambda(T-\tau)- \lambda kT} \| y_0 \|_Y^k \big)^2,
\end{align*}
as was to be proved.
\end{proof}

\section{The case of quadratic terminal cost functions} \label{section:quad_tc}

In this section, we extend our analysis to the situation of a terminal penalty cost which is a non-negative quadratic functional. A particular case is the one of a zero penalty, which can be seen as a first-order Taylor expansion of the value function.
Let us fix a bounded set $\mathcal{Q}$ (in $\mathcal{L}(Y)$) of symmetric and positive semi-definite operators.
Problem \eqref{eq:Pb_One_step_RH} is now replaced by the following one in the design of an RHC method:
\begin{equation*} \label{eq:Pb_One_step_RH_quad} \tag{$P_{T,Q}$}
\begin{cases} \begin{array}{l}
{\displaystyle \inf_{(y,u) \in W(0,T) \times L^2(0,T)} \
\frac{1}{2} \int_0^T \| Cy(t) \|_{Z}^2 \dd t + \frac{\alpha}{2} \int_0^T u(t)^2 \dd t +
\frac{1}{2} \langle y(T), Q y(T) \rangle_Y,} \\[1em]
\qquad \text{subject to: } \dot{y}= Ay + Nyu + Bu, \quad y(0)= y_0,
\end{array}
\end{cases}
\end{equation*}
where $Q \in \mathcal{Q}$.
The analysis which has been done in Sections \ref{section:finite-horizon} and \ref{section:rhc_estimate} can be adapted to this new class of terminal cost functions without difficulty. In order to prove Theorem \ref{thm:main_bis} below, we simply comment on the modifications which have to be realized.
First the existence of a global solution to \eqref{eq:Pb_One_step_RH_quad} can be established, assuming that $\| y_0 \|_Y$ is sufficiently small. The proof is the same as the one of Proposition \ref{prop:existence_one_step_pb} (in the case $k=2$). One can then derive optimality conditions. They have the same form as in Lemma \ref{lemma:oc_for_one_step}, but with another terminal condition:
\begin{equation*}
p_{T,Q}(T) = Q y_{T,Q}(T).
\end{equation*}
Proposition \ref{prop:estimate_one_step} has to adapted as follows.

\begin{proposition} \label{prop:estimate_one_step_2}
There exist $\delta >0$, $\delta' > 0$, and $M > 0$ such that for all $y_0 \in B_Y(\delta)$ and for all $Q \in \mathcal{Q}$, Problem \eqref{eq:Pb_One_step_RH} has a unique local solution $(y_{T,Q},u_{T,Q})$ with associated costate $p_{T,Q}$ satisfying
\begin{equation*}
\| (y_{T,Q},u_{T,Q},p_{T,Q}) \|_{\Lambda_{T,0}}
\leq \delta'.
\end{equation*}
Moreover,
\begin{equation*}
\begin{cases}
\begin{array}{rl}
\| (y_{T,Q},u_{T,Q},p_{T,Q}) - (\bar{y},\bar{u},\bar{p}) \|_{\Lambda_{T,0}}
\leq & \! \! \! M  \big( \| Q - \Pi \|_{\mathcal{L}(Y)} + \| y(T) \|_Y \big) \| y(T) \|_Y, \\
\| (y_{T,Q},u_{T,Q},p_{T,Q}) - (\bar{y},\bar{u},\bar{p}) \|_{\Lambda_{T,-\lambda}} \leq & \! \! \! M e^{-\lambda T} \big( \| Q - \Pi \|_{\mathcal{L}(Y)} + \| y(T) \|_Y \big) \| y(T) \|_Y,
\end{array}
\end{cases}
\end{equation*}
where $\bar{y}$, $\bar{u}$, and $\bar{p}$ are the restrictions of $\mathcal{Y}_1(y_0)$, $\mathcal{U}_1(y_0)$, and $\mathcal{P}_1(y_0)$ to $(0,T)$.
\end{proposition}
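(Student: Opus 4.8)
The plan is to transcribe, almost verbatim, the proof of Proposition \ref{prop:estimate_one_step}, the only change being the terminal condition in the costate equation; so I describe only the adaptations. First I would introduce the mapping $\Phi_2^Q$ obtained from $\Phi_2$ (see \eqref{eq:defPhi2}) by replacing its last component $p(T)-D\mathcal{V}_k(y(T))$ with $p(T)-Qy(T)$, viewed simultaneously from $\Lambda_{T,0}$ to $\Upsilon_{T,0}$ and from $\Lambda_{T,-\lambda}$ to $\Upsilon_{T,-\lambda}$. Since the $Q$-dependent term $Qy(T)$ is linear in $(y,p)$, the mapping $\Phi_2^Q$ shares the regularity of $\Phi_2$ — in particular the local Lipschitz bound on $D\Phi_2^Q$ analogous to \eqref{eq_strange_lip_cont} — with moduli independent of $Q\in\mathcal{Q}$ (its nonlinear part is exactly that of $\Phi_2$) and of $T$; moreover $D\Phi_2^Q(0,0,0)$ is precisely the operator of the linear system \eqref{eq:non_reg_os} associated with the operator $Q$, so Proposition \ref{proposition:non_reg_os} shows it is boundedly invertible with a bound on the inverse uniform over $Q\in\mathcal{Q}$ and $T>0$. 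The quantitative inverse mapping theorem (Theorem \ref{thmInvThm}) then provides $\delta>0$, $\delta'>0$, $M>0$, all independent of $T$ and $Q$, and mappings $(y_0,q)\in B_Y(\delta)^2\mapsto(\mathcal{Y}_2^Q,\mathcal{U}_2^Q,\mathcal{P}_2^Q)(y_0,q)$, uniformly Lipschitz for both the $\Lambda_{T,0}$- and the $\Lambda_{T,-\lambda}$-norm (the increment in $q$ carrying the weight $e^{-\lambda T}$ in the latter, as in \eqref{eq_lipschA}), solving $\Phi_2^Q(y,u,p)=(y_0,0,0,0,q)$ together with $\|(y,u,p)\|_{\Lambda_{T,0}}\le\delta'$.

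Next, as in Steps 2 and 3 of that proof, I would identify the two triples of interest. Using the versions of Proposition \ref{prop:existence_one_step_pb} and Lemma \ref{lemma:oc_for_one_step} for the quadratic terminal cost (established, with constants that can be taken uniform over $\mathcal{Q}$, as indicated before the statement), and reducing $\delta$, one gets for every $y_0\in B_Y(\delta)$ and $Q\in\mathcal{Q}$ a local solution $(y_{T,Q},u_{T,Q})$ of \eqref{eq:Pb_One_step_RH_quad} with costate $p_{T,Q}$ such that $\|(y_{T,Q},u_{T,Q},p_{T,Q})\|_{\Lambda_{T,0}}\le\delta'$ and $\Phi_2^Q(y_{T,Q},u_{T,Q},p_{T,Q})=(y_0,0,0,0,0)$; hence $(y_{T,Q},u_{T,Q},p_{T,Q})=(\mathcal{Y}_2^Q,\mathcal{U}_2^Q,\mathcal{P}_2^Q)(y_0,0)$, which also yields the claimed local uniqueness. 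For the optimal triple I would take $(\bar{y},\bar{u},\bar{p})$ to be the restriction to $(0,T)$ of $(\mathcal{Y}_1,\mathcal{U}_1,\mathcal{P}_1)(y_0)$ (requiring in particular $\delta\le\delta_2$ so that Lemma \ref{lemma:sensitivity_relation} applies); then $\bar{p}(T)=D\mathcal{V}(\bar{y}(T))$, so $\Phi_2^Q(\bar{y},\bar{u},\bar{p})=(y_0,0,0,0,q)$ with $q=D\mathcal{V}(\bar{y}(T))-Q\bar{y}(T)$. Since $\mathcal{V}$ is $C^\infty$ near $0$ with $D\mathcal{V}(0)=0$ and $D^2\mathcal{V}(0)=\Pi$, one has $D\mathcal{V}(y)=\Pi y+O(\|y\|_Y^2)$, hence $q=(\Pi-Q)\bar{y}(T)+O(\|\bar{y}(T)\|_Y^2)$ and
\[
\|q\|_Y \ \le\ M\big(\|Q-\Pi\|_{\mathcal{L}(Y)}+\|\bar{y}(T)\|_Y\big)\,\|\bar{y}(T)\|_Y.
\]
Because $\mathcal{Q}$ is bounded and $\|\bar{y}(T)\|_Y\le M\|y_0\|_Y$ with $M$ independent of $T$, one reduces $\delta$ one last time — independently of $T$ and $Q$ — so that $\|q\|_Y\le\delta$ and $\|(\bar{y},\bar{u},\bar{p})\|_{\Lambda_{T,0}}\le\delta'$, and concludes $(\bar{y},\bar{u},\bar{p})=(\mathcal{Y}_2^Q,\mathcal{U}_2^Q,\mathcal{P}_2^Q)(y_0,q)$.

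Finally, with $y_0$ fixed, subtracting the two characterizations and invoking the Lipschitz continuity of $(\mathcal{Y}_2^Q,\mathcal{U}_2^Q,\mathcal{P}_2^Q)$ in $q$ yields the first estimate of the proposition with bound $M\|q\|_Y$ and the second with bound $Me^{-\lambda T}\|q\|_Y$; inserting the bound on $\|q\|_Y$ produces exactly the two stated inequalities (with $\bar{y}(T)$ in the role of $y(T)$ there; since $\|y_{T,Q}(T)-\bar{y}(T)\|_Y\le M\|q\|_Y$, one may equally write $\|y_{T,Q}(T)\|_Y$ after enlarging $M$). The argument is thus a routine adaptation of the proof of Proposition \ref{prop:estimate_one_step}; I expect the only genuinely new point to be the control of the defect $q$, which relies on extracting from the smoothness of $\mathcal{V}$ the first-order identity $D\mathcal{V}(y)-\Pi y=O(\|y\|_Y^2)$ — this is what produces both the $\|Q-\Pi\|_{\mathcal{L}(Y)}$ contribution (via $(\Pi-Q)\bar{y}(T)$) and the $\|\bar{y}(T)\|_Y$ contribution in the estimate. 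The rest is bookkeeping: checking that every reduction of $\delta$ and every constant is uniform in $Q\in\mathcal{Q}$, which is harmless since $Q$ enters $\Phi_2^Q$ only through the linear term $Qy(T)$, so the Lipschitz modulus of $D\Phi_2^Q$ is $Q$-independent while Proposition \ref{proposition:non_reg_os} already provides a $Q$-uniform bound on $D\Phi_2^Q(0,0,0)^{-1}$.
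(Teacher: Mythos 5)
Your proposal is correct and follows essentially the same route as the paper: the paper's own argument is precisely to rerun the proof of Proposition \ref{prop:estimate_one_step} with the last component of $\Phi_2$ replaced by $p(T)-Qy(T)$, to redefine the defect as $q=Q\bar{y}(T)-D\mathcal{V}(\bar{y}(T))$ (the sign is immaterial), and to bound $\|q\|_Y\leq\|Q-\Pi\|_{\mathcal{L}(Y)}\|\bar{y}(T)\|_Y+M\|\bar{y}(T)\|_Y^2$ via $D\mathcal{V}(y)-\Pi y=O(\|y\|_Y^2)$. Your additional care about uniformity in $Q\in\mathcal{Q}$ (the nonlinearity of $\Phi_2^Q$ being $Q$-independent and Proposition \ref{proposition:non_reg_os} giving a $Q$-uniform bound on the inverse of the linearization) is exactly the point the paper leaves implicit.
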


The proof is very similar to the one of Proposition \ref{prop:estimate_one_step}. Basically, one needs to replace $D\mathcal{V}_k$ by $Q$ everywhere in the proof.
The last component of $\Phi_2$ must be replaced by $p(T)-Qy(T)$. The variable $q$ which is introduced later must be redefined as follows:
$q= Qy - D \mathcal{V}(y)$.
Then, we have
\begin{equation*}
\| q \|_Y= \| Qy- D \mathcal{V}(y) \|_Y
\leq \| (Q-\Pi) y \|_Y + \| \Pi y - D \mathcal{V}(y) \|_Y
\leq \| Q - \Pi \|_{\mathcal{L}(Y)} \| y \|_Y + M \| y \|_Y^2
\end{equation*}
and the proposition follows.

The statement of Proposition \ref{prop:other_estim_one_step} is unchanged. In Corollary \ref{corollary}, estimates $(d)$ and $(e)$ write now:
\begin{align*}
\max( \| y_{T,k} - \bar{y} \|_{W(0,\tau)} , \| u_{T,k} - \bar{u} \|_{L^2(0,T)} ) \leq \ & M e^{-\lambda (T- \tau)- \lambda T} \big( \| Q - \Pi \|_{\mathcal{L}(Y)} + e^{-\lambda T} \| y_0 \|_Y \big) \| y_0 \|_Y \\
\| y_{T,k}(t)-\bar{y}(\tau) \|_{Y} \leq \ & M e^{-\lambda (T- \tau)- \lambda T} \big( \| Q - \Pi \|_{\mathcal{L}(Y)} + e^{-\lambda T} \| y_0 \|_Y \big) \| y_0 \|_Y.
\end{align*}

We finally obtain the following theorem.

\begin{theorem} \label{thm:main_bis}
There exist $\tau_0 > 0$,  $\delta > 0$, and $M > 0$ such that for all $\tau \geq \tau_0$, for all $T \geq \tau$, for all $Q \in \mathcal{Q}$, and for all $y_0 \in B_Y(\delta)$, the Receding-Horizon method with quadratic penalty cost is well-posed. Moreover, the following estimates hold true:
\begin{align}
\max( \| y_{RH} - \bar{y} \|_{W(0,\infty)}, \| u_{RH} - \bar{u} \|_{L^2(0,\infty)} )
\leq \ & M e^{-\lambda(T-\tau)- \lambda T} \big( \| Q-\Pi \|_{\mathcal{L}(Y)} + e^{-\lambda T} \| y_0 \|_Y \big) \| y_0 \|_Y \label{eq:final_estim1_bis} \\
\mathcal{J}(u_{RH},y_0) - \mathcal{V}(y_0) \leq \ & M e^{-2\lambda(T-\tau)- 2\lambda T}  \big( \| Q-\Pi \|_{\mathcal{L}(Y)} + e^{-\lambda T} \| y_0 \|_Y \big)^2 \| y_0 \|_Y^2, \label{eq:final_estim2_bis}
\end{align}
where $\bar{u}$ is the unique solution to problem \eqref{eqProblem} and $\bar{y}$ the associated trajectory.
\end{theorem}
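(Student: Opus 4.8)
The plan is to retrace the proof of Theorem~\ref{thm:main} almost verbatim, replacing Proposition~\ref{prop:estimate_one_step} by Proposition~\ref{prop:estimate_one_step_2} and the estimates $(d)$, $(e)$ of Corollary~\ref{corollary} by the modified versions displayed above, while estimate $(c)$ and Proposition~\ref{prop:other_estim_one_step} are used without change. First I would fix $\tau_0 > 0$ such that $r_0 := M e^{-\lambda \tau_0} < 1$, where $M$ is the (new) constant from Corollary~\ref{corollary}, and for $T \geq \tau \geq \tau_0$ put $r = M e^{-\lambda \tau} \leq r_0$, $t_n = n \tau$, $\bar{y}_n = \bar{y}(n \tau)$. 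The only genuine bookkeeping change is to redefine the error scale as
\[
\theta = e^{-\lambda(T - \tau) - \lambda T} \big( \| Q - \Pi \|_{\mathcal{L}(Y)} + e^{-\lambda T} \| y_0 \|_Y \big) \| y_0 \|_Y ,
\]
which is exactly the right-hand side of \eqref{eq:final_estim1_bis} up to the constant $M$.

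The sequence of steps would then mirror the four steps of Section~\ref{section:rhc_estimate}. \textbf{Well-posedness.} Since estimate $(c)$ is unchanged, $\| y_{n+1} \|_Y \leq r \| y_n \|_Y$, hence by induction $\| y_n \|_Y \leq r^n \| y_0 \|_Y \leq \delta$ for all $n$; thus each one-step problem admits the local solution of Proposition~\ref{prop:estimate_one_step_2} and the algorithm is well-posed. \textbf{Coupled recursion.} With $a_n = \max( \| y_{RH} - \bar{y} \|_{W(t_n,t_{n+1})}, \| u_{RH} - \bar{u} \|_{L^2(t_n,t_{n+1})} )$ and $b_n = \| y_n - \bar{y}_n \|_Y$, the same telescoping argument as in the proof of Theorem~\ref{thm:main} — inserting the infinite-horizon trajectory $\mathcal{Y}_1(y_n)$ between the receding-horizon trajectory started at $y_n$ and the true optimal trajectory $\mathcal{Y}_1(\bar{y}_n)$, then using the modified $(d)$, $(e)$ for the first difference and estimate $(a)$ together with $(b)$ for the second — yields, after absorbing constants,
\[
a_n \leq M \theta r^n + M b_n , \qquad b_{n+1} \leq M \theta r^n + r b_n .
\]
\textbf{Resolution.} Setting $c_n = b_n / r^{n-1}$ turns the second inequality into $c_{n+1} \leq M \theta + c_n$, so $c_0 = b_0 = 0$ gives $c_n \leq n M \theta$, hence $b_n \leq M \theta\, n\, r^{n-1}$ and $a_n \leq M \theta ( r^n + n r^{n-1} )$; summing the two series and bounding $r$ by $r_0$ gives $\max( \| y_{RH} - \bar{y} \|_{W(0,\infty)}, \| u_{RH} - \bar{u} \|_{L^2(0,\infty)} ) \leq \sum_n a_n \leq M \theta$, which is \eqref{eq:final_estim1_bis}. \textbf{Cost estimate.} Finally \eqref{eq:final_estim2_bis} follows exactly as in Step~4 of the proof of Theorem~\ref{thm:main}: one expands $\mathcal{J}(u_{RH},y_0) - \mathcal{V}(y_0)$, cancels the linear terms using the optimality system satisfied by $(\bar{y},\bar{u},\bar{p}) = (\mathcal{Y}_1,\mathcal{U}_1,\mathcal{P}_1)(y_0)$, and bounds the three remaining quadratic terms by $M \max( \| y_{RH} - \bar{y} \|_{W(0,\infty)}, \| u_{RH} - \bar{u} \|_{L^2(0,\infty)} )^2$, which by \eqref{eq:final_estim1_bis} is at most $M \theta^2$, i.e.\@ the right-hand side of \eqref{eq:final_estim2_bis}.

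The step requiring the most care — though it is not a deep obstacle — is verifying that the mixed quantity $\big( \| Q - \Pi \|_{\mathcal{L}(Y)} + e^{-\lambda T} \| y_n \|_Y \big) \| y_n \|_Y$, which appears when the modified $(d)$, $(e)$ are applied with initial datum $y_n$, still produces a clean factor $r^n$: since $\| y_n \|_Y \leq r^n \| y_0 \|_Y \leq \| y_0 \|_Y$, it is bounded by $r^n \big( \| Q - \Pi \|_{\mathcal{L}(Y)} + e^{-\lambda T} \| y_0 \|_Y \big) \| y_0 \|_Y$, so the $r$-exponent entering the $b_n$-recursion is exactly $1$. This is the same borderline exponent already present for $k = 2$ in Theorem~\ref{thm:main}, and the linear growth $c_n \leq n M \theta$ it produces stays summable once the factor $r^{n-1}$ is restored. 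Consequently no new analytic difficulty arises — everything upstream, and in particular Proposition~\ref{proposition:non_reg_os}, which is already stated for an arbitrary bounded family $\mathcal{Q}$ of symmetric positive semi-definite operators, has been arranged so that this adaptation is essentially mechanical.
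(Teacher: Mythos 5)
Your proposal is correct and follows essentially the same route as the paper, which itself proves Theorem \ref{thm:main_bis} by rerunning the four steps of Section \ref{section:rhc_estimate} with Proposition \ref{prop:estimate_one_step_2} and the modified estimates $(d)$, $(e)$ in place of their Taylor-expansion counterparts. Your bookkeeping of the recursion is accurate: the factor $\big( \| Q-\Pi \|_{\mathcal{L}(Y)} + e^{-\lambda T}\| y_n \|_Y \big)\| y_n \|_Y \leq r^n \big( \| Q-\Pi \|_{\mathcal{L}(Y)} + e^{-\lambda T}\| y_0 \|_Y \big)\| y_0 \|_Y$ gives $b_{n+1} \leq M\theta r^n + r b_n$, hence $c_n \leq nM\theta$ and the summable bound $a_n \leq M\theta(r^n + nr^{n-1})$, exactly as in the paper's treatment of the case $k=2$.
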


\begin{remark}
The same comment as in Remark \ref{rem:th1} regarding the dependence of \eqref{eq:final_estim1_bis} with respect to $\tau$ and $T$ can be made. For $Q= \Pi$, estimates \eqref{eq:final_estim1_bis} and \eqref{eq:final_estim2_bis} coincide with \eqref{eq:final_estim1} and \eqref{eq:final_estim2}, respectively, for $k=2$.
\end{remark}

\section{Numerical illustration}

This section is dedicated to the numerical illustration of estimates \eqref{eq:final_estim1} and \eqref{eq:final_estim1_bis}. We focus on the dependence of $\| u_{RH}-\bar{u} \|_{L^2(0,\infty)}$ with respect to the sampling time $\tau$ and the prediction horizon $T$. We consider for this purpose a stabilization problem with state variable of dimension 2, described by the following data:
\begin{equation*}
A= \begin{pmatrix} 0.5 & 1 \\ 0 & -1 \end{pmatrix}, \quad
B= \begin{pmatrix} 1 \\ 1 \end{pmatrix}, \quad
N= \begin{pmatrix} -0.2 & -0.2 \\ 0 & -0.2 \end{pmatrix}, \quad
C= \begin{pmatrix} 1 & 0 \\ 0 & 1 \end{pmatrix}, \quad
y_0= \begin{pmatrix} 1 \\ 1 \end{pmatrix}, \quad
\alpha= 0.1.
\end{equation*}
We have generated different controls with the RHC algorithm, for values of $\tau$ and $T$ ranging from $0.1$ to $2.8$ and for the following three terminal cost functions: $\phi= 0$ (case $k=1$), $\phi= \mathcal{V}_2$ (case $k=2$), and $\phi= \mathcal{V}_3$ (case $k=3$).
All optimal control problems have been solved with the limited-memory BFGS method, with a tolerance of $10^{-12}$ for the $L^2$-norm of the gradient of the reduced cost function. For the discretization of the state equation, we have used the Runge-Kutta method of order 4 with time-step equal to $0.01$. The approximations of the optimal control are computed on the interval $(0,5)$.

As a consequence of estimates \eqref{eq:final_estim1} and \eqref{eq:final_estim1_bis}, there exist for each of the three different cost functions two constants $\tau_0>0$ and $M>0$, both independent of $\tau$ and $T$, such that $\| u_{RH} - \bar{u} \|_{L^2(0,\infty)} \leq M e^{-(k+1) \lambda T + \lambda \tau}$, for $\tau_0 \leq \tau \leq T$.
Thus the quantity
\begin{equation*}
\rho(\tau,T):= \ln( \| u_{RH} - \bar{u} \|_{L^2(0,\infty)} ) + (k+1) \lambda T - \lambda \tau
\end{equation*}
is bounded from above, for sufficiently large values of $\tau$.
The results obtained for $\| u_{RH}-\bar{u} \|_{L^2(0,\infty)}$ and $\rho(\tau,T)$ are shown on Figures \ref{fig:dist} and \ref{fig:rho}, where $\lambda \approx 1.5$.

\begin{figure}[p!]
\begin{center}
Case $k=1$: $\phi= 0$.
\end{center}
\vspace{2mm}
\begin{equation*}
{\scriptstyle
\begin{array}{|c||c|c|c|c|c|c|c|c|c|c|}
\hline
& \multicolumn{10}{c|}{T} \\ \hline
\tau & 0.1 & 0.4 & 0.7 & 1.0 & 1.3 & 1.6 & 1.9 & 2.2 & 2.5 & 2.8 \\ \hline
0.1 & 4.3 \,\mathrm{e}{+0} & 8.3 \,\mathrm{e}{-1} & 2.6 \,\mathrm{e}{-1} & 1.1 \,\mathrm{e}{-1} & 4.7 \,\mathrm{e}{-2} & 2.0 \,\mathrm{e}{-2} & 8.1 \,\mathrm{e}{-3} & 3.3 \,\mathrm{e}{-3} & 1.4 \,\mathrm{e}{-3} & 5.5 \,\mathrm{e}{-4} \\
0.4 & & 1.6 \,\mathrm{e}{+0} & 3.9 \,\mathrm{e}{-1} & 1.6 \,\mathrm{e}{-1} & 6.9 \,\mathrm{e}{-2} & 2.9 \,\mathrm{e}{-2} & 1.2 \,\mathrm{e}{-2} & 4.9 \,\mathrm{e}{-3} & 2.0 \,\mathrm{e}{-3} & 8.2 \,\mathrm{e}{-4} \\
0.7 & & & 5.8 \,\mathrm{e}{-1} & 2.2 \,\mathrm{e}{-1} & 9.7 \,\mathrm{e}{-2} & 4.2 \,\mathrm{e}{-2} & 1.7 \,\mathrm{e}{-2} & 7.2 \,\mathrm{e}{-3} & 2.9 \,\mathrm{e}{-3} & 1.2 \,\mathrm{e}{-3} \\
1.0 & & & & 2.7 \,\mathrm{e}{-1} & 1.3 \,\mathrm{e}{-1} & 6.0 \,\mathrm{e}{-2} & 2.6 \,\mathrm{e}{-2} & 1.1 \,\mathrm{e}{-2} & 4.5 \,\mathrm{e}{-3} & 1.8 \,\mathrm{e}{-3} \\
1.3 & & & & & 1.5 \,\mathrm{e}{-1} & 8.2 \,\mathrm{e}{-2} & 3.8 \,\mathrm{e}{-2} & 1.7 \,\mathrm{e}{-2} & 6.9 \,\mathrm{e}{-3} & 2.8 \,\mathrm{e}{-3} \\
1.6 & & & & & & 8.6 \,\mathrm{e}{-2} & 5.2 \,\mathrm{e}{-2} & 2.5 \,\mathrm{e}{-2} & 1.1 \,\mathrm{e}{-2} & 4.4 \,\mathrm{e}{-3} \\
1.9 & & & & & & & 5.3 \,\mathrm{e}{-2} & 3.3 \,\mathrm{e}{-2} & 1.6 \,\mathrm{e}{-2} & 6.8 \,\mathrm{e}{-3} \\
2.2 & & & & & & & & 3.4 \,\mathrm{e}{-2} & 2.1 \,\mathrm{e}{-2} & 1.0 \,\mathrm{e}{-2} \\
2.5 & & & & & & & & & 2.1 \,\mathrm{e}{-2} & 1.3 \,\mathrm{e}{-2} \\
2.8 & & & & & & & & & & 1.4 \,\mathrm{e}{-2} \\ \hline
\end{array}
}
\end{equation*}
\vspace{6mm}
\begin{center}
Case $k=2$: $\phi(y)= \mathcal{V}_2(y)= \frac{1}{2} \langle y, \Pi y \rangle_Y$.
\end{center}
\vspace{2mm}
\begin{equation*}
{\scriptstyle
\begin{array}{|c||c|c|c|c|c|c|c|c|c|c|}
\hline
& \multicolumn{10}{c|}{T} \\ \hline
\tau & 0.1 & 0.4 & 0.7 & 1.0 & 1.3 & 1.6 & 1.9 & 2.2 & 2.5 & 2.8 \\ \hline
0.1 & 6.8 \,\mathrm{e}{-1} & 1.5 \,\mathrm{e}{-2} & 3.8 \,\mathrm{e}{-4} & 1.8 \,\mathrm{e}{-4} & 8.0 \,\mathrm{e}{-5} & 2.5 \,\mathrm{e}{-5} & 7.2 \,\mathrm{e}{-6} & 2.0 \,\mathrm{e}{-6} & 5.2 \,\mathrm{e}{-7} & 1.4 \,\mathrm{e}{-7} \\
0.4 & & 4.9 \,\mathrm{e}{-2} & 1.8 \,\mathrm{e}{-3} & 1.8 \,\mathrm{e}{-4} & 1.1 \,\mathrm{e}{-4} & 3.9 \,\mathrm{e}{-5} & 1.1 \,\mathrm{e}{-5} & 3.1 \,\mathrm{e}{-6} & 8.4 \,\mathrm{e}{-7} & 2.2 \,\mathrm{e}{-7} \\
0.7 & & & 7.2 \,\mathrm{e}{-3} & 5.9 \,\mathrm{e}{-4} & 1.2 \,\mathrm{e}{-4} & 5.0 \,\mathrm{e}{-5} & 1.6 \,\mathrm{e}{-5} & 4.5 \,\mathrm{e}{-6} & 1.2 \,\mathrm{e}{-6} & 3.2 \,\mathrm{e}{-7} \\
1.0 & & & & 3.2 \,\mathrm{e}{-3} & 2.6 \,\mathrm{e}{-4} & 5.0 \,\mathrm{e}{-5} & 2.0 \,\mathrm{e}{-5} & 6.3 \,\mathrm{e}{-6} & 1.8 \,\mathrm{e}{-6} & 4.8 \,\mathrm{e}{-7} \\
1.3 & & & & & 1.5 \,\mathrm{e}{-3} & 1.1 \,\mathrm{e}{-4} & 2.0 \,\mathrm{e}{-5} & 8.3 \,\mathrm{e}{-6} & 2.6 \,\mathrm{e}{-6} & 7.2 \,\mathrm{e}{-7} \\
1.6 & & & & & & 6.5 \,\mathrm{e}{-4} & 4.6 \,\mathrm{e}{-5} & 8.1 \,\mathrm{e}{-6} & 3.4 \,\mathrm{e}{-6} & 1.0 \,\mathrm{e}{-6} \\
1.9 & & & & & & & 2.8 \,\mathrm{e}{-4} & 1.9 \,\mathrm{e}{-5} & 3.3 \,\mathrm{e}{-6} & 1.4 \,\mathrm{e}{-6} \\
2.2 & & & & & & & & 1.1 \,\mathrm{e}{-4} & 7.9 \,\mathrm{e}{-6} & 1.4 \,\mathrm{e}{-6} \\
2.5 & & & & & & & & & 4.7 \,\mathrm{e}{-5} & 3.2 \,\mathrm{e}{-6} \\
2.8 & & & & & & & & & & 1.9 \,\mathrm{e}{-5} \\ \hline
\end{array}
}
\end{equation*}
\vspace{6mm}
\begin{center}
Case $k=3$: $\phi(y)= \mathcal{V}_3(y)$.
\end{center}
\vspace{2mm}
\begin{equation*}
{\scriptstyle
\begin{array}{|c||c|c|c|c|c|c|c|c|c|c|}
\hline
& \multicolumn{10}{c|}{T} \\ \hline
\tau & 0.1 & 0.4 & 0.7 & 1.0 & 1.3 & 1.6 & 1.9 & 2.2 & 2.5 & 2.8 \\ \hline
0.1 & 1.4 \,\mathrm{e}{-1} & 1.7 \,\mathrm{e}{-4} & 1.5 \,\mathrm{e}{-5} & 3.2 \,\mathrm{e}{-6} & 1.2 \,\mathrm{e}{-6} & 2.7 \,\mathrm{e}{-7} & 5.2 \,\mathrm{e}{-8} & 9.3 \,\mathrm{e}{-9} & 1.9 \,\mathrm{e}{-9} & 1.0 \,\mathrm{e}{-9} \\
0.4 & & 1.0 \,\mathrm{e}{-3} & 4.6 \,\mathrm{e}{-5} & 3.3 \,\mathrm{e}{-6} & 1.8 \,\mathrm{e}{-6} & 4.4 \,\mathrm{e}{-7} & 8.7 \,\mathrm{e}{-8} & 1.6 \,\mathrm{e}{-8} & 3.5 \,\mathrm{e}{-9} & 1.4 \,\mathrm{e}{-9} \\
0.7 & & & 1.5 \,\mathrm{e}{-4} & 1.5 \,\mathrm{e}{-5} & 1.8 \,\mathrm{e}{-6} & 5.6 \,\mathrm{e}{-7} & 1.2 \,\mathrm{e}{-7} & 2.2 \,\mathrm{e}{-8} & 4.0 \,\mathrm{e}{-9} & 2.2 \,\mathrm{e}{-9} \\
1.0 & & & & 7.7 \,\mathrm{e}{-5} & 5.1 \,\mathrm{e}{-6} & 5.3 \,\mathrm{e}{-7} & 1.5 \,\mathrm{e}{-7} & 3.1 \,\mathrm{e}{-8} & 5.7 \,\mathrm{e}{-9} & 2.5 \,\mathrm{e}{-9} \\
1.3 & & & & & 2.8 \,\mathrm{e}{-5} & 1.5 \,\mathrm{e}{-6} & 1.4 \,\mathrm{e}{-7} & 3.9 \,\mathrm{e}{-8} & 8.7 \,\mathrm{e}{-9} & 2.8 \,\mathrm{e}{-9} \\
1.6 & & & & & & 8.4 \,\mathrm{e}{-6} & 4.1 \,\mathrm{e}{-7} & 3.6 \,\mathrm{e}{-8} & 1.1 \,\mathrm{e}{-8} & 2.8 \,\mathrm{e}{-9} \\
1.9 & & & & & & & 2.3 \,\mathrm{e}{-6} & 1.1 \,\mathrm{e}{-7} & 9.4 \,\mathrm{e}{-9} & 4.6 \,\mathrm{e}{-9} \\
2.2 & & & & & & & & 6.3 \,\mathrm{e}{-7} & 3.0 \,\mathrm{e}{-8} & 2.6 \,\mathrm{e}{-9} \\
2.5 & & & & & & & & & 1.7 \,\mathrm{e}{-7} & 7.7 \,\mathrm{e}{-9} \\
2.8 & & & & & & & & & & 4.3 \,\mathrm{e}{-8} \\ \hline
\end{array}
}
\end{equation*}
\caption{$\| u_{RH} - \bar{u} \|_{L^2(0,\infty)}$, for various values of $\tau$ and $T$ and for $k=1,2,3$.}
\label{fig:dist}
\end{figure}

\begin{figure}[p!]
\begin{center}
Case $k=1$: $\phi= 0$.
\end{center}
\vspace{2mm}
\begin{equation*}
{\scriptstyle
\begin{array}{|c||c|c|c|c|c|c|c|c|c|c|}
\hline
& \multicolumn{10}{c|}{T} \\ \hline
\tau & 0.1 & 0.4 & 0.7 & 1.0 & 1.3 & 1.6 & 1.9 & 2.2 & 2.5 & 2.8 \\ \hline
0.1 & 1.6 & 0.9 & 0.6 & 0.7 & 0.7 & 0.7 & 0.7 & 0.8 & 0.8 & 0.8 \\
0.4 & & 1.1 & 0.6 & 0.6 & 0.6 & 0.7 & 0.7 & 0.7 & 0.7 & 0.7 \\
0.7 & & & 0.5 & 0.4 & 0.5 & 0.6 & 0.6 & 0.6 & 0.6 & 0.6 \\
1.0 & & & & 0.2 & 0.4 & 0.5 & 0.6 & 0.6 & 0.6 & 0.6 \\
1.3 & & & & & 0.0 & 0.3 & 0.5 & 0.6 & 0.6 & 0.6 \\
1.6 & & & & & & -0.1 & 0.3 & 0.5 & 0.6 & 0.6 \\
1.9 & & & & & & & -0.1 & 0.3 & 0.5 & 0.6 \\
2.2 & & & & & & & & -0.1 & 0.3 & 0.5 \\
2.5 & & & & & & & & & -0.1 & 0.3 \\
2.8 & & & & & & & & & & -0.1 \\ \hline
\end{array}
}
\end{equation*}
\vspace{6mm}
\begin{center}
Case $k=2$: $\phi(y)= \mathcal{V}_2(y)= \frac{1}{2} \langle y, \Pi y \rangle_Y$.
\end{center}
\vspace{2mm}
\begin{equation*}
{\scriptstyle
\begin{array}{|c||c|c|c|c|c|c|c|c|c|c|}
\hline
& \multicolumn{10}{c|}{T} \\ \hline
\tau & 0.1 & 0.4 & 0.7 & 1.0 & 1.3 & 1.6 & 1.9 & 2.2 & 2.5 & 2.8 \\ \hline
0.1 & -0.1 & -2.6 & -4.9 & -4.3 & -3.7 & -3.5 & -3.4 & -3.4 & -3.4 & -3.3 \\
0.4 & & -1.8 & -3.8 & -4.7 & -3.8 & -3.6 & -3.4 & -3.4 & -3.3 & -3.3 \\
0.7 & & & -2.8 & -4.0 & -4.2 & -3.7 & -3.6 & -3.5 & -3.4 & -3.4 \\
1.0 & & & & -2.8 & -3.9 & -4.2 & -3.8 & -3.6 & -3.5 & -3.5 \\
1.3 & & & & & -2.6 & -3.9 & -4.2 & -3.8 & -3.5 & -3.5 \\
1.6 & & & & & & -2.5 & -3.8 & -4.2 & -3.7 & -3.5 \\
1.9 & & & & & & & -2.5 & -3.8 & -4.2 & -3.7 \\
2.2 & & & & & & & & -2.5 & -3.8 & -4.2 \\
2.5 & & & & & & & & & -2.5 & -3.8 \\
2.8 & & & & & & & & & & -2.5 \\ \hline
\end{array}
}
\end{equation*}
\vspace{6mm}
\begin{center}
Case $k=3$: $\phi(y)= \mathcal{V}_3(y)$.
\end{center}
\vspace{2mm}
\begin{equation*}
{\scriptstyle
\begin{array}{|c||c|c|c|c|c|c|c|c|c|c|}
\hline
& \multicolumn{10}{c|}{T} \\ \hline
\tau & 0.1 & 0.4 & 0.7 & 1.0 & 1.3 & 1.6 & 1.9 & 2.2 & 2.5 & 2.8 \\ \hline
0.1 & -1.6 & -6.4 & -7.0 & -6.8 & -6.0 & -5.7 & -5.5 & -5.4 & -5.2 & -4.1 \\
0.4 & & -5.1 & -6.4 & -7.2 & -6.0 & -5.6 & -5.5 & -5.4 & -5.1 & -4.1 \\
0.7 & & & -5.6 & -6.2 & -6.5 & -5.9 & -5.6 & -5.5 & -5.4 & -4.2 \\
1.0 & & & & -5.0 & -5.9 & -6.3 & -5.8 & -5.6 & -5.5 & -4.5 \\
1.3 & & & & & -4.6 & -5.8 & -6.3 & -5.8 & -5.5 & -4.8 \\
1.6 & & & & & & -4.5 & -5.7 & -6.3 & -5.8 & -5.3 \\
1.9 & & & & & & & -4.4 & -5.7 & -6.3 & -5.3 \\
2.2 & & & & & & & & -4.4 & -5.6 & -6.3 \\
2.5 & & & & & & & & & -4.4 & -5.6 \\
2.8 & & & & & & & & & & -4.4 \\ \hline
\end{array}
}
\end{equation*}
\caption{ $\rho(\tau,T):= \ln( \| u_{RH} - \bar{u} \|_{L^2(0,\infty} ) + (k+1) \lambda T - \lambda \tau$, for various values of $\tau$ and $T$ and for $k=1,2,3$.}
\label{fig:rho}
\end{figure}

A first observation is that $\| u_{RH}-\bar{u} \|_{L^2(0,\infty)}$ is decreasing with respect to $T$ and increasing with respect to $\tau$. It is also decreasing with respect to $k$, which shows (at least on this particular example) the interest of considering a high-order Taylor expansion of the value function as terminal cost.

Let us examine now the number $\rho$. In order to justify that $\rho$ is constant, we compare the variation of $\rho$ with the variation of $-(k+1) \lambda T + \lambda \tau$ over the considered values of $\tau$ and $T$. We exclude, in the three cases, the results obtained for $\tau= 0.1$, which is acceptable since our estimate only holds for sufficiently large values of $\tau$.
In the first case ($k=1$), the number $\rho(\tau,T)$ takes values between $-0.1$ and $1.1$. The variation of $\rho$ (equal to $1.2$) is rather small in comparison with the variation of the quantity $-2\lambda T + \lambda \tau$, which reaches its maximum, $-0.6$, at $(\tau,T)= (0.4,0.4)$ and its minimum, $-7.8$, at $(\tau,T)=(0.4,2.8)$ (we exclude again the case $\tau= 0.1$).
In the second case $(k=2)$, the number $\rho(\tau,T)$ takes values between $-4.2$ and $-1.8$. The variation of $\rho$ (equal to $2.4$) is small in comparison with the variation of $-3 \lambda T + \lambda \tau$ (equal to $10.8$).
In the third case $(k=3)$, the number $\rho(\tau,T)$ takes values between $-7.2$ and $-4.1$. The variation of $\rho$ (equal to $3.1$) is small in comparison with the variation of $-4 \lambda T + \lambda \tau$ (equal to $14.4$). 
We can therefore consider that the variation of $\rho$ is small in these three cases, and thus that $\rho$ is constant. We finally conclude that our error estimate gives an accurate description of the dependence of $\| u_{RH}- \bar{u} \|_{L^2(0,\infty)}$ with respect to $\tau$ and $T$.

\section{Conclusion}

We have analyzed the RHC algorithm for a class of non-linear stabilization problems. Different types of terminal cost functions have been considered for the sequence of finite-horizon problems to be solved at each iteration. An exponential rate of convergence with respect to the prediction horizon $T$ has been obtained and observed numerically on a simple example.

Future research will focus on the adaptation of our results for other types of non-linearities. As was mentioned in the introduction, our results can be extended to the case of finite-dimensional systems of the form $\dot{y}= Ay + Bu + f(y,u)$ where $f$ and its derivative vanish at 0.
The general case of time-dependent systems of the form $\dot{y}(t)= A(t)y(t) + B(t)u(t) + f(t,y(t),u(t))$ is open. The case where $A$ and $B$ are periodic could be considered, by utilizing the stabilizability results obtained in \cite{WanX16} for infinite-dimensional periodic linear control systems.
Another direction of research is the analysis of the RHC method for problems satisfying the turnpike property. Let us mention that some results have already been obtained in \cite{BreP18} for time-independent linear-quadratic problems, for which the turnpike property holds. Finally, one could generalize our error estimates by taking into account the time-discretization of the finite-horizon problems. It has been shown recently in \cite{GSS18} that non-uniform time-grids are well-suited for solving linear-quadratic optimal control problems with RHC schemes (in a nutshell: a fine grid is used on $(0,\tau)$ and a coarser one on $(\tau,T)$). This result can certainly be extended to a non-linear setting, using the techniques of the present work.

\appendix

\section{Inverse mapping theorem}

For completeness, below we give a formulation of the inverse mapping theorem, used several times in this article.

Let $\Lambda$ be a vector space equipped with two norms, $\| \cdot \|_{\Lambda_a}$ and $\| \cdot  \|_{\Lambda_b}$. The space $\Lambda$, equipped with $\| \cdot \|_{\Lambda_a}$ (resp.\@ $\| \cdot \|_{\Lambda_b}$) is denoted $\Lambda_a$ (resp.\@ $\Lambda_b$). Similarly, let $\Upsilon$ be a vector space equipped with two norms, $\| \cdot \|_{\Upsilon_a}$ and $\| \cdot \|_{\Upsilon_b}$. With the same convention as before, we write $\Upsilon_a$ and $\Upsilon_b$. It is assumed that the spaces $\Lambda_a$, $\Lambda_b$, $\Upsilon_a$, and $\Upsilon_b$ are Banach spaces.

We consider a mapping $\phi\colon \Lambda \rightarrow \Upsilon$, such that $\phi(0)= 0$.

\begin{theorem} \label{thmInvThm}
Assume that $\phi$ is continuously differentiable from $\Lambda_a$ to $\Upsilon_a$ and from $\Lambda_b$ to $\Upsilon_b$. We assume that $D\phi(0)$, as a linear mapping from $\Lambda_a$ to $\Upsilon_a$ and as a linear mapping from $\Lambda_b$ to $\Upsilon_b$ is bijective with a bounded inverse. Let $M_0 > 0$ be such that
\begin{equation} \label{eqInvThmAss1}
\| D\phi(0)^{-1} \|_{\mathcal{L}(\Lambda_a,\Upsilon_a)} \leq M_0 \quad \text{and} \quad
\| D\phi(0)^{-1} \|_{\mathcal{L}(\Lambda_b,\Upsilon_b)} \leq M_0.
\end{equation}
Assume further that there exist $\delta_0 > 0$ and $M_1 >0$ such that for all $x_1$ and $x_2 \in B_{\Lambda_a}(\delta_0)$,
\begin{equation} \label{eqInvThmAss2}
\begin{cases} \begin{array}{l}
\| D\phi(x_2)-D\phi(x_1) \|_{\mathcal{L}(\Lambda_a,\Upsilon_a)} \leq M_1 \| x_2 - x_1 \|_{\Lambda_a}, \\
\| D\phi(x_2)-D\phi(x_1) \|_{\mathcal{L}(\Lambda_b,\Upsilon_b)} \leq M_1 \| x_2 - x_1 \|_{\Lambda_a}.
\end{array}
\end{cases}
\end{equation}
Let $\delta'>0$ and let $\delta>0$ be such that
$\delta' \leq \delta_0$,
$M_0 M_1 \delta' < 1$,
and $\frac{M_0 \delta}{1-M_0 M_1 \delta'} \leq \delta'$.
Then, there exists a mapping $\mathcal{X} \colon B_{\Upsilon_a}(\delta) \rightarrow B_{\Lambda_a}(\delta')$ such that for all $y \in B_{\Upsilon_a}(\delta)$, $\mathcal{X}(y)$ is the unique solution to
\begin{equation} \label{eqInvThmEquation}
\phi(x)=y \quad \text{and} \quad \| x \|_{\Lambda_a} \leq \delta'.
\end{equation}
Moreover, for all $y_1$ and $y_2 \in B_{\Upsilon_a}(\delta)$,
\begin{equation} \label{eqInvThmLipschitz}
\begin{cases}
\begin{array}{l}
\| \mathcal{X}(y_2)-\mathcal{X}(y_1) \|_{\Lambda_a} \leq M_0(1-M_0 M_1 \delta')^{-1} \| y_2-y_1 \|_{\Upsilon_a}, \\
\| \mathcal{X}(y_2)-\mathcal{X}(y_1) \|_{\Lambda_b} \leq M_0(1-M_0 M_1 \delta')^{-1} \| y_2-y_1 \|_{\Upsilon_b}.
\end{array}
\end{cases}
\end{equation}
\end{theorem}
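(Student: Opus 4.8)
The plan is to apply the Banach fixed point theorem to a Newton-type map and to propagate every estimate through both norms at once. The crucial preliminary observation is that, since $D\phi(0)$ is a bijection of the common vector space $\Lambda$ onto the common vector space $\Upsilon$, its set-theoretic inverse is unique; hence the ``two'' bounded inverses guaranteed by the hypothesis are in fact one and the same operator $D\phi(0)^{-1}$, which satisfies $\|D\phi(0)^{-1}z\|_{\Lambda_a} \le M_0\|z\|_{\Upsilon_a}$ and $\|D\phi(0)^{-1}z\|_{\Lambda_b} \le M_0\|z\|_{\Upsilon_b}$ for all $z \in \Upsilon$. This is what lets the whole two-norm bookkeeping close; the point of the statement is precisely that one never needs Lipschitz continuity of $D\phi$ measured in the $b$-norm on the \emph{right}, only in the $a$-norm.

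For fixed $y \in B_{\Upsilon_a}(\delta)$ I would introduce $T_y(x) := x - D\phi(0)^{-1}(\phi(x) - y)$, whose fixed points in $\Lambda$ are exactly the solutions of $\phi(x) = y$. Using that $\phi$ is $C^1$ from $\Lambda_a$ to $\Upsilon_a$, for $x_1, x_2 \in B_{\Lambda_a}(\delta')$ the fundamental theorem of calculus gives
\[
T_y(x_2) - T_y(x_1) = D\phi(0)^{-1} \int_0^1 \big( D\phi(0) - D\phi(x_1 + s(x_2 - x_1)) \big)(x_2 - x_1)\dd s,
\]
and since $\|x_1 + s(x_2-x_1)\|_{\Lambda_a} \le \delta' \le \delta_0$, the first inequality in \eqref{eqInvThmAss2} (with $x_1$ replaced by $0$) bounds the integrand in $\mathcal{L}(\Lambda_a,\Upsilon_a)$ by $M_1\delta'$; hence $\|T_y(x_2) - T_y(x_1)\|_{\Lambda_a} \le M_0 M_1\delta'\|x_2 - x_1\|_{\Lambda_a}$, a contraction because $M_0 M_1\delta' < 1$. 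Taking $x_1 = 0$ and using $T_y(0) = D\phi(0)^{-1}y$ with $\|y\|_{\Upsilon_a} \le \delta$, one gets $\|T_y(x)\|_{\Lambda_a} \le M_0 M_1(\delta')^2 + M_0\delta \le \delta'$, the last step being exactly the hypothesis $\frac{M_0\delta}{1 - M_0 M_1\delta'} \le \delta'$, so $T_y$ maps the (complete) closed ball $B_{\Lambda_a}(\delta')$ into itself. The Banach fixed point theorem then yields a unique fixed point $\mathcal{X}(y) \in B_{\Lambda_a}(\delta')$, and uniqueness of the fixed point is precisely the uniqueness claim for \eqref{eqInvThmEquation}.

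To obtain \eqref{eqInvThmLipschitz}, given $y_1, y_2 \in B_{\Upsilon_a}(\delta)$ and $x_i := \mathcal{X}(y_i) \in B_{\Lambda_a}(\delta')$, I would combine the identity above (applied with $x_1, x_2$) with $T_{y_2}(x_1) - T_{y_1}(x_1) = D\phi(0)^{-1}(y_2 - y_1)$ to write
\[
x_2 - x_1 = D\phi(0)^{-1}\int_0^1 \big( D\phi(0) - D\phi(x_1 + s(x_2 - x_1)) \big)(x_2 - x_1)\dd s + D\phi(0)^{-1}(y_2 - y_1).
\]
Measuring this in $\|\cdot\|_{\Lambda_a}$ gives $\|x_2 - x_1\|_{\Lambda_a} \le M_0 M_1\delta'\|x_2 - x_1\|_{\Lambda_a} + M_0\|y_2 - y_1\|_{\Upsilon_a}$, which rearranges to the first bound. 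Measuring the \emph{same} identity in $\|\cdot\|_{\Lambda_b}$ is legitimate because $x_2 - x_1$ lives in $\Lambda = \Lambda_b$, hence $\|x_2 - x_1\|_{\Lambda_b} < \infty$; bounding the integrand in $\mathcal{L}(\Lambda_b,\Upsilon_b)$ by $M_1\|x_1 + s(x_2-x_1)\|_{\Lambda_a} \le M_1\delta'$ via the \emph{second} inequality in \eqref{eqInvThmAss2} gives $\|x_2 - x_1\|_{\Lambda_b} \le M_0 M_1\delta'\|x_2 - x_1\|_{\Lambda_b} + M_0\|y_2 - y_1\|_{\Upsilon_b}$, and rearranging gives the second bound (the $b$-norm contraction factor is again $M_0 M_1\delta' < 1$). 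There is no deep obstacle here; the one place requiring care is this last estimate, where the hypothesis \eqref{eqInvThmAss2} has been deliberately written with the $\Lambda_a$-norm on the right so that the absorption step still works, and one must also check that $D\phi(0)^{-1}$ and the integral representation genuinely coincide across the two norm pairs --- which follows from $\phi$ being $C^1$ for both pairs and from the uniqueness of the inverse noted at the outset.
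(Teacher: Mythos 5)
Your proposal is correct and follows essentially the same route as the paper: the paper's proof runs exactly the Picard iteration for your map $T_y$ starting at $x_0=0$ (verifying ball-invariance and the Cauchy property by induction rather than citing the Banach fixed point theorem), and its uniqueness and Lipschitz steps use the very same resolvent identity measured first in the $a$-norms and then in the $b$-norms, absorbing the $M_0M_1\delta'$ term as you do. The only difference is presentational, and your explicit remarks on why the two inverses coincide and why assumption \eqref{eqInvThmAss2} carries the $\Lambda_a$-norm on the right are consistent with (and slightly more explicit than) the paper's treatment.
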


\begin{remark} \label{remark:19}
\begin{enumerate}
\item For $\| \cdot \|_{\Lambda_a}= \| \cdot \|_{\Lambda_b}$ and $\| \cdot \|_{\Upsilon_a} = \| \cdot \|_{\Upsilon_b}$,  the above theorem is the classical inverse function theorem. The particularity of the formulation of the theorem is that the mapping $x \in \Lambda_a \mapsto D\phi(x) \in \mathcal{L}(\Lambda_b,\Upsilon_b)$ is locally Lipschitz-continuous, see \eqref{eqInvThmAss2}.
\item The constants $\delta'$ and $\delta$ as well as the Lipschitz modulus of $\mathcal{X}$ can be explicitly obtained as functions of the upper bound on $\| D\phi(0)^{-1} \|_{\mathcal{L}(Y,X)}$ and of the Lipschitz modulus of $D\phi(\cdot)$. In Proposition \ref{prop:estimate_one_step} and Proposition \ref{prop:other_estim_one_step} they  can be both chosen independently of $T$.
\end{enumerate}
\end{remark}

\begin{proof}[Proof of Theorem \ref{thmInvThm}]
\emph{Step 1}: Existence of a solution to \eqref{eqInvThmEquation}.\\
Fix $y \in B_{\Upsilon_a}(\delta)$. Consider the sequence $(x_n)_{n \in \mathbb{N}}$ in $X$, defined as follows:
\begin{equation} \label{eqSequenceInvFct}
x_0= 0, \quad
x_{n+1}= x_n + D\phi(0)^{-1} (y-\phi(x_n)), \quad \forall n \in \mathbb{N}.
\end{equation}
Let us prove by induction that for all $n \in \mathbb{N} \backslash \{ 0 \}$,
\begin{equation} \label{eqInductionInvFct}
\| x_n \|_{\Lambda_a} \leq \frac{1-(M_0 M_1 \delta')^n}{1- M_0 M_1 \delta'} M_0 \delta \quad \text{and} \quad
\| x_n - x_{n-1} \|_{\Lambda_a} \leq (M_0 M_1 \delta')^{n-1} M_0 \delta.
\end{equation}
Note that for all $n \in \mathbb{N}$,
\begin{equation*}
\frac{1-(M_0 M_1 \delta')^n}{1- M_0 M_1 \delta'} M_0 \delta
\leq \frac{M_0 \delta}{1-M_0 M_1 \delta'} \leq \delta'.
\end{equation*}
By \eqref{eqInvThmAss1}, we have
\begin{equation*}
\| x_1 \|_{\Lambda_a} = \| x_1- x_0 \|_{\Lambda_a} = \| D\phi(0)^{-1} y \|_{\Lambda_a}
\leq \| D \phi(0)^{-1} \|_{\mathcal{L}(\Upsilon_a,\Lambda_a)} \| y \|_{\Upsilon_a}
\leq M_0 \delta.
\end{equation*}
Therefore, the assertion holds true for $n=1$. Assume that it holds up to some $n \in \mathbb{N} \backslash \{ 0 \}$. We have
\begin{equation*}
\phi(x_n)- \phi(x_{n-1})= \int^1_0 D\phi(\theta x_n + (1-\theta) x_{n-1})(x_{n}-x_{n-1})\,d\theta,
\end{equation*}
where $\| \theta x_n + (1-\theta) x_{n-1} \|_{\Lambda_a} \leq \theta \| x_n \|_{\Lambda_a} + (1-\theta) \| x_{n-1} \|_{\Lambda_a} \leq \delta' \leq \delta_0$.
By construction, we have
\begin{align*}
x_{n+1}- x_n = \ & D\phi(0)^{-1} \big( y- \phi(x_n) \big) \\
= \ & D\phi(0)^{-1} \big( y-\phi(x_{n-1}) + \phi(x_{n-1})- \phi(x_n) \big) \\
= \ & D\phi(0)^{-1} \big(\int^1_0 (D\phi(0) - D\phi(\theta x_n +(1-\theta) x_{n-1})\,)\,(x_n-x_{n-1}) \, d\theta \big).
\end{align*}
Using \eqref{eqInvThmAss1} and \eqref{eqInvThmAss2}, we deduce that
\begin{align*}
\| x_{n+1} - x_n \|_{\Lambda_a}
\leq M_0 M_1 \delta' \| x_n - x_{n-1} \|_{\Lambda_a}
\leq (M_0 M_1 \delta')^n M_0 \delta.
\end{align*}
Moreover,
\begin{align*}
\| x_{n+1} \|_{\Lambda_a}
\leq & \ \| x_{n+1} - x_n \|_{\Lambda_a} + \| x_n \|_{\Lambda_a} \\
\leq & \ (M_0 M_1 \delta')^n M_0 \delta + \frac{1-(M_0 M_1 \delta')^n}{1- M_0 M_1 \delta'} M_0 \delta
=  \frac{1-(M_0 M_1 \delta)^{n+1}}{1- M_0 M_1 \delta} M_0 \delta',
\end{align*}
and thus the assertion is true for $n+1$.

As a consequence of \eqref{eqInductionInvFct}, the sequence $(x_n)_{n \in \mathbb{N}}$ is a Cauchy sequence and thus possesses a limit, say $x$, such that $\| x \|_{\Lambda_a} \leq \delta'$. Passing to the limit in \eqref{eqSequenceInvFct}, we obtain that $\phi(x)= y$.

\emph{Step 2}: Uniqueness of the solution to \eqref{eqInvThmEquation}.\\
Let $x' \in B_{\Lambda_a}(\delta')$ be such that $\phi(x')= y$.
Since 
\begin{equation*}
0= \phi(x')-\phi(x)= \int_0^1 D \phi(\theta x' + (1-\theta) x)(x'-x) \dd \theta,
\end{equation*}
we have
\begin{equation*}
x'-x= D\phi(0)^{-1} \Big( \int_0^1 (D\phi(0)-D \phi(\theta x' + (1-\theta) x))(x'-x) \dd \theta \Big).
\end{equation*}
Therefore, by \eqref{eqInvThmAss1} and \eqref{eqInvThmAss2},
$\| x'-x \|_{\Lambda_a} \leq M_0 M_1 \delta \| x'-x \|_{\Lambda_a}$.
Since $M_0 M_1 \delta' < 1$, we deduce that $x= x'$.

\emph{Step 3}: Lipschitz-continuity of the mapping $\mathcal{X}$.\\
Let $y$ and $y' \in B_{\Upsilon_a}(\delta)$, and let $x$ and $x' \in B_{\Lambda_a}(\delta')$ be such that $\phi(x)=y$ and $\phi(x')= y'$. Since
\begin{equation*}
y'-y= \phi(x')-\phi(x)= \int^1_0 D\phi(\theta x' + (1-\theta) x)(x'-x)\,d\theta,
\end{equation*}
we have
\begin{equation*}
x'-x= D\phi(0)^{-1} \big( (y'-y) + (D\phi(0)- \int^1_0 D\phi(\theta x' + (1-\theta) x)(x'-x)\,d\theta\, ) \, \big).
\end{equation*}
Using \eqref{eqInvThmAss1} and \eqref{eqInvThmAss2}, we obtain that
\begin{equation} \label{eqInvThm1}
\| x'-x \|_{\Lambda_a} \leq M_0 \| y'-y \|_{\Upsilon_a} + M_0 M_1 \delta' \| x'-x \|_{\Lambda_a}
\end{equation}
and finally that
\begin{equation} \label{eqInvThm2}
\| x'-x \|_{\Lambda_a} \leq \frac{M_0}{1-M_0 M_1 \delta'} \| y'-y \|_{\Upsilon_a}.
\end{equation}
Estimates \eqref{eqInvThm1} and \eqref{eqInvThm2} are both true when using the norms $\| \cdot \|_{\Lambda_b}$ and $\| \cdot \|_{\Upsilon_b}$, and thus \eqref{eqInvThmLipschitz} is proved.

\end{proof}

\section{Technical comments}


\begin{proof}[Complement of proof, Proposition \ref{prop:estimate_one_step}]
We justify here that $\Phi_2$, considered from $\Lambda_{T,-\lambda}$ to $\Upsilon_{T,-\lambda}$ is differentiable, with a Lipschitz-continuous derivative on bounded subsets of $W(0,T) \times L^2(0,T) \times W(0,T)$.
To this purpose, we focus on the following mapping:
\begin{equation*}
\varphi \colon (y,u) \in W_{-\lambda}(0,T) \times L_{-\lambda}^2(0,T)
\mapsto Nyu \in L_{-\lambda}^2(0,T;V^*).
\end{equation*}
Note first that
\begin{align*}
\| \varphi(y,u) \|_{L_{-\lambda}^2(0,T;V^*)}
= \ & \| e^{-\lambda \cdot} Nyu \|_{L^2(0,T;V^*)} \\
\leq \ & e^{\lambda T} \| e^{-\lambda \cdot} Ny \|_{L^\infty(0,T;V^*)} \| e^{-\lambda \cdot} u \|_{L^2(0,T)} \\
\leq \ & M e^{\lambda T} \| y \|_{W_{-\lambda}(0,T)} \| u \|_{L^2_{-\lambda}(0,T)}.
\end{align*}
Furthermore, we have
\begin{align*}
\varphi(y_2,u_2)-\varphi(y_1,u_1)
= N(y_2-y_1)u_1 + Ny_1(u_2-u_1) + N(y_2-y_1)(u_2-u_1).
\end{align*}
It follows that
\begin{align*}
\| N(y_2-y_1)u_1 \|_{L_{-\lambda}^2(0,T;V^*)}
\leq \ & e^{\lambda T} \| N \|_{\mathcal{L}(Y;V^*)} \| y_2-y_1 \|_{W_{-\lambda}(0,T)} \| u_1 \|_{L_{-\lambda}^2(0,T)}, \\
\| Ny_1(u_2-u_1) \|_{L_{-\lambda}^2(0,T;V^*)}
\leq \ & e^{\lambda T} \| N \|_{\mathcal{L}(Y;V^*)} \| y_1 \|_{W_{-\lambda}(0,T)} \| u_2-u_1 \|_{L_{-\lambda}^2(0,T)}
\end{align*}
and
\begin{align*}
\| N(y_2-y_1)(u_2-u_1) \|_{L_{-\lambda}^2(0,T;V^*)}
\leq \ & e^{\lambda T} \| N \|_{\mathcal{L}(Y;V^*)} \| y_2-y_1 \|_{W_{-\lambda}(0,T)} \| u_2 - u_1 \|_{L_{-\lambda}^2(0,T)} \\
\leq \ & \frac{1}{2} e^{\lambda T} \| N \|_{\mathcal{L}(Y;V^*)} \big( \| y_2-y_1 \|_{W_{-\lambda}(0,T)}^2 + \| u_2 - u_1 \|_{L_{-\lambda}^2(0,T)}^2 \big).
\end{align*}
This justifies that $\varphi$ is differentiable, with
$D\varphi(y,u)(z,v)= Nyv + Nuz$.
Finally, we have
\begin{align*}
& \| D\varphi(y_2,u_2)(z,v)-D\varphi(y_1,u_1)(z,v) \|_{L_{-\lambda}^2(0,T;V^*)} \\
& \qquad \leq \| N(y_2-y_1)(e^{-\lambda \cdot} v) \|_{L^2(0,T;V^*)} +
\| N(e^{-\lambda \cdot} z)(u_2-u_1) \|_{L^2(0,T;V^*)} \\
& \qquad \leq  M \big( \| y_2- y_1 \|_{W(0,T)} + \| u_2- u_1 \|_{L^2(0,T)} \big) \big( \| z \|_{W_{-\lambda}(0,T)} + \| v \|_{L_{-\lambda}^2(0,T)} \big),
\end{align*}
thus,
\begin{align*}
& \| D\varphi(y_2,u_2)-D\varphi(y_1,u_1) \|_{\mathcal{L}(W_{-\lambda}(0,T) \times L_{-\lambda}^2(0,T);L_{-\lambda}^2(0,T;V^*))} \\
& \qquad \leq M \big( \| y_2- y_1 \|_{W(0,T)} + \| u_2- u_1 \|_{L^2(0,T)} \big).
\end{align*}
as was to be proved. We emphasize that the constant $M$ in the above inequality does not depend on $T$. The other terms can be treated similarly, in order to prove \eqref{eq_strange_lip_cont}.
\end{proof}

\begin{remark} \label{remark:why_new_ift}
We can observe that the mapping
\begin{equation*}
(y,u) \in W_{-\lambda}(0,T) \times L_{-\lambda}^2(0,T)
\mapsto D \varphi \in \mathcal{L}(W_{-\lambda}(0,T) \times L_{-\lambda}^2(0,T);L_{-\lambda}^2(0,T;V^*))
\end{equation*}
is globally Lipschitz-continuous:
\begin{align*}
& \| D\varphi(y_2,u_2)-D\varphi(y_1,u_1) \|_{\mathcal{L}(W_{-\lambda}(0,T) \times L_{-\lambda}^2(0,T);L_{-\lambda}^2(0,T;V^*))} \\
& \qquad \leq M e^{\lambda T} \big( \| y_2- y_1 \|_{W_{-\lambda}(0,T)} + \| u_2- u_1 \|_{L_{-\lambda}^2(0,T)} \big).
\end{align*}
The modulus, however, grows with $T$. This is the reason why the implicit function theorem cannot be applied in a direct way in Proposition \ref{prop:estimate_one_step} with $\Phi_2$ defined from $\Lambda_{T,-\lambda}$ to $\Upsilon_{T,\lambda}$. The formulation of the implicit theorem that we suggest allows to overcome this difficulty and should also be useful when investigating the RHC algorithm for  other stabilization problems.
\end{remark}

\bibliographystyle{plain}

\begin{thebibliography}{10}

\bibitem{ABQRW}
F.~Allg\"ower, T.~Badgwell, J.~Rawlings J.~Qin, and S.~Wright.
\newblock Nonlinear predictive control and moving horizon estimation -- an
  introductory overview.
\newblock {\em Advances in Control}, P.~Frank Ed.:391--449, 1999.

\bibitem{AZ}
F.~Allg\"ower and A.~Zheng, editors.
\newblock {\em Nonlinear model predictive control}, volume~26 of {\em Progress
  in Systems and Control Theory}.
\newblock Birkh\"auser Verlag, Basel, 2000.
\newblock Papers from the workshop held in Ascona, June 2--6, 1998.

\bibitem{AK2}
B.~Azmi and K.~Kunisch.
\newblock On the stabilizability of the {B}urgers equation by receding horizon
  control.
\newblock {\em SIAM J. Control Optim.}, 54(3):1378--1405, 2016.

\bibitem{AK1}
B.~Azmi and K.~Kunisch.
\newblock Receding horizon control for the stabilization of the wave equation.
\newblock {\em Discrete Contin. Dyn. Syst.}, 38(2):449--484, 2018.

\bibitem{BT11}
M.~Badra and T.~Takahashi.
\newblock Stabilization of parabolic nonlinear systems with finite dimensional
  feedback or dynamical controllers: Application to the navier-stokes system.
\newblock {\em SIAM J. Control Optim.}, 49(2):420--463, 2011.

\bibitem{BreKP17b}
T.~Breiten, K.~Kunisch, and L.~Pfeiffer.
\newblock Taylor expansions for the {HJB} equation associated with a bilinear
  control problem.
\newblock {\em Annales de l'Institut Henri Poincar\'{e} C, Analyse Non
  Lin\'{e}aire}.
\newblock In Press.

\bibitem{BreKP17a}
T.~Breiten, K.~Kunisch, and L.~Pfeiffer.
\newblock Control strategies for the {F}okker-{P}lanck equation.
\newblock {\em ESAIM Control Optim. Calc. Var.}, 24(2):741--763, 2018.

\bibitem{BreKP18a}
T.~Breiten, K.~Kunisch, and L.~Pfeiffer.
\newblock Infinite-horizon bilinear optimal control problems: Sensitivity
  analysis and polynomial feedback laws.
\newblock {\em SIAM J. Control Optim.}, 56(5):3184--3214, 2018.

\bibitem{BreKP17c}
T.~Breiten, K.~Kunisch, and L.~Pfeiffer.
\newblock Numerical study of polynomial feedback laws for a bilinear control
  problem.
\newblock {\em Math. Control Relat. Fields}, 8(3{\&}4):557--582, 2018.

\bibitem{BreP18}
T.~Breiten and L.~Pfeiffer.
\newblock On the turnpike property and the receding-horizon method for
  linear-quadratic optimal control problems.
\newblock {\em ArXiv preprint}, 2018.

\bibitem{CurZ95}
R.~F. Curtain and H.~J. Zwart.
\newblock {\em An Introduction to Infinite-Dimensional Linear Systems Theory}.
\newblock Springer-Verlag, 2005.

\bibitem{FK}
R.~A. Freeman and P.~V. Kokotovi\'c.
\newblock {\em Robust nonlinear control design}.
\newblock Systems \& Control: Foundations \& Applications. Birkh\"auser Boston,
  Inc., Boston, MA, 1996.
\newblock State-space and Lyapunov techniques.

\bibitem{GMTT}
G.~Grimm, M.~J. Messina, S.~E. Tuna, and A.~R. Teel.
\newblock Model predictive control: for want of a local control {L}yapunov
  function, all is not lost.
\newblock {\em IEEE Trans. Automat. Control}, 50(5):546--558, 2005.

\bibitem{G}
L.~Gr\"une.
\newblock Analysis and design of unconstrained nonlinear {MPC} schemes for
  finite and infinite dimensional systems.
\newblock {\em SIAM J. Control Optim.}, 48(2):1206--1228, 2009.

\bibitem{GP}
L.~Gr\"une and J.~Pannek.
\newblock {\em Nonlinear model predictive control}.
\newblock Communications and Control Engineering Series. Springer, London,
  2011.
\newblock Theory and algorithms.

\bibitem{GR}
L.~Gr\"une and A.~Rantzer.
\newblock On the infinite horizon performance of receding horizon controllers.
\newblock {\em IEEE Trans. Automat. Control}, 53(9):2100--2111, 2008.

\bibitem{GSS18}
L.~Gr{\"u}ne, M.~Schaller, and A.~Schiela.
\newblock Sensitivity analysis of optimal control for a class of parabolic pdes
  motivated by model predictive control.
\newblock {\em Department of Mathematics, University of Bayreuth}, 2018.

\bibitem{HarST13}
C.~Hartmann, B.~Sch\"afer-Bund, and A.~Th\"ons-Zueva.
\newblock Balanced averaging of bilinear systems with applications to
  stochastic control.
\newblock {\em SIAM Journal on Control and Optimization}, 51(3):2356--2378,
  2013.

\bibitem{IK1}
K.~Ito and K.~Kunisch.
\newblock Asymptotic properties of receding horizon optimal control problems.
\newblock {\em SIAM J. Control Optim.}, 40(5):1585--1610, 2002.

\bibitem{JH}
A.~Jadbabaie and J.~Hauser.
\newblock On the stability of receding horizon control with a general terminal
  cost.
\newblock {\em IEEE Trans. Automat. Control}, 50(5):674--678, 2005.

\bibitem{JYH}
A.~Jadbabaie, Jie Y., and J.~Hauser.
\newblock Unconstrained receding-horizon control of nonlinear systems.
\newblock {\em IEEE Trans. Automat. Control}, 46(5):776--783, 2001.

\bibitem{KG}
S.~S. Keerthi and E.~G. Gilbert.
\newblock Optimal infinite-horizon feedback laws for a general class of
  constrained discrete-time systems: stability and moving-horizon
  approximations.
\newblock {\em J. Optim. Theory Appl.}, 57:265--293, 1988.

\bibitem{K}
D.~L. Kleinman.
\newblock An easy way to stabilize a linear constant system.
\newblock {\em IEEE Trans. Automat. Contr.}, 15:692--712, 1970.

\bibitem{Kre16}
A.~J. {Krener}.
\newblock {Adaptive Horizon Model Predictive Control}.
\newblock {\em ArXiv e-prints}, February 2016.

\bibitem{KreAH13}
A.~J. Krener, C.~O. Aguilar, and T.~W. Hunt.
\newblock Mathematical system theory -- series solutions of {HJB} equations.
  {F}estschrift in honor of {U}we {H}elmke on the occasion of his sixtieth
  birthday.
\newblock pages 247--260. CreateSpace, 2013.

\bibitem{MM}
D.~Q. Mayne and H.~Michalska.
\newblock Receding horizon control of nonlinear systems.
\newblock {\em IEEE Trans. Autom. Contr.}, 35:814--824, 1090.

\bibitem{MRRS}
D.~Q. Mayne, J.~B. Rawlings, C.~V. Rao, and P.~O.~M. Scokaert.
\newblock Constrained model predictive control: stability and optimality.
\newblock {\em Automatica J. IFAC}, 36(6):789--814, 2000.

\bibitem{PND}
J.~A. Primbs, V.~Nevisti\'c, and J.~C. Doyle.
\newblock A receding horizon generalization of pointwise min-norm controllers.
\newblock {\em IEEE Trans. Automat. Control}, 45(5):898--909, 2000.

\bibitem{Ray18}
J.-P. Raymond.
\newblock Stabilizability of infinite-dimensional systems by finite-dimensional
  controls.
\newblock {\em Comput. Methods Appl. Math.}, 2018.
\newblock Published online.

\bibitem{RA}
M.~Reble and F.~Allg\"ower.
\newblock Unconstrained model predictive control and suboptimality estimates
  for nonlinear continuous-time systems.
\newblock {\em Automatica J. IFAC}, 48(8):1812--1817, 2012.

\bibitem{SMR}
P.~O.~M. Scokaert, D.~Q. Mayne, and J.~B. Rawlings.
\newblock Suboptimal model predictive control (feasibility implies stability).
\newblock {\em IEEE Trans. Automat. Control}, 44(3):648--654, 1999.

\bibitem{WanX16}
G.~Wang and Y.~Xu.
\newblock {\em Periodic feedback stabilization for linear periodic evolution
  equations}.
\newblock SpringerBriefs in Mathematics. Springer, Cham; BCAM Basque Center for
  Applied Mathematics, Bilbao, 2016.

\end{thebibliography}

\end{document}